\documentclass{amsart}

\usepackage{amsmath,amsthm,amssymb,bm}
\usepackage{hyperref}
\usepackage{a4wide}
\usepackage{cleveref}
\usepackage{graphicx,color}
\usepackage{ytableau}
\usepackage{tikz}
\usepackage{tikz-cd}
\usepackage{subcaption}
\numberwithin{equation}{section}

\newtheorem{thm}{Theorem}[section]
\newtheorem{lem}[thm]{Lemma}
\newtheorem{prop}[thm]{Proposition}
\newtheorem{cor}[thm]{Corollary}
\theoremstyle{definition}
\newtheorem{exam}[thm]{Example}
\newtheorem{defn}[thm]{Definition}
\newtheorem{conj}[thm]{Conjecture}

\newtheorem{remark}[thm]{Remark}

\crefname{lem}{Lemma}{Lemmas}
\crefname{thm}{Theorem}{Theorems}
\crefname{prop}{Proposition}{Propositions}
\crefname{question}{Question}{Questions}
\crefname{defn}{Definition}{Definitions}
\crefname{conj}{Conjecture}{Conjectures}
\crefname{figure}{Figure}{Figures}
\crefname{cor}{Corollary}{Corollaries} 
\crefformat{equation}{(#2#1#3)}
\Crefformat{equation}{Equation #2(#1)#3}

\AddToHook{env/lem/begin}{\crefalias{thm}{lem}}
\AddToHook{env/prop/begin}{\crefalias{thm}{prop}}
\AddToHook{env/cor/begin}{\crefalias{thm}{cor}}
\AddToHook{env/exam/begin}{\crefalias{thm}{exam}}
\AddToHook{env/defn/begin}{\crefalias{thm}{defn}}
\AddToHook{env/conj/begin}{\crefalias{thm}{conj}}
\AddToHook{env/question/begin}{\crefalias{thm}{question}}
\AddToHook{env/problem/begin}{\crefalias{thm}{problem}}
\AddToHook{env/remark/begin}{\crefalias{thm}{remark}}

\newcommand\LHT{\operatorname{LHT}}
\newcommand\RST{\operatorname{RST}}

\newcommand\Par{\operatorname{Par}}

\newcommand\QQ{\mathbb{Q}}

\newcommand\LL{\mathcal{L}}

\DeclareMathOperator{\wt}{wt}

\newcommand\x{\vec x}

\def\multinom#1#2{\ensuremath{\left(\kern-.4em\left(\genfrac{}{}{0pt}{}{#1}{#2}\right)\kern-.4em\right)}}

\renewcommand\vec[1]{\mathbf{#1}}
\newcommand\norm[1]{\lVert #1 \rVert}

\newcommand\qHyper[5]{{}_{#1}\phi_{#2} \left(
    \begin{matrix}
      #3\\
      #4\\
    \end{matrix}
    ; #5
    \right)}

\title{An explicit formula for Koornwinder moments and Rains' positivity conjecture}

\author{Younggwang Cho}
\address{Department of Mathematics, Sungkyunkwan University, Suwon, South Korea}
\email{brglory@g.skku.edu}

\author{Donghyun Kim}
\address{Department of Mathematics, Ewha Womans University, Seoul, South Korea}
\email{kdh310@ewha.ac.kr}

\author{Jang Soo Kim}
\address{Department of Mathematics, Sungkyunkwan University, Suwon, South Korea}
\email{jangsookim@skku.edu}

\author{Hojoon Lee}
\address{Department of Mathematics, Sungkyunkwan University, Suwon, South Korea}
\email{hojoon1101@g.skku.edu}

\author{Jing Liu}
\address{Research Center for Mathematics and Interdisciplinary Sciences,
Shandong University, Qingdao, P.R. China}
\email{lsweet@mail.sdu.edu.cn}

\author{Minho Song}
\address{Department of Mathematics, Yonsei University, Seoul, South Korea}
\email{minhosong@yonsei.ac.kr}

\thanks{D.~H.~Kim was supported by the Ewha Womans University Research Grant of 2026.
J.~S.~Kim was supported by the National Research Foundation of Korea (NRF) grant funded by the Korea government (RS-2025-00557835).
J.~Liu was supported by the China Scholarship Council (No.~202506220083) during her visit to Sungkyunkwan University.
M.~Song was supported by the National Research Foundation of Korea (NRF) grant funded by the Korea government (MSIT) (No.~2022R1C1C2009025).}

\keywords{Askey--Wilson polynomial, Koornwinder moment, asymmetric simple exclusion process,
 Rains' positivity conjecture, non-intersecting lattice paths}
\subjclass[2020]{Primary: 05A15; Secondary: 05A19, 33D45, 82C22}

\begin{document}

\begin{abstract}
The asymmetric simple exclusion process (ASEP) is an important particle model with deep connections to orthogonal polynomials. Motivated by this connection, Corteel and Williams introduced the Koornwinder moments $M^{Z}_{\lambda}$ at $ t=q $, which generalize the moments of Askey--Wilson polynomials. They showed that the partition function of the two-species ASEP is equal to $M^{Z}_{\lambda}$ for a one-row partition $ \lambda $.

In this paper, we investigate a conjecture of Rains on the positivity of the minimal numerator of the Koornwinder moment $M^{Z}_{\lambda}$. We derive the first explicit formula for this moment, thereby obtaining a precise formulation of the conjecture by determining the minimal denominator of $M^{Z}_{\lambda}$. We also propose a generalization of the conjecture for the more general Koornwinder moments $M^{Z}_{\lambda,\mu}$ indexed by two partitions at special parameter values.

We prove the generalized Rains' conjecture in two special cases: $(\xi,q)=(1,0)$ and $(\xi,q)=(1,1)$. For $(\xi,q)=(1,0)$, we construct a lattice path model and obtain a combinatorial formula for $M^{Z}_{\lambda,\mu}$ in terms of non-intersecting lattice paths. For $(\xi,q)=(1,1)$, we establish an explicit product formula for $M^{Z}_{\lambda,\mu}$ and give a combinatorial interpretation using lecture hall tableaux.
\end{abstract}

\maketitle


\section{Introduction}

\subsection{ASEP and orthogonal polynomials}
The asymmetric simple exclusion process (ASEP) is an important model in statistical mechanics that describes a system of particles on a lattice hopping to the left and right. This process was introduced independently around 1970 in both biological and mathematical contexts \cite{MacdonaldGibbsPipkin1968, Spitzer1970}. Since then, the model has been studied extensively across various fields for a number of reasons. First, ASEP exhibits rich phenomenology and has numerous applications, including protein synthesis, traffic flow, shock formation, surface growth, and sequence alignment (see \cite{ChouMallickZia2011} for a summary). Second, the study of ASEP admits a wide range of mathematical approaches, such as the Bethe Ansatz \cite{GwaSpohn1992}, quadratic algebras \cite{EsslerRittenberg1996}, combinatorics \cite{CMW2018, FM07, KW23}, orthogonal polynomials \cite{CMW, Corteel2007a, Corteel2019, Corteel2011}, and random matrices \cite{Johansson2000}.

One of the most remarkable aspects of ASEP (on a line) is its connection to orthogonal polynomials. Orthogonal polynomials attracted the attention of combinatorialists when mathematicians such as Flajolet \cite{Flajolet1980}, Viennot \cite{ViennotOP}, and Foata \cite{Foata1978} initiated a combinatorial study of these polynomials. More precisely, the moments of many well-known orthogonal polynomials exhibit rich combinatorial structures, and there has been extensive work uncovering the combinatorics of these moments for orthogonal polynomials in the Askey scheme \cite{dSW1995, ISV1987, KSZ2011}.

In particular, the stationary distribution of ASEP is closely related to the theory of Askey--Wilson polynomials, which lie at the top of the hierarchy of basic hypergeometric orthogonal polynomials. This connection was first revealed by Sasamoto \cite{Sasamoto1999} and subsequently by Uchiyama, Sasamoto, and Wadati \cite{USW2004}, where the partition function of ASEP was linked to moments of the Askey--Wilson polynomials. Such an unexpected discovery sparked an explosion of research, which eventually led to explicit combinatorial formulas for the stationary distribution of ASEP. Corteel and Williams \cite{Corteel2011} introduced staircase tableaux and provided a combinatorial formula for the stationary distribution of ASEP in terms of these tableaux, thereby linking ASEP with the rich combinatorics underlying Askey--Wilson polynomials.

The Askey--Wilson polynomial is a special case of the Koornwinder polynomial. The Koornwinder polynomial is a multivariate orthogonal polynomial, often called the type \(BC\) Macdonald polynomial \cite{Koornwinder1992}. Since the Askey--Wilson polynomial is connected with ASEP, it is natural to ask whether there exists a general version of ASEP that is related to Koornwinder polynomials. Cantini \cite{Cantini2017} answered this question by showing that the partition function of the multi-species ASEP is given as a specialization of Koornwinder polynomials. However, it remains a wide-open problem to reveal a combinatorial structure of the stationary distribution of the multi-species ASEP (and thereby a combinatorial structure of Koornwinder polynomials). The best progress in this direction is a formula given by Corteel, Mandelshtam, and Williams \cite{CMW} for the two-species ASEP in terms of rhombic staircase tableaux.

Corteel and Williams \cite{Corteel2019} introduced the Koornwinder moments \(M^{Z}_{\lambda} = M^{Z}_{\lambda}(\xi; \alpha, \beta, \gamma, \delta; q)\) as the multivariate moments of rescaled Askey--Wilson polynomials, which can be viewed as a specialization of Koornwinder polynomials. They also showed that the Koornwinder moment for a one-row partition is the partition function for the two-species ASEP. We caution the reader that the Koornwinder moment, in general, is not the partition function for the multi-species ASEP, since it arises from a different specialization than the one considered by Cantini \cite{Cantini2017}. These two specializations coincide only for \(M^{Z}_{\lambda}\) when \(\lambda\) is a one-row partition, which is connected to the two-species ASEP. It is also worth mentioning that Corteel et al. \cite{Corteel2024} further constructed Koornwinder polynomials for one-column partitions, building upon the combinatorics of rhombic staircase tableaux.

Throughout this paper, we follow the corrected version \cite{Corteel2019} of the
published paper \cite{Corteel2018} by Corteel and Williams.

\subsection{Main Results}

In this paper, we investigate Rains' conjecture \cite[Conjecture
4.4]{Corteel2019}, which states that the minimal numerator of the Koornwinder
moment \(M^{Z}_{\lambda}\) is a polynomial in
\( \xi, \alpha, \beta, \gamma, \delta, q \) with nonnegative integer
coefficients. This conjecture was verified in a special case
when \(\lambda\) is a one-row partition, where a combinatorial formula
has been established in terms of rhombic staircase tableaux
\cite{CMW}. Our main results are as follows:
\begin{enumerate}
\item A precise formulation of Rains' conjecture, giving an
  explicit formula for the minimal denominator of
  \( M^{Z}_{\lambda} \) (\Cref{prop:den}). We also generalize Rains'
  conjecture to more general Koornwinder moments
  \( M^{Z}_{\lambda, \mu} \) when \( q = 1 \),
  or when one of \( \alpha,\beta,\gamma,\delta,q \) is zero (\Cref{con:gen_rains}).
\item The first explicit formula for the Koornwinder moment \(M^{Z}_{\lambda}\) (\Cref{cor:explicit_Koornwinder_moment}). 
\item A proof of the generalized Rains' conjecture for two special
  cases: \((\xi,q)=(1,0)\) and \((\xi,q)=(1,1)\).
  For the case
  \((\xi,q)=(1,0)\), we construct a weighted lattice model, thereby
  giving an explicit combinatorial formula for the Koornwinder moment
  \( M^{Z}_{\lambda,\mu} \) in terms of non-intersecting lattice paths.
  For the case \((\xi,q)=(1,1)\), we present an explicit product
  formula for \( M^{Z}_{\lambda,\mu} \), and provide a combinatorial model
  using lecture hall tableaux.
\end{enumerate}

The Koornwinder moment is defined as the determinant of a matrix whose
entries are (rescaled) Askey--Wilson mixed moments. The mixed moments
and coefficients of Askey--Wilson polynomials can be realized, up to
sign, as special cases of Koornwinder moments corresponding to one-row
and one-column partitions, respectively. While mixed moments for various
orthogonal polynomials have been extensively studied, the coefficients
of orthogonal polynomials have received much less attention.
Combinatorial formulas for coefficients of various orthogonal
polynomials will be presented in a sequel paper.

\subsection{Organization}
This paper is organized as follows. In \Cref{sec: pre}, we introduce the necessary background on orthogonal polynomials and the two-species ASEP. In \Cref{sec:askey-wilson-moments}, we introduce rescaled Askey--Wilson polynomials, which are the main objects of study in this paper. We then discuss previous results connecting the (mixed) moments of rescaled Askey--Wilson polynomials to the (two-species) ASEP. We also describe the three-term recurrence coefficients of the rescaled Askey--Wilson polynomials in terms of the ASEP parameters $\alpha,\beta,\gamma,\delta$. In \Cref{sec:koornw-moments}, we introduce Koornwinder moments and Rains' conjecture, and prove the first explicit formula for the Koornwinder moments. As a byproduct, we describe the minimal denominator for the Koornwinder moment, thereby stating Rains' conjecture in a complete form. In \Cref{sec:proof-special-cases}, we establish positivity results for certain numerators of the Koornwinder moments in two special cases. Finally, in \Cref{sec:minimal-denominators}, we use these results to prove the generalized Rains' conjecture in these cases and complete the argument for the minimal denominator.

\section{Preliminaries}\label{sec: pre}

This section introduces the necessary definitions and preliminary results on partitions, orthogonal polynomials, and the two-species ASEP.

\subsection{Basic definitions}
\label{subsec:basic_def}
A \emph{partition} is a weakly decreasing sequence
\(\lambda = (\lambda_1, \dots, \lambda_n) \) of positive integers.
Each entry \(\lambda_i\) is called a \emph{part} of \(\lambda\), and
the number of parts is the \emph{length} of \(\lambda\), denoted by
\(\ell(\lambda)\). We use the convention \( \lambda_i=0 \) for
\( i>\ell(\lambda) \). Let \(\Par_n\) denote the set of partitions
with at most \(n\) parts. When we write \(\lambda \in \Par_n\), we
regard \(\lambda\) as a sequence
\( (\lambda_1,\dots,\lambda_n) \) of \( n \) integers by appending
\(n - \ell(\lambda)\) trailing zeros. We also use the notation \((a^b)\)
for the sequence consisting of \(b\) copies of \(a\).
For example, \( (4,3^3,0^2)=(4,3,3,3,0,0)\in \Par_6 \).

The \emph{Young diagram} of a partition \(\lambda\) is
the set
\[
\{ (i, j) \in \mathbb{Z}^2 : 1 \le i \le \ell(\lambda), \; 1 \le j \le \lambda_i \}.
\]
Each element \( (i,j) \) in the Young diagram of \( \lambda \) is
called a \emph{cell}. We will identify a partition with its Young
diagram. The \emph{conjugate} of \(\lambda\), denoted by \(\lambda'\),
is the partition whose Young diagram is
\[
  \{ (j, i) \in \mathbb{Z}^2 : 1 \le i \le \ell(\lambda), \; 1 \le j
  \le \lambda_i \}.
\]
For a cell \((i,j)\) in a partition \(\lambda\), the \emph{hook
  length} \(h(i,j)\) and the \emph{content} \(c(i,j)\) are defined by
\[
h(i,j) = \lambda_i + \lambda'_j - i - j + 1, \qquad c(i,j) = j - i.
\]

For two partitions \(\lambda\) and \(\mu\), we write
\(\mu \subseteq \lambda\) when the Young diagram of \(\mu\) is
contained in that of \(\lambda\). In this case, a \emph{skew
  partition} \(\lambda / \mu\) is given by the set difference
\( \lambda-\mu \) of the Young diagrams. The number of cells in
\(\lambda / \mu\) is denoted by \(|\lambda / \mu|\).

We will use the $q$-Pochhammer symbols:
\[
    (a;q)_n := \prod_{k=0}^{n-1}(1-aq^k), \qquad (a_1, \dots, a_m; q)_{n} := (a_1; q)_{n} \cdots (a_m; q)_{n},
\]
where \( n \) may be \( \infty \), in which case the product is over all \( k\ge0 \).

\subsection{Orthogonal polynomials}

In this subsection, we define orthogonal polynomials and their mixed
moments and review their basic properties. For later use, we state the
connection between the mixed moments of orthogonal polynomials and
their rescaled versions.

\begin{defn}
  Let \(\LL\) be a linear functional defined on the space of polynomials
  in \(x\). A sequence of polynomials \(\{P_n(x)\}_{n \ge 0}\) is called an
  \emph{orthogonal polynomial sequence} (OPS) with respect to \(\LL\) if
  the following conditions hold:
\begin{itemize}
    \item \(\deg P_n(x) = n\) for all \(n \ge 0\);
    \item 
    \(
    \LL\bigl(P_n(x) P_m(x)\bigr) = K_n \delta_{n,m},
    \)
    where \(K_n \ne 0\) and \( \delta_{n,m} \) equals $1$  if \( n=m \), and \( 0 \) otherwise.
\end{itemize}
\end{defn}

Note that if \( \{P_n(x)\}_{n\ge0} \) is an OPS with respect to a
linear functional \( \LL \), then any sequence
\( \{ c_n P_n(x)\}_{n\ge0} \) with \( c_n\ne 0 \) is also an OPS with
respect to \( \LL \). Hence, for any OPS with respect to \( \LL \),
there is a monic OPS with respect to \( \LL \). Moreover, since
\( \{P_n(x)\}_{n\ge0} \) is also an OPS with respect to \( c\LL \) for
any \( c\ne0 \), we may assume that \( \LL(1)=1 \).

It is well known from Favard's theorem that a sequence of monic
polynomials \(\{P_n(x)\}_{n \ge 0}\) forms an OPS if and only if it
satisfies the three-term recurrence
\begin{equation}\label{eq:rec}
P_{n+1}(x) = (x - b_n)P_n(x) - \lambda_n P_{n-1}(x), \quad n \ge 0,
\end{equation}
with initial conditions \(P_{-1}(x)=0\) and \(P_0(x)=1\), where
\(\{b_n\}_{n \ge 0}\) and \(\{\lambda_n\}_{n \ge 1}\) are sequences
with \(\lambda_n \neq 0\). 

\begin{defn}\label{def:mixed_moment}
  Let \( \{P_n(x)\}_{n\ge0} \) be an OPS (not necessarily monic) with
  respect to \( \LL \). The \emph{moments} \(\sigma_{n}\) and the
  \emph{mixed moments} \(\sigma_{n,k}\) of this OPS are defined by
\[
 \sigma_{n} = \frac{\LL(x^n)}{\LL(P_0(x)^2)}, \qquad 
 \sigma_{n,k}=\frac{\LL\bigl(x^n P_k(x)\bigr)}{\LL\bigl(P_k(x)^2\bigr)}.
\]
\end{defn}

For consistency with mixed moments, we include the normalizing factor
in the definition of the moment \( \sigma_n \). Note that if
\( P_0(x)=1 \) and \( \LL(1) =1 \), then \( \sigma_n = \LL(x^n) \).

By orthogonality, we have
\begin{equation}\label{eq:5}
  x^n = \sum_{k=0}^{n} \sigma_{n,k} P_k(x).
\end{equation}
Since the linear functional \( \LL \) is uniquely determined by the
mixed moments \( \sigma_{n,k} \) up to a scalar multiple, \eqref{eq:5}
allows us to consider the orthogonal polynomials \( P_n(x) \) without
referring to \( \LL \). Viennot \cite{ViennotOP} found a combinatorial
interpretation for \(\sigma_{n,k}\) in terms of Motzkin paths.

The \emph{coefficients} \(\nu_{n,k}\) of \(P_n(x)\) are defined by
\[
P_n(x) = \sum_{k=0}^{n} \nu_{n,k} x^k.
\]
Since \(\{P_n(x)\}_{n \ge 0}\) and \(\{x^n\}_{n \ge 0}\) are bases of
the space of polynomials in \( x \), we have the following duality
between mixed moments and coefficients:
\begin{equation}\label{eq:duality-mixed-moments-coeff}
  \sum_{k \ge 0} \nu_{n,k} \sigma_{k,m} = \delta_{n,m},
  \qquad \sum_{k \ge 0} \sigma_{n,k} \nu_{k,m} = \delta_{n,m}.
\end{equation}

\begin{defn}\label{def:1}
  Let \( \{P_n(x)\}_{n \ge 0} \) be the OPS given by \eqref{eq:rec}.
  We say that \( \{P_n(x)\}_{n \ge 0} \) is the \emph{OPS with
    recurrence coefficients} \( (\{b_m\},\{\lambda_m\}) \), and write
\[
  P_n(x) = P_n(x;\{b_m\},\{\lambda_m\}).
\]
The moments \( \sigma_n \),
mixed moments \( \sigma_{n,k} \),
and coefficients \( \nu_{n,k} \)
of this OPS are written as
\[
  \sigma_n = \sigma_n(\{b_m\},\{\lambda_m\}), \qquad 
  \sigma_{n,k} = \sigma_{n,k}(\{b_m\},\{\lambda_m\}), \qquad 
  \nu_{n,k} = \nu_{n,k}(\{b_m\},\{\lambda_m\}).
\]
\end{defn}

One can easily convert a non-monic OPS to a monic OPS using a standard
technique; see \cite[p.~19]{Chihara} for example. For the reader's
convenience, we provide a precise statement with a proof.

\begin{lem}\label{lem:1}
  Suppose that \(\{p_n(x)\}_{n \ge 0}\) is an OPS satisfying the
  three-term recurrence
\begin{equation}\label{eq:1}
A_n p_{n+1}(x) + B_n p_n(x) + C_n p_{n-1}(x) = t x p_n(x), \qquad n \ge 0,
\end{equation}
with \( p_{-1}(x)=0 \) and \( p_{0}(x)=1 \), where \(t\) is a nonzero
constant and \( \{A_n\} \), \( \{B_n\} \), and \( \{C_n\} \) are
sequences with \(A_nC_n \ne 0\). Then the corresponding monic OPS
\(\{P_n(x)\}_{n \ge 0}\) is given by
\begin{equation}\label{eq:6}
  P_n(x) = \left( \prod_{k=0}^{n-1} A_k \right) t^{-n} p_n(x)
  = P_n(x;\{B_m/t\}, \{A_{m-1}C_m/t^2\}).
\end{equation}
Moreover, both OPSs \( \{p_n(x)\} \) and \( \{P_n(x)\} \) have the
same moments.
\end{lem}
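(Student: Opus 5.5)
Set $c_n = \left(\prod_{k=0}^{n-1} A_k\right) t^{-n}$, with $c_0=1$, and $P_n(x) = c_n p_n(x)$. The plan is to check directly that $P_n$ is monic and satisfies the three-term recurrence \eqref{eq:rec} with $b_n = B_n/t$ and $\lambda_n = A_{n-1}C_n/t^2$. Favard's theorem and Definition~\ref{def:1} then identify $P_n(x)$ with $P_n(x;\{B_m/t\},\{A_{m-1}C_m/t^2\})$.

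For the recurrence, divide \eqref{eq:1} by $t$ and multiply by $c_{n+1} = c_n A_n/t$. The term $A_n p_{n+1}$ becomes $c_{n+1} A_n p_{n+1}/t \cdot (t/A_n) = c_{n+1}p_{n+1} = P_{n+1}$. More concretely, multiplying \eqref{eq:1} by $c_n/t$ gives
\[
\frac{A_n c_n}{t} p_{n+1} + \frac{B_n}{t} c_n p_n + \frac{C_n c_n}{t} p_{n-1} = x\, c_n p_n .
\]
Here $A_n c_n/t = c_{n+1}$, so the first term is $P_{n+1}$. Since $c_n = c_{n-1}A_{n-1}/t$, the third term is $\frac{A_{n-1}C_n}{t^2} P_{n-1}$. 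Rearranging gives
\[
P_{n+1} = (x - B_n/t)P_n - \frac{A_{n-1}C_n}{t^2}P_{n-1}.
\]
The initial conditions $P_{-1}=0$ and $P_0=1$ are inherited directly. Because $A_{n-1}C_n\ne0$, we have $\lambda_n\neq 0$. Monicity then follows by induction from the recurrence, as the leading term of $P_{n+1}$ comes from $xP_n$. This argument does not even need to know a priori that $p_n$ has degree $n$. Favard's theorem then shows that $\{P_n\}$ is an OPS, and since $c_n\ne0$ it is orthogonal with respect to the same functional $\LL$ as $\{p_n\}$. Consequently the monic OPS attached to $\{p_n\}$ is exactly $\{P_n\}$, because the monic OPS for a given $\LL$ is unique.

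For the moments, Definition~\ref{def:mixed_moment} gives $\sigma_n = \LL(x^n)/\LL(P_0(x)^2)$. Both families have the same zeroth polynomial $P_0 = p_0 = 1$ and the same functional $\LL$, up to a scalar multiple that cancels in the ratio, so their moments agree.

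None of these steps is difficult. The only point needing care is the index bookkeeping in $c_{n+1}=c_nA_n/t$ and $c_n = c_{n-1}A_{n-1}/t$, which is where the product $A_{n-1}C_n$ in $\lambda_n$ comes from. A second point is to note that the functional is defined only up to scaling, so the moment normalization by $\LL(P_0^2)$ makes the statement well defined.
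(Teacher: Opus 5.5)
Your proposal is correct and follows essentially the same route as the paper. Like the paper, you rescale by $c_n=\bigl(\prod_{k=0}^{n-1}A_k\bigr)t^{-n}$, multiply the recurrence by $c_n/t$ to obtain the monic recurrence with $b_n=B_n/t$ and $\lambda_n=A_{n-1}C_n/t^2$, and deduce equal moments from $p_0=P_0=1$ and orthogonality with respect to the same functional; your appeal to uniqueness of the monic OPS for a given functional only makes explicit what the paper leaves implicit in ``the corresponding monic OPS.''
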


\begin{proof}
  Let \( P_n(x) = ( \prod_{k=0}^{n-1} A_k ) t^{-n} p_n(x) \).
Multiplying \eqref{eq:1} by \( A_0 \cdots A_{n-1} t^{-n-1} \) yields
\[
  P_{n+1}(x) + \frac{B_n}{t} P_n(x) + \frac{A_{n-1}C_n}{t^2} P_{n-1}(x) = x P_n(x).
\]
Thus \( P_n(x) = P_n(x;\{B_m/t\}, \{A_{m-1}C_m/t^2\}) \), and the
polynomials \( P_n(x) \) are monic, which proves \eqref{eq:6}. The
second statement follows from the fact that \( \{p_n(x)\} \) and
\( \{P_n(x)\} \) are both OPSs with respect to the same linear
functional \( \LL \) and that \( p_0(x) = P_0(x) = 1 \).
\end{proof}

We note that by \Cref{lem:1}, an OPS \( \{p_n(x)\} \) with
\( p_0(x) = 1 \) and the corresponding monic OPS have the same
moments, but their mixed moments may be different.

In later sections, we will use the following lemmas, which show the
connection between the mixed moments of an OPS and its rescaled
version.

\begin{lem}\label{lem:2}
  Let \(\{P_n(x)\}_{n \ge 0}\) be any OPS with respect to a linear
  functional \( \LL \). For a nonzero constant \(t\), define
  \(Q_n(x) = t^{-n} P_n(tx)\) and \( \LL'(f(x)) = \LL(f(x/t)) \) for any
  polynomial \( f(x) \). Then \( \{Q_n(x)\}_{n \ge 0} \) is an OPS
  with respect to \( \LL' \). Moreover, if \(\sigma_{n,k}\) and
  \(\sigma'_{n,k}\) are the mixed moments of \(\{P_n(x)\}_{n \ge 0}\)
  and \(\{Q_n(x)\}_{n\ge 0}\), respectively, then
\[
\sigma'_{n,k} = t^{k-n}\sigma_{n,k}.
\]
In particular, we have \(\sigma'_n =\sigma'_{n,0} = t^{-n}\sigma_n\).
\end{lem}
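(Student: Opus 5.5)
The plan is to verify the statement directly from the definitions, splitting it into two checks: orthogonality of \(\{Q_n(x)\}\), and the transformation rule for the mixed moments.

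\emph{Orthogonality.} First, \(Q_n(x)=t^{-n}P_n(tx)\) is a polynomial of degree exactly \(n\). Its leading coefficient is that of \(P_n\) multiplied by \(t^n\cdot t^{-n}=1\), and this is nonzero since \(t\neq 0\). Next, \(\LL'\) is linear, because \(f\mapsto f(x/t)\) is a linear map on polynomials. For the orthogonality relation, substitute \(x\mapsto x/t\):
\[
\LL'\bigl(Q_n(x)Q_m(x)\bigr)=\LL\bigl(Q_n(x/t)Q_m(x/t)\bigr)=t^{-n-m}\LL\bigl(P_n(x)P_m(x)\bigr)=t^{-2n}K_n\delta_{n,m}.
\]
The constant \(t^{-2n}K_n\) is nonzero, so \(\{Q_n\}\) is an OPS with respect to \(\LL'\).

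\emph{Mixed moments.} Apply the same substitution to the numerator and denominator in Definition~\ref{def:mixed_moment}. For the numerator,
\[
\LL'\bigl(x^nQ_k(x)\bigr)=\LL\bigl(t^{-n}x^n\,t^{-k}P_k(x)\bigr)=t^{-n-k}\LL\bigl(x^nP_k(x)\bigr).
\]
For the denominator, the computation above gives \(\LL'(Q_k^2)=t^{-2k}\LL(P_k^2)\). Taking the ratio yields
\[
\sigma'_{n,k}=t^{k-n}\sigma_{n,k}.
\]
Setting \(k=0\), and noting that \(\sigma_{n,0}=\sigma_n\) by Definition~\ref{def:mixed_moment}, gives \(\sigma'_n=t^{-n}\sigma_n\).

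There is no real obstacle here. The only point that needs care is tracking the powers of \(t\) correctly, in particular remembering that the denominator of \(\sigma'_{n,k}\) contributes a factor \(t^{-2k}\), not \(t^{-k}\).
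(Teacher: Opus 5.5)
Your proof is correct. The orthogonality check is the same as the paper's. For the mixed moments you take a slightly different route.

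The paper never touches the normalizing constants \(\LL(P_k(x)^2)\). It substitutes \(x\mapsto tx\) in the expansion \(x^n=\sum_{k}\sigma_{n,k}P_k(x)\) from \eqref{eq:5}, rewrites \(P_k(tx)=t^kQ_k(x)\), divides by \(t^n\), and reads off \(\sigma'_{n,k}=t^{k-n}\sigma_{n,k}\) by uniqueness of the expansion in the basis \(\{Q_k\}\). That argument uses only the polynomials, in keeping with the paper's remark after \eqref{eq:5} that mixed moments can be handled without reference to \(\LL\).

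You instead evaluate the defining quotient directly. That is self-contained from the definition, but it requires tracking the \(t^{-2k}\) coming from \(\LL'(Q_k^2)\), which you do correctly.

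One small caveat on your last line. The identity \(\sigma_{n,0}=\sigma_n\) is not literally a consequence of the definition for an arbitrary OPS. If \(P_0(x)=c\), then \(\sigma_{n,0}=c\,\sigma_n\). So that identification, and likewise the statement's \(\sigma'_n=\sigma'_{n,0}\), tacitly uses the normalization \(P_0=1\); the paper makes the same assumption with its ``in particular''.

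The identity \(\sigma'_n=t^{-n}\sigma_n\) itself needs no such assumption. Since \(Q_0=P_0\) is constant, \(\LL'(Q_0^2)=\LL(P_0^2)\), while \(\LL'(x^n)=t^{-n}\LL(x^n)\).
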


\begin{proof}
  Since \( \LL'(Q_n(x) Q_m(x)) = t^{-n-m} \LL(P_n(x) P_m(x)) \), we
  obtain the first statement. Substituting \(x \mapsto tx\) in
  \eqref{eq:5} gives
\[
  (tx)^n = \sum_{k=0}^{n}\sigma_{n,k}P_k(tx) = \sum_{k=0}^{n}\sigma_{n,k} t^k Q_k(x).
\]
Dividing by \(t^n\) yields \(x^n = \sum_{k=0}^{n}t^{k-n}\sigma_{n,k}Q_k(x)\), hence
\(\sigma'_{n,k} = t^{k-n}\sigma_{n,k}\). 
\end{proof}

The monic version of \Cref{lem:2} can be restated using recurrence coefficients.

\begin{lem}\label{lem:5}
  For a nonzero constant \( t \) and recurrence coefficients
  \( (\{b_m\},\{\lambda_m\}) \), we have
  \[
    \sigma_{n,k}(\{t b_m\}, \{t^2 \lambda_m\})
    = t^{n-k} \sigma_{n,k}(\{b_m\}, \{\lambda_m\}).
  \]
  In particular, we have \( \sigma_{n}(\{t b_m\}, \{t^2 \lambda_m\}) =
t^{n} \sigma_{n}(\{b_m\}, \{\lambda_m\}) \).
\end{lem}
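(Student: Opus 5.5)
We reduce to \Cref{lem:2}, applied to the monic OPS with the original recurrence coefficients, by checking that the rescaled monic family has the rescaled recurrence coefficients. Let \( P_n(x) = P_n(x;\{b_m\},\{\lambda_m\}) \), which is an OPS with respect to some linear functional \( \LL \) by Favard's theorem. Define
\[
  R_n(x) = t^{n} P_n(x/t).
\]
This is \( Q_n \) from \Cref{lem:2} with the constant \( 1/t \) in place of \( t \). Each \( R_n \) is monic of degree \( n \). By \Cref{lem:2}, \( \{R_n(x)\} \) is an OPS with respect to \( \LL'(f(x)) = \LL(f(tx)) \), and its mixed moments satisfy
\[
\sigma'_{n,k} = (1/t)^{k-n}\sigma_{n,k} = t^{n-k}\sigma_{n,k}(\{b_m\},\{\lambda_m\}).
\]

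It remains to identify \( R_n \) with \( P_n(x;\{t b_m\},\{t^2\lambda_m\}) \). Substitute \( x \mapsto x/t \) in \eqref{eq:rec} and multiply by \( t^{n+1} \). This gives
\[
  R_{n+1}(x) = (x - t b_n) R_n(x) - t^2 \lambda_n R_{n-1}(x),
\]
with \( R_{-1}=0 \) and \( R_0=1 \). Here \( t^2\lambda_n \ne 0 \), so \( R_n(x) = P_n(x;\{t b_m\},\{t^2\lambda_m\}) \). Since mixed moments are determined by the expansion \eqref{eq:5}, which depends only on the polynomials, we get
\[
  \sigma_{n,k}(\{t b_m\},\{t^2\lambda_m\}) = t^{n-k}\sigma_{n,k}(\{b_m\},\{\lambda_m\}).
\]

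For the moments, set \( k=0 \). Both families have \( P_0 = R_0 = 1 \), so \( \sigma_n = \sigma_{n,0} \) in each case by \Cref{def:mixed_moment}. This yields the factor \( t^n \).

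The argument is routine. The only point needing care is the direction of the rescaling: \Cref{lem:2} must be used with \( 1/t \) rather than \( t \), so that the exponent comes out as \( t^{n-k} \).
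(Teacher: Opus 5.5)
Your proof is correct and is essentially the paper's argument: the paper also reduces to \Cref{lem:2} by noting that the OPS with coefficients $(\{b_m\},\{\lambda_m\})$ equals $t^{-n}P_n(tx)$, where $P_n$ is the OPS with coefficients $(\{tb_m\},\{t^2\lambda_m\})$. The only difference is that the paper applies \Cref{lem:2} with $t$ to the rescaled family rather than with $1/t$ to the original family, which sidesteps the direction issue you flag. Your explicit check of the recurrence spells out a step the paper leaves implicit.
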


\begin{proof}
  Let \( \{P_n(x)\} \) and \( \{Q_n(x)\} \) be the OPSs given by
  \begin{align*}
    P_{n+1}(x) &= (x - tb_n)P_n(x) - t^2\lambda_n P_{n-1}(x),\\
    Q_{n+1}(x) &= (x - b_n)Q_n(x) - \lambda_n Q_{n-1}(x).
  \end{align*}
  Then \( Q_n(x) = t^{-n} P_n(tx) \), hence the result follows from \Cref{lem:2}.
\end{proof}

\subsection{Multivariate orthogonal polynomials}

In this subsection, we define multivariate orthogonal polynomials
using univariate orthogonal polynomials and relate the mixed moments
and coefficients of the univariate orthogonal polynomials to those of
the corresponding multivariate ones.

Let \(\x_n = (x_1, \dots, x_n)\) be a sequence of variables. For
\( \lambda\in \Par_n \), the \emph{Schur polynomial}
\(s_\lambda(\x_n)\) is defined via the bialternant formula:
\[
s_\lambda(\x_n) := \frac{\det \big( x_i^{\lambda_j + n - j} \big)_{i,j=1}^n}{\Delta(\x_n)},
\]
where \(\Delta(\x_n) = \prod_{1 \le i < j \le n} (x_i - x_j)\) is the
Vandermonde determinant. Analogously to this bialternant construction,
a univariate orthogonal polynomial sequence \( \{P_m(x)\}_{m\ge0} \)
can be lifted to a family of multivariate orthogonal polynomials
\(\{P_\lambda(\x_n)\}_{\lambda \in \Par_n}\) as follows.

\begin{defn}
  Let \( \{P_m(x)\}_{m\ge0} \) be orthogonal polynomials. The
  corresponding \emph{multivariate orthogonal polynomials}
  \(\{P_\lambda(\x_n)\}_{\lambda \in \Par_n}\) are defined by
\[
  P_\lambda(\x_n) := \frac{\det \big( P_{\lambda_j + n - j}(x_i)
    \big)_{i,j=1}^n}{\Delta(\x_n)}.
\]
The \emph{(multivariate) mixed moments} \(M_{\lambda,\mu}\) and the
\emph{(multivariate) coefficients} \(N_{\lambda,\mu}\) of the multivariate orthogonal
polynomials \(P_\lambda(\x_n)\) are defined by
\begin{align}
  \label{eq:14}
  s_\lambda(\x_n) &= \sum_{\mu\in \Par_n} M_{\lambda,\mu} \, P_\mu(\x_n),\\
  \label{eq:15}
  P_\lambda(\x_n) &= \sum_{\mu\in \Par_n} N_{\lambda,\mu} \, s_\mu(\x_n).
\end{align}
We also write \( M_{\lambda} := M_{\lambda,\emptyset} \)
and \( N_{\lambda} := N_{\lambda,\emptyset} \),
and call \( M_\lambda \) the \emph{(multivariate) moments} of the multivariate orthogonal polynomials.
\end{defn}

We caution the reader that it is important to specify \(n\) in the
definitions of \(M_{\lambda,\mu}\) and \(N_{\lambda,\mu}\) for
\(\lambda,\mu \in \Par_n\), because their values depend on the choice
of \(n\). For example, by \Cref{lem:4} below, we have
\[
  M_{(1),(0)}=\det\begin{pmatrix}
      \sigma_{1,0}
\end{pmatrix} = \sigma_{1,0}, \qquad
M_{(1,0),(0,0)}=\det\begin{pmatrix}
      \sigma_{2,1} &\sigma_{2,0}\\
      \sigma_{0,1} &\sigma_{0,0}
  \end{pmatrix}=\sigma_{2,1},
\]
which are not equal in general.

It is well known \cite[Section~5.1.1]{Dunkl2014} that if
\( \{P_m(x)\}_{m\ge0} \) is an OPS with respect to a linear functional
\( \LL_x \) on the space of polynomials in \( x \), then the
multivariate polynomials \( P_\lambda(\x_n) \) are orthogonal with
respect to the linear functional \( \LL_{\x_n} \) on the space of
polynomials in \( x_1,\dots,x_n \) defined by
\[
  \LL_{\x_n}(f(x_1,\dots,x_n))
  = (\LL_{x_n}\circ \cdots \circ \LL_{x_1})(f(x_1,\dots,x_n)).
\]
By the orthogonality, applying \( \LL_{\x_n} \) to \eqref{eq:14} gives
\[
  M_{\lambda} = M_{\lambda,\emptyset} = \LL_{\x_n} (s_\lambda(\x_n))/\LL_{\x_n} (P_\emptyset(\x_n)).
\]
Hence, our definition of moments of multivariate orthogonal
polynomials is consistent with that in \cite[Definition~2.6]{Corteel2019}.

Since both \( \{s_\lambda(\x_n)\}_{\lambda\in \Par_n} \) and
\( \{P_\lambda(\x_n)\}_{\lambda\in \Par_n} \) are bases of the space
of symmetric polynomials in \( \x_n \), we obtain the following proposition.

\begin{prop}\label{prop:transition}
  Let \(M_{\lambda,\mu}\) and \(N_{\lambda,\mu}\) be the mixed moments
  and coefficients of the multivariate orthogonal polynomials
  \(\{P_\lambda(\x_n)\}_{\lambda \in \Par_n}\), respectively. Then the
  matrices \(M = (M_{\lambda,\mu})_{\lambda,\mu\in \Par_n}\) and
  \(N = (N_{\lambda,\mu})_{\lambda,\mu\in \Par_n}\) are mutually
  inverse:
\[
\sum_{\rho\in \Par_n} M_{\lambda,\rho} N_{\rho,\mu} = \delta_{\lambda,\mu}, \qquad
\sum_{\rho\in \Par_n} N_{\lambda,\rho} M_{\rho,\mu} = \delta_{\lambda,\mu}.
\]
\end{prop}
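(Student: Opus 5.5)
The plan is to express both families as bases of one finite-dimensional space and read off the two change-of-basis matrices. Fix \(n\) and work in \(\Lambda_n\), the space of symmetric polynomials in \(\x_n\). The Schur polynomials \(\{s_\lambda(\x_n)\}_{\lambda\in\Par_n}\) form a basis of \(\Lambda_n\). The first real step is to check that \(\{P_\lambda(\x_n)\}_{\lambda\in\Par_n}\) is also a basis.

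To do this, I would expand each column entry of the numerator determinant as \(P_{\lambda_j+n-j}(x_i)=\sum_k \nu_{\lambda_j+n-j,k}x_i^k\). By multilinearity in the columns, \(\det(P_{\lambda_j+n-j}(x_i))\) becomes a linear combination of alternants \(\det(x_i^{k_j})\). Each such alternant is either zero or \(\pm\) the alternant for a strictly decreasing sequence \(\mu_j+n-j\) with \(\mu\in\Par_n\). Dividing by \(\Delta(\x_n)\) then gives \(P_\lambda(\x_n)=\sum_\mu N_{\lambda,\mu}s_\mu(\x_n)\), so \(P_\lambda(\x_n)\) is symmetric and the sum is finite.

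Next I would show the expansion is unitriangular. Since \(\deg P_m=m\), the top term \(k_j=\lambda_j+n-j\) in every column gives \(s_\lambda\) with coefficient \(\prod_j\nu_{\lambda_j+n-j,\lambda_j+n-j}\neq 0\). Every other surviving term has \(k_j\le\lambda_j+n-j\) for all \(j\). After sorting, this yields \(\mu\subseteq\lambda\), since the sorted sequence is dominated entrywise. So the expansion is triangular with respect to inclusion of diagrams, with nonzero diagonal.

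The subspace of \(\Lambda_n\) spanned by \(s_\mu\) with \(\mu\subseteq\lambda\) is finite-dimensional. The triangularity therefore shows that the \(P_\mu(\x_n)\) with \(\mu\subseteq\lambda\) span exactly the same subspace. Hence \(\{P_\lambda(\x_n)\}\) is a basis of \(\Lambda_n\), and \(M_{\lambda,\mu}\) and \(N_{\lambda,\mu}\) are uniquely defined. Both matrices are lower triangular with respect to containment, so each row has finitely many nonzero entries and all sums over \(\rho\) below are finite.

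Finally, substitute \eqref{eq:15} into \eqref{eq:14}:
\[
s_\lambda(\x_n)=\sum_\rho M_{\lambda,\rho}\sum_\mu N_{\rho,\mu}s_\mu(\x_n).
\]
Linear independence of the Schur basis gives \(\sum_\rho M_{\lambda,\rho}N_{\rho,\mu}=\delta_{\lambda,\mu}\). The reverse substitution, with independence of \(\{P_\mu(\x_n)\}\), gives the second identity.

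The only point needing care is the triangularity and finiteness argument; the final step is routine. In that argument the main thing to check is the sorting claim: if \(k_j\le\lambda_j+n-j\) with distinct \(k_j\), then the decreasing rearrangement is still bounded entrywise by \((\lambda_j+n-j)_j\). This follows because the \(r\)-th largest of the \(k_j\) is at most the \(r\)-th largest of the bounds.
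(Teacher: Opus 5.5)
Your proposal is correct and follows the paper's route. The paper simply observes that \(\{s_\lambda(\x_n)\}\) and \(\{P_\lambda(\x_n)\}\) are both bases of the symmetric polynomials in \(\x_n\), so the two transition matrices are mutually inverse; your unitriangularity argument supplies the basis property that the paper takes for granted. One small tightening: the diagonal coefficient is exactly \(\prod_j\nu_{\lambda_j+n-j,\lambda_j+n-j}\) because any choice with some \(k_j<\lambda_j+n-j\) has \(\sum_j k_j<\sum_j(\lambda_j+n-j)\), and so gives \(\mu\subsetneq\lambda\) rather than \(\mu=\lambda\).
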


The following proposition shows that the mixed moments and
coefficients of multivariate orthogonal polynomials can be expressed
as determinants of those of univariate orthogonal polynomials.

\begin{prop} \cite[Proposition 2.10]{LHT}
  \label{lem:4}
  Let \( \{P_m(x)\}_{m\ge0} \) be monic orthogonal polynomials with
  corresponding multivariate orthogonal polynomials
  \(\{P_\lambda(\x_n)\}_{\lambda \in \Par_n}\). Let \(\sigma_{n,k}\)
  and \(\nu_{n,k}\) be the mixed moments and coefficients of
  \( \{P_m(x)\}_{m\ge0} \), respectively, and let \(M_{\lambda,\mu}\)
  and \(N_{\lambda,\mu}\) be the mixed moments and coefficients of
  \(\{P_\lambda(\x_n)\}_{\lambda \in \Par_n}\), respectively. Then we
  have
  \[
    M_{\lambda,\mu} = \det\bigl(\sigma_{\lambda_i+n-i,\mu_j+n-j}\bigr)_{i,j=1}^n,
  \]
  \[
    N_{\lambda,\mu} = \det\bigl(\nu_{\lambda_i+n-i,\mu_j+n-j}\bigr)_{i,j=1}^n.
  \]
\end{prop}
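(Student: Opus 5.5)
We prove the formula for $M_{\lambda,\mu}$ by expanding the Schur polynomial row by row; the formula for $N_{\lambda,\mu}$ follows by the same argument. Multiply both sides of \eqref{eq:14} by $\Delta(\x_n)$. The left side becomes $\det\bigl(x_i^{\lambda_j+n-j}\bigr)_{i,j=1}^n$. Expand each entry with \eqref{eq:5}:
\[
  x_i^{\lambda_j+n-j}=\sum_{k\ge0}\sigma_{\lambda_j+n-j,k}P_k(x_i).
\]
The determinant is multilinear in its columns, so we may expand column by column:
\[
  \det\bigl(x_i^{\lambda_j+n-j}\bigr)
  =\sum_{k_1,\dots,k_n\ge0}\prod_{j=1}^n\sigma_{\lambda_j+n-j,k_j}\,
  \det\bigl(P_{k_j}(x_i)\bigr)_{i,j=1}^n.
\]

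Next we organize the sum over $(k_1,\dots,k_n)$. A term with two equal $k_j$ vanishes, so only distinct indices contribute. Each tuple of distinct indices is a permutation $w\in S_n$ applied to a strictly decreasing tuple $m_1>\dots>m_n\ge0$, with $k_j=m_{w(j)}$. Write $m_j=\mu_j+n-j$; this gives a bijection between strictly decreasing tuples and partitions $\mu\in\Par_n$. Reordering the columns of $\det\bigl(P_{k_j}(x_i)\bigr)$ into decreasing order produces the factor $\operatorname{sgn}(w)$. Summing over $w$ therefore reassembles
\[
  \sum_{w}\operatorname{sgn}(w)\prod_j\sigma_{\lambda_j+n-j,\,\mu_{w(j)}+n-w(j)}
  =\det\bigl(\sigma_{\lambda_j+n-j,\mu_l+n-l}\bigr)_{j,l=1}^n.
\]
Taking the transpose, which leaves the determinant unchanged, puts the indices in the order $(i,j)$ of the statement. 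We then get
\[
  \det\bigl(x_i^{\lambda_j+n-j}\bigr)=\sum_{\mu\in\Par_n}
  \det\bigl(\sigma_{\lambda_i+n-i,\mu_j+n-j}\bigr)\det\bigl(P_{\mu_j+n-j}(x_i)\bigr).
\]

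Dividing by $\Delta(\x_n)$ gives
\[
  s_\lambda(\x_n)=\sum_{\mu\in\Par_n}\det\bigl(\sigma_{\lambda_i+n-i,\mu_j+n-j}\bigr)P_\mu(\x_n).
\]
The $P_\mu(\x_n)$ form a basis of the symmetric polynomials in $\x_n$: since $P_k$ is monic of degree $k$, $P_\mu$ is unitriangular with respect to the Schur basis. The coefficients in \eqref{eq:14} are therefore unique, which gives the claimed determinant for $M_{\lambda,\mu}$. For $N_{\lambda,\mu}$ we run the identical argument starting from $\det\bigl(P_{\lambda_j+n-j}(x_i)\bigr)$ and expand with $P_k(x)=\sum_m\nu_{k,m}x^m$ in place of \eqref{eq:5}.

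The main obstacle is bookkeeping: keeping the signs from the column permutation consistent with the row/column convention of the stated determinant, and checking that the correspondence between strictly decreasing $n$-tuples of nonnegative integers and $\Par_n$ is bijective. This is a Cauchy–Binet-type rearrangement, so we do not expect a genuine difficulty beyond careful indexing.
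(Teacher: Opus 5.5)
Your proof is correct. The paper does not reprove this statement; it cites it from \cite{LHT}. Your column-by-column expansion, followed by regrouping the distinct indices through the Leibniz formula, is the standard Cauchy--Binet proof applied to the product $\bigl(P_k(x_i)\bigr)\bigl(\sigma_{\lambda_j+n-j,k}\bigr)$, and it is complete once you invoke uniqueness of the expansion in the basis $\{P_\mu(\x_n)\}$ (resp.\ $\{s_\mu(\x_n)\}$). The transpose step you mention is only a renaming of indices and is not needed.
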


Note that every \( n\times n \) minor of the matrix
\( (\sigma_{i,j})_{i,j\ge0} \) can be expressed as
\( M_{\lambda,\mu} \) for some \( \lambda,\mu\in \Par_n \). Since
\( (\sigma_{i,j})_{i,j\ge0} \) is lower triangular, we have
\( M_{\lambda,\mu}=0 \) unless \( \mu\subseteq\lambda \). Hence, it
suffices to consider \( M_{\lambda,\mu} \) and, similarly,
\( N_{\lambda,\mu} \) for \( \mu\subseteq\lambda \).

As a consequence of \Cref{lem:4}, the univariate mixed moments and coefficients arise as special cases of the multivariate ones, yielding the following corollary; see also \cite[Theorem~4.6]{Corteel2019}.

\begin{cor}\label{cor:5}
  Using the notation in \Cref{lem:4}, for \( n\geq k \) we have
\[
   \sigma_{n,k} = M_{(n-k,0^k),\emptyset},\quad 
   \nu_{n,k} = N_{(n-k,0^k),\emptyset}.
\]
\end{cor}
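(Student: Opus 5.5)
We apply \Cref{lem:4} with \( n \) replaced by \( k+1 \) variables, taking \( \lambda = (n-k,0^k)\in\Par_{k+1} \) and \( \mu=\emptyset=(0^{k+1}) \). For consistency with the statement, we rename: let \( N \) denote the number of variables, \( N=k+1 \). The index sequences become
\[
  \lambda_i + N - i = \begin{cases} n-k+k = n, & i=1,\\ k+1-i, & 2\le i\le k+1,\end{cases}
  \qquad \mu_j+N-j = k+1-j \quad (1\le j\le k+1).
\]
So \( M_{\lambda,\emptyset} \) is the determinant of the matrix whose first row is \( (\sigma_{n,k},\sigma_{n,k-1},\dots,\sigma_{n,0}) \). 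For \( i\ge2 \), row \( i \) is \( (\sigma_{k+1-i,\,k+1-j})_{j} \).

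The key observation is that the lower \( k\times(k+1) \) block is triangular. Since \( \sigma_{a,b}=0 \) for \( b>a \) (from \eqref{eq:5}), and \( \sigma_{a,a}=1 \) because \( P_a \) is monic, row \( i\ge 2 \) has entry \( 0 \) in columns \( j<i \) and entry \( 1 \) in column \( j=i \). Expanding the determinant along the first column, only the \( (1,1) \) entry \( \sigma_{n,k} \) survives. The remaining minor, with rows and columns \( 2,\dots,k+1 \), is upper unitriangular and so has determinant \( 1 \). Hence \( M_{(n-k,0^k),\emptyset}=\sigma_{n,k} \).

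The identity \( N_{(n-k,0^k),\emptyset}=\nu_{n,k} \) follows from the identical computation with \( \nu \) in place of \( \sigma \). Here \( \nu_{a,b}=0 \) for \( b>a \) since \( \deg P_a=a \), and \( \nu_{a,a}=1 \) by monicity.

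The only delicate point is bookkeeping. One must use exactly \( k+1 \) variables, so that the trailing zeros produce the unitriangular block. One must also be careful that "\( \emptyset \)" means \( (0^{k+1}) \) in \( \Par_{k+1} \). The case \( k=0 \) is immediate, since the matrix is \( 1\times1 \).
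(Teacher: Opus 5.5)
Your proposal is correct and follows the paper's proof. Like the paper, you specialize \Cref{lem:4} to \(k+1\) variables with \(\lambda=(n-k,0^k)\) and \(\mu=(0^{k+1})\). You then observe that the resulting matrix is upper triangular with diagonal \(\sigma_{n,k},1,\dots,1\), using \(\sigma_{a,b}=0\) for \(b>a\) and \(\sigma_{a,a}=1\) by monicity, and you treat \(\nu\) the same way. Your cofactor expansion along the first column is simply a spelled-out version of taking the product of the diagonal entries.
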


\begin{proof}
  Let \( \lambda=(n-k,0^k)\in \Par_{k+1} \) and
  \( \mu=\emptyset=(0^{k+1})\in \Par_{k+1} \). Since
\[
    M_{\lambda,\mu} = \det(\sigma_{\lambda_i+k+1-i,\mu_j+k+1-j})_{i,j=1}^{k+1}
  \]
  is the determinant of an upper triangular matrix whose diagonal
  entries are \( \sigma_{n,k}, 1,1,\dots,1 \), we have
  \( M_{\lambda,\mu} = \sigma_{n,k} \). The identity for \(\nu_{n,k}\)
  can be proved similarly.
\end{proof}

When \(\mu=\emptyset\), the multivariate mixed moment
\(M_{\lambda,\mu}\) admits a determinantal representation in terms of
the univariate moments \(\sigma_n\), as stated in the following
proposition.

\begin{prop}[{\cite[Theorem~4.6]{Corteel2019}}]\label{lem:6}
  We use the notation in \Cref{lem:4}. For 
  \(\lambda\in \Par_n\), we have
  \[
   M_{\lambda} =M_{\lambda,\emptyset} =  \frac{\det(\sigma_{\lambda_i+n-i+n-j})_{i,j=1}^n}{\det(\sigma_{n-i+n-j})_{i,j=1}^n}.
  \]
\end{prop}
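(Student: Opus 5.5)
The plan is to express $M_\lambda$ through the linear functional $\LL_{\x_n}$ and then compute it with the Andréief (Cauchy–Binet) identity. As noted above, applying $\LL_{\x_n}$ to \eqref{eq:14} gives
\[
  M_{\lambda} = \frac{\LL_{\x_n}(s_\lambda(\x_n))}{\LL_{\x_n}(P_\emptyset(\x_n))}.
\]
Normalize so that $\LL(1)=1$; then $\sigma_m=\LL(x^m)$. Write $P_\emptyset(\x_n)=1$, which holds because $P_\emptyset$ is a bialternant of $P_{n-j}(x_i)$ and column operations reduce it to the Vandermonde determinant for monic $P_m$. It therefore suffices to evaluate $\LL_{\x_n}(s_\lambda(\x_n))$ and $\LL_{\x_n}(1)=1$ in a symmetric way.

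The key step is to multiply $s_\lambda$ by $\Delta(\x_n)^2$ before integrating. Orthogonality alone does not handle $s_\lambda$ directly, so I would work with the ratio
\[
  \frac{\LL_{\x_n}\bigl(s_\lambda(\x_n)\Delta(\x_n)^2\bigr)}{\LL_{\x_n}\bigl(\Delta(\x_n)^2\bigr)}.
\]
To justify this, use the orthogonality of $P_\lambda$ with respect to the functional $f\mapsto \LL_{\x_n}(f\,\Delta^2)$. This is the classical fact (Dunkl–Xu) underlying the cited orthogonality, and it is the correct weight: the factor $\Delta^2$ accompanies the product measure in the bialternant construction. I would check that this is the functional the paper intends and identify $M_\lambda$ with the ratio above. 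This is the point I expect to be the main obstacle, namely making sure the cited functional includes $\Delta^2$. If it does not, I would instead argue purely algebraically via \Cref{lem:4}: write $\sigma_{m}=\sum_k \sigma_{m,k}\,\sigma_{k}'$-type factorizations and use Cauchy–Binet.

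For the evaluation, write
\[
  s_\lambda\Delta^2=\det\bigl(x_i^{\lambda_j+n-j}\bigr)\det\bigl(x_i^{n-j}\bigr).
\]
Expanding both determinants and applying $\LL$ variable by variable gives, by the Andréief identity,
\[
  \LL_{\x_n}\Bigl(\det\bigl(f_j(x_i)\bigr)\det\bigl(g_j(x_i)\bigr)\Bigr) = n!\,\det\bigl(\LL(f_ig_j)\bigr)_{i,j=1}^n.
\]
With $f_i=x^{\lambda_i+n-i}$ and $g_j=x^{n-j}$, this yields $n!\det(\sigma_{\lambda_i+n-i+n-j})$. Taking $\lambda=\emptyset$ gives $n!\det(\sigma_{n-i+n-j})$, which is a Hankel determinant and is nonzero by the standard Hankel criterion since $K_n\neq0$. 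Dividing the two expressions gives the claimed formula.

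As a consistency check, one can alternatively verify the identity through the factorization $(\sigma_{i+j})=(\sigma_{i,k})\,\mathrm{diag}(\lambda_1\cdots\lambda_k)\,(\sigma_{j,k})^T$, which follows from $\LL(x^ix^j)=\sum_k\sigma_{i,k}\sigma_{j,k}\LL(P_k^2)$ via \eqref{eq:5}. Applying Cauchy–Binet to the rows $\lambda_i+n-i$ and the columns $n-j$, only the principal $k$-set $\{0,\dots,n-1\}$ survives, because $(\sigma_{n-j,k})$ is lower triangular. This produces $M_{\lambda,\emptyset}\cdot\prod_{k<n}\LL(P_k^2)$ by \Cref{lem:4}, and the denominator equals the same product. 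This second route avoids the functional entirely, so I would present it as the main proof.
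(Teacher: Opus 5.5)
The paper gives no proof of this statement; it imports it from \cite[Theorem~4.6]{Corteel2019}. Both of your routes are correct, but one sentence has to go. Delete the opening reduction $M_\lambda=\LL_{\x_n}(s_\lambda)/\LL_{\x_n}(P_\emptyset)$, which uses the \emph{unweighted} product functional. It is false already for $n=2$ and $\lambda=(1,0)$. With $\LL(1)=1$ and $b_m$ as in \eqref{eq:rec}, one has $\LL_{\x_2}(x_1+x_2)=2b_0$, while $M_{(1,0)}=\sigma_{2,1}=b_0+b_1$.

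Your suspicion about $\Delta^2$ was therefore right. The correct identity is $M_\lambda=\LL_{\x_n}(s_\lambda\Delta^2)/\LL_{\x_n}(\Delta^2)$. You do not need to cite Dunkl--Xu for it. The same Andr\'eief identity gives $\LL_{\x_n}(P_\lambda P_\mu\Delta^2)=n!\det\bigl(\LL(P_{\lambda_i+n-i}P_{\mu_j+n-j})\bigr)=0$ for $\lambda\ne\mu$, because some row of that matrix vanishes. With this fix, your first route is the natural integral proof in the setting of \cite{Corteel2019}: at $t=q$, each pairwise factor of the Koornwinder density equals $4(x_i-x_j)^2$.

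Your second route is genuinely different and purely algebraic. With $\LL(1)=1$, write $(\sigma_{a+b})_{a,b\ge0}=S\,\mathrm{diag}\bigl(\LL(P_k^2)\bigr)S^{T}$ with $S=(\sigma_{a,k})$ lower unitriangular. In Cauchy--Binet all sums are finite, so truncation is harmless. Only $K=\{0,\dots,n-1\}$ survives, because rows $0,\dots,n-1$ of $S$ vanish in columns $\ge n$. The surviving factor $[S]_{K,K}$ equals $1$ because $\sigma_{k,k}=1$, a point you should state explicitly. \Cref{lem:4} then identifies the remaining minor with $M_{\lambda,\emptyset}$.

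This route needs no functional or measure, only the nonvanishing of the recurrence coefficients. It also gives $\det(\sigma_{2n-i-j})=\prod_{k=0}^{n-1}\LL(P_k^2)\neq0$ for free. It fits the Cauchy--Binet style the paper uses in \Cref{lem:10} and \Cref{prop:explicit_formula}. The integral route, in exchange, is what connects the determinant formula to Corteel--Williams's definition of $M_\lambda$ through the Koornwinder density. Making the second route your main proof is the right call.
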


\begin{remark}\label{rem:1}
\Cref{lem:6} shows that the quantity \( \mathcal{K}_\lambda \)
in \cite[(12)]{Corteel2019} is exactly the moment
\( M_{\lambda} \).
Moreover, in 
\cite[Theorem~5.1]{Corteel2019}, it is shown that
for \( \lambda=(\lambda_1,\dots,\lambda_n) \), we have
\[
\mathcal{K}_\lambda=\det\left(\mathcal{K}_{\left(\lambda_i+j-i, 0^{n-j}\right)}\right)_{i, j=1}^n,
\]
where we use the convention
\( \mathcal{K}_{(r,0^{n-j})} = M_{(r,0^{n-j}),\emptyset} := 0 \) for \( r<0 \).
This is immediate from \Cref{cor:5}, noting that if \( \lambda_i+j-i<0 \),
then \( \sigma_{\lambda_i+n-i,n-j}=0 \), which is consistent with this convention:
\[
\mathcal{K}_\lambda = M_{\lambda}
 = \det(\sigma_{\lambda_i+n-i,n-j})_{i,j=1}^n
 = \det(M_{(\lambda_i+j-i,0^{n-j}),\emptyset})_{i,j=1}^n
 = \det(\mathcal{K}_{(\lambda_i+j-i,0^{n-j})})_{i,j=1}^n.
\]
\end{remark}

For a matrix \(A\) and two finite subsets \(I\) and \(J\) of its row
and column indices with \(|I|=|J|\), let \([A]_{I,J}\) denote the
minor of \(A\) corresponding to the row set \(I\) and the column set
\(J\).

\begin{lem}\label{lem:inv_minor}
  Let \(A\) be an invertible \(n \times n\) matrix, and let
  \(I,J\subseteq\{0,\dots,n-1\}\) with \(|I|=|J|\). We have
  \[
    [A^{-1}]_{I,J}=(-1)^{\norm{I}+\norm{J}}\frac{[A]_{J^c,I^c}}{\det(A)},
  \]
  where \(\norm{I}=\sum_{i\in I}i\) and
  \(I^c=\{0,\dots,n-1\}\setminus I\).
\end{lem}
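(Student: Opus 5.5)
This is Jacobi's complementary minor theorem. The plan is to derive it from the Cauchy--Binet formula, or, more directly, from a block-determinant factorization. Let $k=|I|=|J|$. We reduce to the case $I=J=\{0,\dots,k-1\}$ by permuting rows and columns, and then track the signs that the permutation produces.

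\emph{Step 1 (the leading-block case).} Write $A=\begin{pmatrix} A_{11} & A_{12}\\ A_{21} & A_{22}\end{pmatrix}$ and $B=A^{-1}=\begin{pmatrix} B_{11} & B_{12}\\ B_{21} & B_{22}\end{pmatrix}$, where $B_{11}$ is $k\times k$. From $AB=I$ we get
\[
A\begin{pmatrix} B_{11} & 0\\ B_{21} & I\end{pmatrix}=\begin{pmatrix} I & A_{12}\\ 0 & A_{22}\end{pmatrix}.
\]
Taking determinants gives $\det(A)\det(B_{11})=\det(A_{22})$. Here $A_{22}=[A]_{I^c,J^c}$ has rows $I^c$ and columns $J^c$. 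Since $B$ corresponds to $(I,J)$ via rows $I$ of $A^{-1}$ and columns $J$, the general statement pairs $[A^{-1}]_{I,J}$ with $[A]_{J^c,I^c}$: the rows of $A^{-1}$ correspond to the columns of $A$. In the leading case, $I=J$, so both conventions agree and we obtain $[A^{-1}]_{I,I}=[A]_{I^c,I^c}/\det A$, with the sign equal to $+1$, consistent with $(-1)^{2\norm{I}}$.

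\emph{Step 2 (general $I,J$).} Let $P_\sigma$ be the permutation matrix that moves the indices in $J$ to the front, preserving order, followed by the indices in $J^c$. Let $P_\tau$ do the same for $I$. Set $A'=P_\sigma A P_\tau^{T}$. This places rows $J$ and columns $I$ of $A$ in the leading block.

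Then $(A')^{-1}=P_\tau A^{-1}P_\sigma^T$. Its leading $k\times k$ block consists of rows $I$ and columns $J$ of $A^{-1}$, so its determinant is exactly $[A^{-1}]_{I,J}$, with no sign, because the order within each block is preserved. Likewise, the trailing block of $A'$ is $[A]_{J^c,I^c}$.

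Applying Step 1 to $A'$ gives
\[
[A^{-1}]_{I,J}=\frac{[A]_{J^c,I^c}}{\det A'}=\frac{[A]_{J^c,I^c}}{\operatorname{sgn}(\sigma)\operatorname{sgn}(\tau)\det A}.
\]

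\emph{Step 3 (signs).} The permutation that moves $J=\{j_1<\dots<j_k\}$ to positions $0,\dots,k-1$ has $\sum_{m}(j_m-(m-1))=\norm{J}-\binom{k}{2}$ inversions. Hence $\operatorname{sgn}(\sigma)\operatorname{sgn}(\tau)=(-1)^{\norm{I}+\norm{J}-2\binom k2}=(-1)^{\norm{I}+\norm{J}}$, which gives the claimed formula.

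The only delicate point is the bookkeeping that swaps the roles of rows and columns, which explains why the complement minor is $[A]_{J^c,I^c}$ and not $[A]_{I^c,J^c}$. The inversion count in Step 3 is routine.
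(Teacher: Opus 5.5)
Your proof is correct. The paper gives no proof of this lemma to compare against: it states it as the classical Jacobi complementary-minor identity and uses it directly in the proof of \Cref{lem:10}.

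Your argument is the standard one, and the details check out:
\begin{itemize}
\item The factorization $A\begin{pmatrix}B_{11}&0\\ B_{21}&I\end{pmatrix}=\begin{pmatrix}I&A_{12}\\ 0&A_{22}\end{pmatrix}$ gives the leading-block case.
\item The leading block of $(A')^{-1}=P_\tau A^{-1}P_\sigma^{T}$ is the submatrix of $A^{-1}$ with rows $I$ and columns $J$, in their natural order. So its determinant is exactly $[A^{-1}]_{I,J}$, with no extra sign.
\item The trailing block of $A'=P_\sigma A P_\tau^{T}$ is the submatrix of $A$ with rows $J^c$ and columns $I^c$. This is what accounts for the transposed complement $[A]_{J^c,I^c}$.
\item The inversion count $\norm{J}-\binom{k}{2}$ for the order-preserving shuffle is right, since each $j_m$ precedes exactly $j_m-(m-1)$ smaller elements of $J^c$.
\item Dividing by $\operatorname{sgn}(\sigma)\operatorname{sgn}(\tau)=\pm1$ is the same as multiplying by it.
\end{itemize}

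Two cosmetic points. First, the sentence in Step~1 about ``the general statement pairs $[A^{-1}]_{I,J}$ with $[A]_{J^c,I^c}$'' is out of place there. Step~1 only treats $I=J=\{0,\dots,k-1\}$, and the row/column swap is explained properly in Step~2. Second, you could say explicitly that the degenerate cases $k=0$ and $k=n$ are covered by the convention that an empty minor equals $1$.
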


By \Cref{lem:inv_minor}, we can prove that every multivariate mixed
moment \( M_{\lambda,\mu} \) can also be written, up to sign, as the multivariate
coefficient \( N_{\tilde\mu,\tilde\lambda} \) indexed by some
partitions \( \tilde\mu \) and \( \tilde\lambda \), which is given in
the following lemma.

\begin{lem}\label{lem:10}
  Let \( \{P_r(x)\}_{r\ge0} \) be a monic OPS. Let
  \( \lambda, \mu  \in \Par_n \) with
  \( \lambda_1, \mu_1\le m \).
  Then
  \[
    M_{\lambda, \mu}
    =
    (-1)^{|\lambda| - |\mu|}
    N_{\widetilde\mu, \widetilde\lambda},
  \]
  where \( \widetilde\lambda, \widetilde\mu \in \Par_m \) are given by
  \( \widetilde\lambda_j = n - \lambda'_{m+1-j} \) and
  \( \widetilde\mu_j = n - \mu'_{m+1-j} \).
\end{lem}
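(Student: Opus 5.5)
The plan is to realize both sides as minors of a single pair of mutually inverse finite matrices and then apply \Cref{lem:inv_minor}. Let \(N_0=m+n\). Take \(A=(\sigma_{i,j})_{i,j=0}^{N_0-1}\). Since \(\sigma_{i,j}=0\) for \(j>i\) and \(\sigma_{i,i}=1\) for a monic OPS, \(A\) is lower unitriangular. By \eqref{eq:duality-mixed-moments-coeff}, its inverse is \(A^{-1}=(\nu_{i,j})_{i,j=0}^{N_0-1}\), because the truncation of mutually inverse lower triangular infinite matrices remains inverse.

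By \Cref{lem:4}, \(M_{\lambda,\mu}=[A]_{I,J}\), where
\[
I=\{\lambda_i+n-i\}_{i=1}^n,\qquad J=\{\mu_j+n-j\}_{j=1}^n,
\]
and both sets lie in \(\{0,\dots,N_0-1\}\) since \(\lambda_1,\mu_1\le m\). The row order is decreasing, and it is the same in the minor convention as long as rows and columns are listed in the same monotone order, so no sign issues arise. Similarly, \(N_{\widetilde\mu,\widetilde\lambda}=[A^{-1}]_{\widetilde J,\widetilde I}\), where
\[
\widetilde J=\{\widetilde\mu_j+m-j\}_{j=1}^m,\qquad \widetilde I=\{\widetilde\lambda_j+m-j\}_{j=1}^m.
\]

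\textbf{Key combinatorial step.} We must show that \(\widetilde I=I^c\) and \(\widetilde J=J^c\) inside \(\{0,\dots,m+n-1\}\). This is the classical complementation lemma: for \(\lambda\) inside an \(n\times m\) box, the sets \(\{\lambda_i+n-i\}_{i=1}^n\) and \(\{n-1+j-\lambda'_j\}_{j=1}^m\) partition \(\{0,\dots,m+n-1\}\). This can be seen by reading the boundary path of \(\lambda\), where vertical steps give one set and horizontal steps the other. Reindexing \(j\mapsto m+1-j\) turns \(n-1+j-\lambda'_j\) into
\[
n+m-j-\lambda'_{m+1-j}=\widetilde\lambda_j+m-j,
\]
which is exactly \(\widetilde I\). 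I expect this identification, and in particular the bookkeeping of which index runs which way, to be the only real obstacle.

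\textbf{Applying the inverse-minor lemma.} Apply \Cref{lem:inv_minor} with \(A^{-1}\) in place of \(A\), using \(\det A=1\):
\[
[A^{-1}]_{\widetilde J,\widetilde I}=(-1)^{\norm{\widetilde J}+\norm{\widetilde I}}[A]_{\widetilde I^c,\widetilde J^c}=(-1)^{\norm{I^c}+\norm{J^c}}[A]_{I,J}.
\]

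\textbf{Sign.} We have \(\norm{I^c}+\norm{J^c}=2\binom{N_0}{2}-\norm I-\norm J\), which is congruent to \(\norm I+\norm J\) mod 2. Also
\[
\norm I-\norm J=|\lambda|-|\mu|,
\]
since the staircase parts \(\sum(n-i)\) cancel, so \(\norm I+\norm J\equiv|\lambda|-|\mu|\pmod 2\). This gives \(N_{\widetilde\mu,\widetilde\lambda}=(-1)^{|\lambda|-|\mu|}M_{\lambda,\mu}\), which is equivalent to the claim.

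Finally, check that \(\widetilde\lambda,\widetilde\mu\in\Par_m\). They are weakly decreasing because \(\lambda'\) is, and they are nonnegative because \(\lambda'_j\le n\).
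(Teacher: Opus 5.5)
Your proof is correct and follows the same route as the paper. You realize $M_{\lambda,\mu}$ and $N_{\widetilde\mu,\widetilde\lambda}$ as minors of the mutually inverse unitriangular truncations $(\sigma_{s,t})$ and $(\nu_{s,t})$ of size $m+n$, identify $\widetilde I=I^c$ and $\widetilde J=J^c$ via the boundary-path complementation, and apply \Cref{lem:inv_minor}. (One small slip: your displayed identity is \Cref{lem:inv_minor} applied to $A$ itself, not to $A^{-1}$, but the formula and the parity check that follows are correct.)
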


\begin{proof}
  Let \( \sigma_{s,t} \) and \( \nu_{s,t} \) be the mixed moments and
  the coefficients of the OPS \( \{P_r(x)\}_{r\ge0} \), respectively.
  Let \( M = (\sigma_{s, t})_{s,t=0}^{m+n-1} \) and
  \( N = (\nu_{s, t})_{s,t=0}^{m+n-1} \). By \Cref{lem:4}, we
  have \(M_{\lambda,\mu}=[M]_{I,J}\), where
  \( I = \{ \lambda_i + n - i : 1\le i\le n\} \) and
  \( J = \{ \mu_i + n - i : 1\le i\le n\} \). By the
  duality~\eqref{eq:duality-mixed-moments-coeff}, \(M\) and \(N\) are
  mutually inverse. Since the OPS is monic, \(M\) is lower
  unitriangular, and hence has determinant \(1\). Therefore, by
  \Cref{lem:inv_minor}, we have
  \[
    M_{\lambda, \mu} = [M]_{I,J}
    = (-1)^{|\lambda|-|\mu|} [N]_{J^c, I^c},
  \]
  where \( I^c=\{ 0,\dots,m+n-1\} \setminus I \) and
  \( J^c=\{ 0,\dots,m+n-1\} \setminus J \).

  Since \(\lambda_1\le m \) and \( \lambda'_1\le n \), by the
  well-known fact \cite[Ch.I~(1.7)]{Macdonald}, we have
  \begin{align*}
    \{ \lambda_i+n-i : 1 \le i \le n \} \cup \{n-1+j-\lambda_j^{\prime} : 1 \le j \le m\} &= \{0,\ldots, m+n-1\}, \\
    \{ \lambda_i+n-i : 1 \le i \le n \} \cap \{n-1+j-\lambda_j^{\prime} : 1 \le j \le m\} &= \emptyset.
  \end{align*}
  Thus, \( I^c=\{n-1+j-\lambda'_j : 1 \le j \le m\} \).
  Similarly, \( J^c=\{n-1+j-\mu'_j : 1 \le j \le m\} \).

  On the other hand, \( N_{\widetilde\mu,\widetilde\lambda} = [N]_{\widetilde{J}, \widetilde{I}} \),
  where \( \widetilde{I} = \{\widetilde\lambda_j+m-j : 1 \le j \le m\} \) and
  \( \widetilde{J} = \{\widetilde\mu_j+m-j : 1 \le j \le m\} \).
  Then the \( j \)-th largest element of \( \widetilde{I} \)
  is 
  \[
    \widetilde\lambda_j+m-j = n + m - j - \lambda'_{m+1-j},
  \]
  which is the \( j \)-th largest element of \( I^c \).
  Hence, we have \( I^c = \widetilde{I} \), and 
  similarly we have \( J^c = \widetilde{J} \).
  Therefore, we obtain
  \( M_{\lambda, \mu} = (-1)^{|\lambda| - |\mu|}
  N_{\widetilde\mu, \widetilde\lambda} \).
\end{proof}

\subsection{2-ASEP}

The two-species asymmetric simple exclusion process (2-ASEP) is a
Markov chain describing the evolution of two types of particles (heavy
and light) on a one-dimensional lattice of \(n\) sites. Each site is
empty, occupied by a heavy particle, or occupied by a light
particle. Particles can hop left and right, and only
heavy particles can enter and exit the system. In the absence of light
particles, the model reduces to the classical ASEP. See
\cite{LHT,CMW,Corteel2020a} for further details.

In the long-time limit, the system converges to a stationary
distribution. Uchiyama \cite{Uchiyama2008} showed that this stationary
distribution can be computed using a matrix ansatz.
Suppose there
exist matrices \(D, E, A\), a row vector \(\langle W|\), and a column
vector \(|V\rangle\) satisfying the following relations:
\begin{itemize}
\item \(\langle W|(\alpha E-\gamma D)=\langle W|\),
\item \((\beta D-\delta E)|V\rangle=|V\rangle\),
\item \(D E-q E D=D+E\),
\item \(D A=q A D+A\),
\item \(A E=q E A+A\).
\end{itemize}

For a statement \( P \), define \( \vec1_{P} \) to be \( 1 \) if
\( P \) is true and \( 0 \) otherwise.
Write a state as a sequence \( (\tau_1,\dots,\tau_n) \) of \( 2 \)'s,
\( 1 \)'s, and \( 0 \)'s, which represent heavy particles, light
particles, and empty sites, respectively.
Uchiyama \cite{Uchiyama2008} showed that
the stationary probability of the state
\( (\tau_1,\dots,\tau_n) \)
is proportional to
\[
  \left\langle W\left|
      \prod_{i=1}^{n}(\vec1_{\tau_i=2}D + \vec1_{\tau_i=1}A + \vec1_{\tau_i=0}E)
    \right| V\right\rangle.
\]

Introducing the fugacity parameter \( \xi \), we define the \emph{fugacity
  partition function} \(Z_n\) and the \emph{refined fugacity partition
  function} \(Z_{n,k}\) as follows:
\begin{align}\label{eq:def-Z}
Z_n &= Z_n(\xi;\alpha,\beta,\gamma,\delta;q)=\frac{\left\langle W\left|(\xi D+E)^n\right| V\right\rangle}{\left\langle W | V\right\rangle},\\
  \label{eq:2}
Z_{n,k} &= 
Z_{n,k}(\xi;\alpha,\beta,\gamma,\delta;q)
= \left[t^k\right] \frac{\left\langle W\left|(\xi D+E+tA)^n\right| V\right\rangle}{\left\langle W\left|A^k\right| V\right\rangle},
\end{align}
where \( [t^k]F \) denotes the coefficient of \( t^k \) in \( F \).
Note that $Z_{n,0} = Z_n$.

Corteel, Mandelshtam, and Williams \cite{CMW} introduced rhombic
staircase tableaux, a combinatorial model that generalizes staircase
tableaux and rhombic alternative tableaux.
For \(n,k \ge 0\), let \(\RST_{n,k}\) denote the set of rhombic
staircase tableaux of size \((n,k)\); see \cite[Section~3]{CMW} for
the definition. Following \cite[Definition~3.14]{CMW}, we define
\begin{equation}\label{eq:def-tZ}
\widetilde{Z}_{n,k} = \widetilde{Z}_{n,k}(\xi;\alpha,\beta,\gamma,\delta;q) = \sum_{T \in \RST_{n,k}} \wt(T),
\end{equation}
and \(\widetilde{Z}_n = \widetilde{Z}_{n,0}\). If \( k=0 \),
then \( \RST_{n,0} \) reduces to the set of staircase tableaux of size
\(n\).
Corteel et al. \cite{CMW} derived the following
combinatorial formula for the partition function of the 2-ASEP.

\begin{lem} \cite[Lemma~8.4]{CMW}
  \label{lem:3}
  We have
\[
  Z_{n, k}(\xi;\alpha,\beta,\gamma,\delta;q)
  =\widetilde{Z}_{n, k}(\xi;\alpha,\beta,\gamma,\delta;q) \prod_{i=2k}^{n+k-1}\left(\alpha \beta-q^{i} \gamma \delta\right)^{-1}.
\]
\end{lem}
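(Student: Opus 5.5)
This identity is \cite[Lemma~8.4]{CMW}; here is how I would reprove it via the matrix ansatz. The idea is that both sides are determined by the same recursion in the length of a word in \(D,E,A\). I would compare them word by word.

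\emph{Step 1: the left-hand side reduces to a normalized quantity.} For a word \(X\) in \(D,E,A\) with exactly \(k\) letters \(A\), I would show that the relations force \(\langle W|X|V\rangle/\langle W|A^k|V\rangle\) to be a well-defined rational function of \(\alpha,\beta,\gamma,\delta,q\).
\begin{itemize}
\item The bulk relations \(DE=qED+D+E\), \(DA=qAD+A\) and \(AE=qEA+A\) rewrite every word in normal order \(E^aA^kD^b\), modulo words of smaller length.
\item The boundary relations then remove the \(E\)'s on the left and the \(D\)'s on the right.
\end{itemize}
The key computation is the joint boundary step. Combining \(\langle W|(\alpha E-\gamma D)=\langle W|\) with \((\beta D-\delta E)|V\rangle=|V\rangle\) around a block \(E^aA^kD^b\), and commuting \(D\) and \(E\) past the \(k\) letters \(A\), produces linear equations. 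Solving them for \(\langle W|E^aA^kD^b|V\rangle\) introduces exactly one division by a factor \(\alpha\beta-q^{i}\gamma\delta\), where \(i\) counts the \(2k\) shifts from the \(A\)'s plus the current length. Doing this for lengths \(n+k\) down to \(k+1\) should give the product \(\prod_{i=2k}^{n+k-1}(\alpha\beta-q^i\gamma\delta)^{-1}\). This is the same mechanism that gives \(\prod_{i=0}^{n-1}(\alpha\beta-q^i\gamma\delta)\) for ordinary staircase tableaux when \(k=0\).

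\emph{Step 2: refine \(\widetilde Z_{n,k}\) by boundary type.} I would refine \(\widetilde{Z}_{n,k}\) according to the type of a rhombic staircase tableau, meaning the word in \(D,A,E\) read along its boundary. Let \(\widetilde Z(X)\) denote the weight sum over tableaux of type \(X\). The goal is to show that \(\widetilde Z(X)\), multiplied by the appropriate ratio of products of factors \(\alpha\beta-q^i\gamma\delta\), satisfies the same reduction rules as \(\langle W|X|V\rangle/\langle W|A^k|V\rangle\), with the same initial value \(1\) for \(X=A^k\).

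\begin{itemize}
\item For the bulk relations, I would use the local moves on rhombic tableaux from \cite{CMW}. Flipping a \(DE\), \(DA\) or \(AE\) rhombus gives the \(q\)-term, and deleting the corresponding tile gives the linear terms.
\item The boundary relations correspond to deleting the first or last diagonal. There the tiles carrying \(\alpha,\gamma\) (respectively \(\beta,\delta\)) appear.
\end{itemize}

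\emph{Step 3: sum over types.} With Steps 1 and 2 in hand, I would sum over all types with weight \(\xi^{\#D}\). This gives \(\langle W|(\xi D+E+tA)^n|V\rangle\), and taking the coefficient of \(t^k\) yields the lemma.

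The main obstacle is the boundary step in Step 2. I must verify that deleting a boundary diagonal in the tableau reproduces precisely the single new factor \(\alpha\beta-q^i\gamma\delta\) with the correct exponent \(i\), shifted by \(2k\). Verifying this requires tracking how the \(q\)-weights of the rhombi interact with the staircase edges.

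My first attempt would be to reproduce the argument of Corteel--Williams for staircase tableaux, where \(\widetilde Z_n\) satisfies the ansatz with explicit matrices. I would extend those matrices to include an operator \(A\) satisfying \(DA=qAD+A\) and \(AE=qEA+A\), for instance a \(q\)-shift-type diagonal operator acting on the same basis. Then I would check the boundary vectors directly rather than combinatorially.
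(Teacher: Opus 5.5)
\paragraph{Assessment.}
The paper gives no proof of this lemma; it quotes \cite[Lemma~8.4]{CMW}. Your architecture is the natural route: the normalized ansatz values are uniquely determined, the type-refined tableau sums satisfy the correspondingly rescaled relations, and you then sum over types.

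Step~1 is essentially right. At total length $N$ the two boundary equations combine with determinant $\alpha\beta-q^{N+k-1}\gamma\delta$, so $N=k+1,\dots,n$ gives the exponents $2k,\dots,n+k-1$. Your range ``lengths $n+k$ down to $k+1$'' is a slip: the words in $(\xi D+E+tA)^n$ have length $n$, and there are only $n-k$ factors.

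The genuine gap is Step~2. It carries all the content of the lemma, you do not carry it out, and the mechanism you sketch for it cannot work.

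\paragraph{Why Step~2 fails as sketched.}
Put $\Pi(N)=\prod_{N'=k+1}^{N}(\alpha\beta-q^{N'+k-1}\gamma\delta)$. This normalization depends on the length, and the bulk relations link words of lengths $N$ and $N-1$. So $\widetilde Z(X)/\Pi(|X|)$ satisfies $DE=qED+D+E$ only if
\[
\widetilde Z(XDEY)-q\,\widetilde Z(XEDY)=(\alpha\beta-q^{N+k-1}\gamma\delta)\bigl(\widetilde Z(XDY)+\widetilde Z(XEY)\bigr).
\]
The same holds for the $DA$ and $AE$ relations, with $\widetilde Z(XAY)$ on the right. So the signed factor appears in every relation, not only at the boundary.

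A flip/delete argument of the kind you describe would give these bulk relations without the factor, and that is false. Take $k=0$, $n=2$. Then $\widetilde Z(D)+\widetilde Z(E)=\alpha+\beta+\gamma+\delta$, while a direct count of size-$2$ staircase tableaux gives $\widetilde Z(DE)-q\widetilde Z(ED)=(\alpha\beta-q\gamma\delta)(\alpha+\beta+\gamma+\delta)$. The $\alpha\beta$ part comes from the free corner box in type $DE$. The $-q\gamma\delta$ part must cancel against the free corner box in type $ED$, which is not a simple flip. Producing $\alpha\beta-q^{i}\gamma\delta$ from a subtraction-free tableau model in all five relations is exactly the nontrivial combinatorics, and your proposal supplies none of it.

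\paragraph{The explicit-matrix fallback.}
This route does not close the gap either.
\begin{itemize}
\item Matrices with $\langle W|X|V\rangle=\widetilde Z(X)$ must satisfy the rescaled relations. An $A$ obeying the unrescaled identities $DA=qAD+A$ and $AE=qEA+A$ therefore cannot be adjoined to them.
\item A representation of the unrescaled relations (for instance with $A$ a $q^{N}$-type diagonal operator, as you suggest) only reproduces Step~1. You would still need a transfer-matrix argument identifying its matrix elements with sums over rhombic staircase tableaux of type $X$.
\end{itemize}
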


\section{Askey--Wilson moments and 2-ASEP}
\label{sec:askey-wilson-moments}

This section collects several known results scattered in the
literature and presents them in a unified notation. Our goal is to
clarify the relation between Askey--Wilson polynomials and the
partition function of the two-species ASEP, a result established in \cite{Corteel2012, Corteel2019}. More precisely, in
\Cref{subsec:AW2ASEP}, we introduce rescaled Askey--Wilson polynomials
and identify \(Z_n\) and \(Z_{n,k}\) with the corresponding moment and
mixed moment, respectively. In \Cref{subsec:explicit_alpha}, we
rewrite the recurrence coefficients of the rescaled Askey--Wilson
polynomials explicitly in terms of the ASEP parameters
\(\alpha,\beta,\gamma \), and \(\delta\) and provide the specializations needed
later.

\subsection{Askey--Wilson polynomials}
\label{subsec:AW2ASEP}

We begin with the definition of the monic Askey--Wilson polynomials.

\begin{defn}\label{def:monic-AW}
  The monic \emph{Askey--Wilson polynomials}
  \(P^{AW}_n(x)=P^{AW}_n(x;a,b,c,d|q)\) are defined by
\[
P_{n+1}^{AW}(x)=\left(x-\frac{B_n}{2}\right)P_n^{AW}(x)-\frac{A_{n-1}C_n}{4}P_{n-1}^{AW}(x),
\]
with \(P^{AW}_0(x)=1\) and \(P^{AW}_{-1}(x)=0\), where
\[
\begin{aligned}
A_n&=\frac{1-q^{n-1}abcd}{(1-q^{2n-1}abcd)(1-q^{2n}abcd)}, \\
B_n&=\frac{q^{n-1}\left((1+q^{2n-1}abcd)(qs+abcd s')-q^{n-1}(1+q)abcd(s+qs')\right)}{(1-q^{2n-2}abcd)(1-q^{2n}abcd)}, \\
C_n&=\frac{(1-q^n)(1-q^{n-1}ab)(1-q^{n-1}ac)(1-q^{n-1}ad)(1-q^{n-1}bc)(1-q^{n-1}bd)(1-q^{n-1}cd)}{(1-q^{2n-2}abcd)(1-q^{2n-1}abcd)},
\end{aligned}
\]
and
\[
s=a+b+c+d, \qquad s'=a^{-1}+b^{-1}+c^{-1}+d^{-1}.
\]
We denote by
\( \sigma_n^{AW} \),
\( \sigma_{n,k}^{AW} \), and
\( \nu_{n,k}^{AW} \)
the moments, mixed moments,
and coefficients of \( P^{AW}_n(x) \), respectively:
\begin{align*}
 \sigma_n^{AW} &= \sigma_n^{AW}(a,b,c,d;q) = \sigma_n(\{B_m/2\},\{A_{m-1}C_m/4\}), \\
 \sigma_{n,k}^{AW} &= \sigma_{n,k}^{AW}(a,b,c,d;q) = \sigma_{n,k}(\{B_m/2\},\{A_{m-1}C_m/4\}),\\
 \nu_{n,k}^{AW} &= \nu_{n,k}^{AW}(a,b,c,d;q) = \nu_{n,k}(\{B_m/2\},\{A_{m-1}C_m/4\}).
\end{align*}
\end{defn}

\begin{remark}\label{rem:5}
  In \cite[(2.5)]{Corteel2012}, non-monic Askey--Wilson polynomials
  are used. Using \Cref{lem:1}, one can convert them to the monic
  version in \Cref{def:monic-AW}.
\end{remark}

We recall the reparametrization between the Askey--Wilson parameters
\((a,b,c,d)\) and the ASEP parameters \((\alpha,\beta,\gamma,\delta)\),
which will be used repeatedly throughout this paper.

\begin{defn}\label{def:alpha} \cite[p.395]{Corteel2011}
Let
\[
  \alpha  =\frac{1-q}{1+ac+a+c}, \quad \beta=\frac{1-q}{1+bd+b+d},
  \quad \gamma=\frac{-(1-q) a c}{1+ac+a+c}, \quad \delta  =\frac{-(1-q) b d}{1+bd+b+d}.
\]
Equivalently, 
\begin{align*}
  a&=\frac{1-q-\alpha+\gamma+\sqrt{(1-q-\alpha+\gamma)^2+4 \alpha \gamma}}{2 \alpha}, \\
  b&=\frac{1-q-\beta+\delta+\sqrt{(1-q-\beta+\delta)^2+4 \beta \delta}}{2 \beta}, \\
  c&=\frac{1-q-\alpha+\gamma-\sqrt{(1-q-\alpha+\gamma)^2+4 \alpha \gamma}}{2 \alpha}, \\
  d&=\frac{1-q-\beta+\delta-\sqrt{(1-q-\beta+\delta)^2+4 \beta \delta}}{2 \beta}.
\end{align*}
\end{defn}

We now define the following rescaled Askey--Wilson polynomials, whose
moment and mixed moment will be shown to be \(Z_n\) and
\(Z_{n,k}\), respectively. 

\begin{defn}\label{def:GZ}
  The \emph{rescaled Askey--Wilson polynomials}
  \(P^Z_n(x)=P^Z_n(x;\xi;\alpha,\beta,\gamma,\delta|q)\) are defined by
\[
P^Z_{n+1}(x)= \left(x-b_n^Z\right)P^Z_n(x)-\lambda_n^Z P^Z_{n-1}(x),\qquad
P^Z_0(x)=1,\quad P^Z_{-1}(x)=0,
\]
where
\begin{align}
  \label{eq:bZ}
b_n^{Z}&=b_n^{Z}(\xi;\alpha,\beta,\gamma,\delta;q)=\frac{\sqrt{\xi}B_n'+1+\xi}{1-q},\\
  \label{eq:laZ}
\lambda_n^{Z}&=\lambda_n^{Z}(\xi;\alpha,\beta,\gamma,\delta;q)=\frac{\xi A_{n-1}'C_n'}{(1-q)^2}.
\end{align}
Here \(A_n'\), \(B_n'\), and \(C_n'\) are obtained from \( A_n,B_n, \) and
\( C_n \) defined in \Cref{def:monic-AW} by replacing \(a\), \(b\),
\(c\), and \(d\) with \(a/\sqrt{\xi}, b\sqrt{\xi}, c/\sqrt{\xi}\), and
\(d\sqrt{\xi}\), respectively, and then changing the parameters
\( (a,b,c,d) \) to \( (\alpha,\beta,\gamma,\delta) \) using
\Cref{def:alpha}.

We denote by \( \sigma_n^Z \), \( \sigma_{n,k}^Z \), and
\( \nu_{n,k}^Z \) the moments, mixed moments, and coefficients of
\( P^Z_n(x) \), respectively:
\begin{align*}
 \sigma_n^Z &= \sigma_n^Z(\xi;\alpha,\beta,\gamma,\delta;q) = \sigma_n(\{b_m^Z\},\{\lambda_m^Z\}), \\
 \sigma_{n,k}^Z &= \sigma_{n,k}^Z(\xi;\alpha,\beta,\gamma,\delta;q) = \sigma_{n,k}(\{b_m^Z\},\{\lambda_m^Z\}),\\
 \nu_{n,k}^Z &= \nu_{n,k}^Z(\xi;\alpha,\beta,\gamma,\delta;q) = \nu_{n,k}(\{b_m^Z\},\{\lambda_m^Z\}).
\end{align*}
\end{defn}

Comparing the recurrence coefficients, we have
\begin{equation} \label{eq:monic_Z}
P^Z_n(x)
=
\left(\frac{2\sqrt \xi}{1-q}\right)^n
P_n^{AW}\left(\frac{(1-q)x-(1+\xi)}{2\sqrt \xi};
\frac{a}{\sqrt \xi}, b\sqrt \xi, \frac{c}{\sqrt \xi}, d\sqrt \xi \bigg| q\right).
\end{equation}

The following theorem rewrites \cite[Theorem~1.11]{Corteel2012} in our notation.
We present the proof to explain how their normalization translates to ours
and to identify the recurrence coefficients \(b_n^Z\) and \(\lambda_n^Z\).

\begin{thm}\label{thm:Z-sigma}
  The partition function \(Z_n(\xi;\alpha,\beta,\gamma,\delta;q)\) is
  the \(n\)th moment of the rescaled Askey--Wilson polynomials. That
  is,
\begin{equation*}
Z_n(\xi;\alpha,\beta,\gamma,\delta;q)
=
\sigma_n^Z(\xi;\alpha,\beta,\gamma,\delta;q).
\end{equation*}
\end{thm}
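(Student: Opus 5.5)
The plan is to import \cite[Theorem~1.11]{Corteel2012} and translate its normalization into ours, using \Cref{lem:1} and \Cref{lem:2}. That theorem expresses the fugacity partition function \(Z_n\) as a moment of Askey--Wilson polynomials. Concretely, it says that \(Z_n\) equals \(\LL\bigl(((1+\xi)+2\sqrt{\xi}\,x)^n\bigr)/(1-q)^n\), where \(\LL\) is the Askey--Wilson functional normalized by \(\LL(1)=1\) with parameters \((a/\sqrt\xi, b\sqrt\xi, c/\sqrt\xi, d\sqrt\xi)\), and \((a,b,c,d)\) are related to \((\alpha,\beta,\gamma,\delta)\) by \Cref{def:alpha}. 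The factor \(1/(1-q)^n\) may appear in a different but equivalent form in the source. That paper works with non-monic Askey--Wilson polynomials \(p_n\) satisfying a recurrence of the form \eqref{eq:1} with \(t=2\). By \Cref{lem:1}, the monic version is exactly \(P^{AW}_n\) of \Cref{def:monic-AW}, with recurrence coefficients \(B_n/2\) and \(A_{n-1}C_n/4\). Moreover, \(p_n\) and \(P^{AW}_n\) share the same moments, since both are orthogonal for the same functional and \(p_0=P^{AW}_0=1\). So \(\sigma^{AW}_n\) computed from either normalization is \(\LL(x^n)\).

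Next I would perform the affine change of variables. Set \(y=((1-q)x-(1+\xi))/(2\sqrt\xi)\), so that \(x=((1+\xi)+2\sqrt\xi\,y)/(1-q)\). Define the functional \(\LL^Z(f(x))=\LL_y\bigl(f(((1+\xi)+2\sqrt\xi y)/(1-q))\bigr)\). By \eqref{eq:monic_Z}, each \(P^Z_n(x)\) is a nonzero constant multiple of \(P^{AW}_n(y)\). Hence \(\{P^Z_n\}\) is orthogonal with respect to \(\LL^Z\), in the same way as the argument of \Cref{lem:2}, which handles pure scaling; the translation part is trivial. Since \(P^Z_0=1\) and \(\LL^Z(1)=1\), we get
\[
\sigma^Z_n=\LL^Z(x^n)=\frac{1}{(1-q)^n}\,\LL_y\bigl(((1+\xi)+2\sqrt\xi\,y)^n\bigr).
\]
This coincides with the expression for \(Z_n\) from \cite{Corteel2012}.

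It remains to confirm that \eqref{eq:monic_Z} really follows from the recurrence in \Cref{def:GZ}. I would substitute \(P^{AW}_n(y)=c_n P^Z_n(x)\) with \(c_n=((1-q)/(2\sqrt\xi))^n\) into the Askey--Wilson recurrence. Using \(y=((1-q)x-(1+\xi))/(2\sqrt\xi)\) and multiplying through by \(c_{n+1}^{-1}\) should give
\[
b^Z_n=\frac{(1+\xi)+2\sqrt\xi\,B'_n/2}{1-q}=\frac{\sqrt\xi B_n'+1+\xi}{1-q}
\]
and
\[
\lambda^Z_n=\frac{4\xi}{(1-q)^2}\cdot\frac{A'_{n-1}C'_n}{4}=\frac{\xi A'_{n-1}C'_n}{(1-q)^2},
\]
matching \eqref{eq:bZ} and \eqref{eq:laZ}.

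The main obstacle is bookkeeping rather than any new idea. I must check precisely which normalization and which parameter convention \cite[Theorem~1.11]{Corteel2012} uses: whether the \(\sqrt\xi\) rescaling is attached to \((a,c)\) or to \((b,d)\), and how the factor \((1-q)^n\) enters. These must match \Cref{def:alpha} and the substitutions \(a/\sqrt\xi, b\sqrt\xi, c/\sqrt\xi, d\sqrt\xi\) used in \Cref{def:GZ}. If their theorem is stated via the functional for the polynomials in \cite[(2.5)]{Corteel2012}, then \Cref{rem:5} and \Cref{lem:1} handle the passage to the monic form.
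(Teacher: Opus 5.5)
Your rescaling step is correct and matches the paper's up to packaging. Your affine change of variables in the functional does the same job as the paper's \Cref{lem:5} with \(t=1/\sqrt{\xi}\), and your verification of \eqref{eq:monic_Z} from the recurrences is right. The gap is in what you import from \cite[Theorem~1.11]{Corteel2012}, which is where the real content of the proof lies. The quantity called \(Z_n\) in that theorem is the staircase-tableaux generating function, i.e.\ \(\widetilde{Z}_n=\widetilde{Z}_{n,0}\) of \eqref{eq:def-tZ}. It is not the normalized matrix-ansatz quantity \(\langle W|(\xi D+E)^n|V\rangle/\langle W|V\rangle\) of \eqref{eq:def-Z}. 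Correspondingly, that identity carries an extra normalizing factor \((abcd;q)_n(\alpha\beta)^n\). When you read it as ``\(Z_n=(1-q)^{-n}\LL\bigl(((1+\xi)+2\sqrt\xi\,x)^n\bigr)\)'' for the \(Z_n\) of \eqref{eq:def-Z}, you are assuming, up to your change of variables, exactly the statement to be proved. The discrepancy you anticipated, in how \((1-q)^n\) enters or where \(\sqrt\xi\) is attached, is not the one that matters.

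The missing ingredient is \Cref{lem:3}, i.e.\ \cite[Lemma~8.4]{CMW}, with \(k=0\). It relates the two different quantities by
\(\widetilde{Z}_n=Z_n\prod_{j=0}^{n-1}(\alpha\beta-\gamma\delta q^j)\).
You also need the identity \(\prod_{j=0}^{n-1}(\alpha\beta-\gamma\delta q^j)=(abcd;q)_n(\alpha\beta)^n\). It follows from \(abcd=\gamma\delta/(\alpha\beta)\), which comes from \(ac=-\gamma/\alpha\) and \(bd=-\delta/\beta\) in \Cref{def:alpha}. The product \(abcd\) is unchanged by the substitution \((a,b,c,d)\mapsto(a/\sqrt\xi,b\sqrt\xi,c/\sqrt\xi,d\sqrt\xi)\). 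Dividing the cited formula by this factor turns the tableaux sum into the normalized partition function \(Z_n\). Only after this step does your rescaling argument finish the proof. This is not a cosmetic normalization: it identifies two genuinely different objects and needs an external result beyond \Cref{lem:1} and \Cref{lem:2}.
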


\begin{proof}
  First, we caution the reader that the notation \( Z_n \) used in
  \cite[Theorem~1.11]{Corteel2012} means \( \widetilde{Z}_n \) in our
  notation. Note that by \Cref{lem:3}, we have
\[
  \widetilde{Z}_n(\xi;\alpha,\beta,\gamma,\delta;q) = 
  Z_n(\xi;\alpha,\beta,\gamma,\delta;q) \prod_{j=0}^{n-1}
(\alpha\beta-\gamma\delta q^j).
\]
By \Cref{def:alpha}, the factor
\(\prod_{j=0}^{n-1} (\alpha\beta-\gamma\delta q^j)\) is equal to
\((abcd;q)_n(\alpha\beta)^n\). Therefore,
\cite[Theorem~1.11]{Corteel2012} can be rewritten as
\[
  Z_n(\xi;\alpha,\beta,\gamma,\delta;q)=\xi^{n/2} \sigma_{n}(\{b_m^Z/\sqrt{\xi} \}, \{\lambda_m^Z/\xi\}).
\]
Then, by \Cref{lem:5} with \(t=1/\sqrt{\xi}\), we obtain the result.
\end{proof}

We next show that \(Z_{n,k}\) is equal to the mixed moment of the
rescaled Askey--Wilson polynomials. This is essentially equivalent to
the result of Corteel and Williams \cite[Corollary~6.2]{Corteel2019}.
Since their formulation leaves a factor implicit, we state the identity using our
notation in a slightly simpler form with all factors explicit, and
include a proof.

\begin{thm}\label{thm:Z_nk}
  The refined fugacity partition function
  \(Z_{n,k}(\xi;\alpha,\beta,\gamma,\delta;q)\) is the mixed moment of
  the rescaled Askey--Wilson polynomials. That is,
\[
Z_{n,k}(\xi;\alpha,\beta,\gamma,\delta;q)
=
\sigma_{n,k}^Z(\xi;\alpha,\beta,\gamma,\delta;q).
\]
\end{thm}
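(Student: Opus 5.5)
We prove \Cref{thm:Z_nk} the same way as \Cref{thm:Z-sigma}: quote the known result and translate normalizations. The source is Corteel and Williams \cite[Corollary~6.2]{Corteel2019}, together with \cite[Lemma~8.4]{CMW} (our \Cref{lem:3}). Their result states that the rhombic staircase tableaux generating function \(\widetilde{Z}_{n,k}\), suitably normalized, equals a mixed moment of (non-monic, unrescaled) Askey--Wilson polynomials with the parameters \((a/\sqrt\xi, b\sqrt\xi, c/\sqrt\xi, d\sqrt\xi)\). In that normalization one factor (essentially the normalization \(\LL(P_k^2)\), or a product of \((\alpha\beta-\gamma\delta q^i)\)) is left implicit.

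The proof then has three steps.

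\textbf{Step 1.} Start from \Cref{lem:3}:
\[
Z_{n,k}=\widetilde{Z}_{n,k}\prod_{i=2k}^{n+k-1}(\alpha\beta-q^i\gamma\delta)^{-1}.
\]
Rewrite the product via \Cref{def:alpha}. Since \(\alpha\beta-q^i\gamma\delta=\alpha\beta(1-q^iabcd)\) and \(\alpha\beta=(1-q)^2/((1+a)(1+c)(1+b)(1+d))\), it becomes \((\alpha\beta)^{n-k}(q^{2k}abcd;q)_{n-k}\). This factor is invariant under \(a\mapsto a/\sqrt\xi\), \(b\mapsto b\sqrt\xi\), and similarly for \(c,d\).

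\textbf{Step 2.} Translate the Corteel--Williams mixed moment into our normalization. Their polynomials satisfy a three-term recurrence of the form \eqref{eq:1}, so by \Cref{lem:1} they are scalar multiples \(c_k P_k^{AW}\) of the monic ones. For a non-monic OPS, the mixed moment \(\LL(x^nP_k)/\LL(P_k^2)\) scales by \(1/c_k\). We track this factor explicitly. Then we apply the affine change of variables \eqref{eq:monic_Z}. The shift \(x\mapsto x-(1+\xi)/(1-q)\) does not preserve mixed moments in general, so we must check that the Corteel--Williams statement is already phrased for the shifted variable \((1-q)x-(1+\xi)\), equivalently for \(\xi D+E\). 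Once that is settled, only a pure dilation by \(t=2\sqrt\xi/(1-q)\) remains. \Cref{lem:2} (or \Cref{lem:5}) then contributes \(t^{n-k}\) to the mixed moment \(\sigma_{n,k}\).

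\textbf{Step 3.} Check that the factors from Steps 1 and 2 cancel: the powers of \(\alpha\beta\), \(\xi\), \(1-q\), the \(c_k\), and \((q^{2k}abcd;q)_{n-k}\). The outcome should be exactly \(\sigma^Z_{n,k}\). As a consistency check, the case \(k=0\) must reproduce \Cref{thm:Z-sigma}, and this fixes any residual normalization.

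The main obstacle is Step 2. The implicit factor in \cite[Corollary~6.2]{Corteel2019} and the denominator \(\langle W|A^k|V\rangle\) in \eqref{eq:2} must be matched precisely. I would handle this by computing \(\langle W|A^k|V\rangle/\langle W|V\rangle\) from the relations \(DA=qAD+A\) and \(AE=qEA+A\), and comparing it with \(\LL(P^Z_k{}^2)=\lambda_1^Z\cdots\lambda_k^Z\). This shows the two normalizations differ only by the explicit product in \Cref{lem:3}.

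If the bookkeeping becomes unmanageable, there is a fallback. Prove directly from the matrix ansatz that \(x^n=\sum_k Z_{n,k}P^Z_k(x)\), i.e., that \(Z_{n,k}\) satisfies the Motzkin recurrence
\[
Z_{n+1,k}=Z_{n,k-1}+b^Z_kZ_{n,k}+\lambda^Z_{k+1}Z_{n,k+1}.
\]
This can be done by expanding \((\xi D+E+tA)^{n+1}\) and commuting one factor past the powers of \(A\). Since the recurrence determines \(\sigma_{n,k}\) uniquely from the recurrence coefficients, this yields the theorem.
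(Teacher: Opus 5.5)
\textbf{Gap in Step 2.} The gap is exactly at the point you flag as the main obstacle, and your proposed fix does not work. You want to determine the normalization left implicit in \cite[Corollary~6.2]{Corteel2019} by computing $\langle W|A^k|V\rangle/\langle W|V\rangle$ from the relations and comparing it with $\lambda^Z_1\cdots\lambda^Z_k$. The two relations involving $A$ are linear in $A$, and no other relation involves $A$, so the whole ansatz is invariant under $A\mapsto cA$. The ratio $\langle W|A^k|V\rangle/\langle W|V\rangle$ then scales by $c^k$, so it is not determined by the relations at all. Only $Z_{n,k}$, which is normalized by $\langle W|A^k|V\rangle$, is invariant under this rescaling.

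Your $k=0$ consistency check does not help either. A factor of the kind you describe (something like $\LL(P_k^2)$ or a product of $\alpha\beta-q^i\gamma\delta$) depends on $k$. An $n$-independent factor could be pinned down by taking $n=k$, since $Z_{k,k}=1=\sigma^Z_{k,k}$. But that only works after you have confirmed from the source that the cited identity is a mixed moment in the correctly shifted variable, which you also leave unchecked. A wrong shift produces an $n$-dependent discrepancy that no normalization can absorb.

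The fallback does not close the gap. The Motzkin recurrence you write is the right one, but commuting a factor past powers of $A$ gives no mechanism that produces $b^Z_k$ and $\lambda^Z_{k+1}$, because the boundary relations involve only $D$ and $E$. Moreover, that recurrence system with its initial conditions implies in particular \Cref{thm:Z-sigma}, the identification of one-species partition functions with Askey--Wilson moments \cite{USW2004,Corteel2012}. Reproving that is not a routine commutation.

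\textbf{What the paper does instead.} The idea that avoids all of this is to use \cite[Theorem~1.1, Definition~4.2]{Corteel2019} rather than Corollary~6.2. For $n\ge k$ and $\lambda=(n-k,0^k)\in\Par_{k+1}$, these give
$Z_{n,k}=\det(Z_{\lambda_i+2k+2-i-j})_{i,j=1}^{k+1}/\det(Z_{2k+2-i-j})_{i,j=1}^{k+1}$,
which involves only the one-species partition functions $Z_m$. The argument then runs as follows:
\begin{enumerate}
\item By \Cref{thm:Z-sigma}, $Z_m=\sigma^Z_m$.
\item By \Cref{lem:6}, which holds for any monic OPS, the resulting Hankel ratio equals $M^Z_{\lambda,\emptyset}$.
\item By \Cref{cor:5}, $M^Z_{(n-k,0^k),\emptyset}=\sigma^Z_{n,k}$.
\end{enumerate}
For $n<k$ both sides vanish. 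Since everything is expressed through the moments of $P^Z$ itself, no non-monic conversion, shift, or $\langle W|A^k|V\rangle$ bookkeeping arises.
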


\begin{proof}
  If \(n<k\), then both sides are equal to \( 0 \). Thus we may assume
  \(n\ge k\). Let \(\lambda=(n-k,0^k)\). By \cite[Theorem~1.1,
  Definition~4.2]{Corteel2019},
\[
Z_{n,k}(\xi;\alpha,\beta,\gamma,\delta;q)
=
\frac{
\det\left(
Z_{\lambda_i+2k+2-i-j}(\xi;\alpha,\beta,\gamma,\delta;q)
\right)_{i,j=1}^{k+1}
}{
\det\left(
Z_{2k+2-i-j}(\xi;\alpha,\beta,\gamma,\delta;q)
\right)_{i,j=1}^{k+1}
}.
\]
Applying~\Cref{thm:Z-sigma} to the right-hand side, we have
\[
Z_{n,k}(\xi;\alpha,\beta,\gamma,\delta;q)
=
\frac{
\det\left(
\sigma_{\lambda_i+2k+2-i-j}^Z
\right)_{i,j=1}^{k+1}
}{
\det\left(
\sigma_{2k+2-i-j}^Z
\right)_{i,j=1}^{k+1}
},
\]
which is equal to the corresponding multivariate mixed moment
\(M_{\lambda,\emptyset}\) by \Cref{lem:6}. Then \Cref{cor:5} yields
\(M_{\lambda,\emptyset}=\sigma_{n,k}^Z\).
This completes the proof.
\end{proof}

\subsection{Recurrence coefficients using ASEP parameters}
\label{subsec:explicit_alpha}

We rewrite the recurrence coefficients \(b_n^Z\)
and \(\lambda_n^Z\) of the rescaled Askey--Wilson polynomials
defined in \Cref{def:GZ} in terms of the ASEP parameters
\(\alpha,\beta,\gamma,\delta\).

\begin{prop}\label{prop:reparametrization}
 We have
\begin{align}
  \label{eq:bZ2}
  b_n^Z
&=
\frac{P_1(n)+P_2(n)}
{(\alpha\beta-\gamma\delta q^{2n-2})(\alpha\beta-\gamma\delta q^{2n})},\\
  \label{eq:laZ2}
\lambda_n^Z
&=
\frac{(1-q^{n})(\alpha\beta-\gamma\delta q^{n-2})(\xi\alpha+\gamma q^{n-1})(\beta+\xi\delta q^{n-1})(Q_1(n)+Q_2(n))}
{(1-q)(\alpha\beta-\gamma\delta q^{2n-2})^2(\alpha\beta-\gamma\delta q^{2n-3})(\alpha\beta-\gamma\delta q^{2n-1})},
\end{align}
where
\begin{align*}
P_1(n)
&=
(\xi\alpha+\beta)\left(\gamma\delta(q^{3n-1}-q^{2n-1}-q^{2n-2})+\alpha\beta q^n\right)\\
&\qquad
+(\gamma+\xi\delta)\left(-\gamma\delta q^{3n-2}+\alpha\beta(q^{2n}+q^{2n-1}-q^{n-1})\right),\\[2mm]
P_2(n)
&=
\frac{(1-q^n)(\alpha\beta-\gamma\delta q^{n-1})}{1-q}
\left((1+\xi)\gamma\delta q^{2n-1}+(\beta\gamma+\xi\alpha\delta)(q^n+q^{n-1})+(1+\xi)\alpha\beta\right),\\[2mm]
Q_1(n)
&=
\frac{(1-q^{n-1})(\alpha+\gamma q^{n-1})(\beta+\delta q^{n-1})(\alpha\beta-\gamma\delta q^{n-1})}{1-q},\\[2mm]
Q_2(n)
&=
q^{n-1}\Bigl(
(\alpha+\beta)(\gamma\delta q^{2n-2}-2\gamma\delta q^{n-1}+\alpha\beta)\\
&\qquad\qquad
-(\gamma+\delta)(\gamma\delta q^{2n-2}-2\alpha\beta q^{n-1}+\alpha\beta)
-(1-q)(\alpha+\delta q^{n-1})(\beta+\gamma q^{n-1})
\Bigr).
\end{align*}
\end{prop}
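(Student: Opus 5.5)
This is a direct computation from \Cref{def:GZ}: substitute the reparametrization of \Cref{def:alpha} into $A_n',B_n',C_n'$ and simplify.

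The plan is to avoid the square roots in \Cref{def:alpha}. Set $a'=a/\sqrt\xi$, $b'=b\sqrt\xi$, $c'=c/\sqrt\xi$, $d'=d\sqrt\xi$. Then $a'c'=ac/\xi$ and $b'd'=bd\xi$, and in particular $a'b'c'd'=abcd$. From \Cref{def:alpha} the unordered pairs $\{a,c\}$ and $\{b,d\}$ are the roots of quadratics with rational coefficients in $\alpha,\gamma,q$ and $\beta,\delta,q$:
\[
ac=-\gamma/\alpha,\quad a+c=\frac{1-q-\alpha+\gamma}{\alpha},\quad bd=-\delta/\beta,\quad b+d=\frac{1-q-\beta+\delta}{\beta}.
\]
Hence $abcd=\gamma\delta/(\alpha\beta)$. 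This gives the denominator factors $1-q^{j}abcd=(\alpha\beta-\gamma\delta q^{j})/(\alpha\beta)$, which appear in \eqref{eq:bZ2} and \eqref{eq:laZ2}.

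For $\lambda_n^Z$, observe that $C_n$ is symmetric under swapping $a\leftrightarrow c$ and $b\leftrightarrow d$. The product $(1-q^{n-1}a'c')(1-q^{n-1}b'd')$ becomes
\[
\frac{(\xi\alpha+\gamma q^{n-1})(\beta+\xi\delta q^{n-1})}{\xi\alpha\beta}.
\]
The remaining four factors $(1-q^{n-1}a'b')(1-q^{n-1}a'd')(1-q^{n-1}c'b')(1-q^{n-1}c'd')$ do not involve $\xi$, since $a'b'=ab$ and so on. This product is symmetric under $a\leftrightarrow c$ and under $b\leftrightarrow d$ separately. So it can be expanded in $e_1=a+c$, $e_2=ac$, $f_1=b+d$, $f_2=bd$. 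Concretely, it equals the resultant-type expression $\prod_{x\in\{a,c\}}\prod_{y\in\{b,d\}}(1-q^{n-1}xy)$, which one evaluates as $\prod_{x\in\{a,c\}}(1-q^{n-1}xf_1+q^{2n-2}x^2f_2)$ and then symmetrizes in $x$.

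After substituting the rational expressions and clearing $\alpha^2\beta^2$, one expects the result to be $(Q_1(n)+Q_2(n))$ times an explicit power of $(1-q)$. The claimed split into $Q_1$ and $Q_2$ can be checked by expanding both sides as polynomials. Combined with $A_{n-1}'$ and the factor $\xi/(1-q)^2$, the factors $(1-q^n)$ and $(\alpha\beta-\gamma\delta q^{n-2})$ from $A_{n-1}$, and the denominators $(\alpha\beta-\gamma\delta q^{2n-j})$, this yields \eqref{eq:laZ2}.

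For $b_n^Z$, I would compute $s'=a'+b'+c'+d'$ and $s''=1/a'+1/b'+1/c'+1/d'$ in terms of the elementary symmetric functions:
\[
a'+c'=\frac{a+c}{\sqrt\xi},\qquad \frac1{a'}+\frac1{c'}=\sqrt\xi\,\frac{a+c}{ac},
\]
and similarly for $b',d'$. Then $\sqrt\xi B_n'$ becomes a rational function in $\alpha,\beta,\gamma,\delta,q,\xi$ with no square roots. Adding $1+\xi$ and putting everything over the common denominator $(1-q)(\alpha\beta-\gamma\delta q^{2n-2})(\alpha\beta-\gamma\delta q^{2n})$ yields a numerator polynomial. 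I would then verify that it equals $(1-q)(P_1(n)+P_2(n))$. The grouping into $P_1$ and $P_2$ is best checked by separating the terms coming from $B_n'$ from the contribution of $1+\xi$, together with the telescoping of $(1-q^n)/(1-q)$.

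The main obstacle is purely the polynomial identity verification in the $b_n^Z$ and $Q_1+Q_2$ steps, which is heavy but mechanical. I would first treat $q^n$ as a formal variable $u$, so that both sides become polynomials in $u,q,\xi,\alpha,\beta,\gamma,\delta$. The identities can then be confirmed by expansion, by computer algebra, or by checking enough specializations (e.g.\ $\gamma=\delta=0$, then $q=0$) to pin down the degree-bounded polynomial.
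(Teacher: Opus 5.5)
Your plan is correct and is essentially the paper's proof. Like the paper, you use $a'b'c'd'=\gamma\delta/(\alpha\beta)$, $a'c'=-\gamma/(\alpha\xi)$ and $b'd'=-\xi\delta/\beta$ to get the denominators and the factor $(\xi\alpha+\gamma q^{n-1})(\beta+\xi\delta q^{n-1})$, then expand the remaining $\xi$-free product $E_n=(1-q^{n-1}ab)(1-q^{n-1}ad)(1-q^{n-1}bc)(1-q^{n-1}cd)$ through $a+c$, $ac$, $b+d$, $bd$ (the paper records $\alpha^2\beta^2E_n=(1-q)(Q_1(n)+Q_2(n))$, so exactly one power of $1-q$), and simplify $b_n^Z$ directly. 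One caveat: the final check must be a genuine expansion or an interpolation on enough points, since the two specializations you give as an example would not by themselves determine the identity.
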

\begin{proof}
  Let
  \((a',b',c',d')=(a/\sqrt{\xi}, b\sqrt{\xi}, c/\sqrt{\xi}, d\sqrt{\xi})\). By \Cref{def:alpha}, we have
\[
a'b'c'd'
=
\frac{\gamma\delta}{\alpha\beta},
\qquad
a'c'=-\frac{\gamma}{\alpha \xi},
\qquad
b'd'=-\frac{\xi\delta}{\beta}.
\]
Thus, simplifying \eqref{eq:bZ} gives the first identity
\eqref{eq:bZ2}. For the second identity \eqref{eq:laZ2}, note that by \eqref{eq:laZ},
we have \(\lambda_n^Z={\xi A'_{n-1}C'_n}/{(1-q)^2}\), where
\[
A'_{n-1}=
\frac{\alpha\beta(\alpha\beta-\gamma\delta q^{n-2})}
{(\alpha\beta-\gamma\delta q^{2n-3})(\alpha\beta-\gamma\delta q^{2n-2})},
\qquad 
C'_n=
\frac{(1-q^n)\alpha\beta(\xi\alpha+\gamma q^{n-1})(\beta+\xi\delta q^{n-1})E_n}
{\xi(\alpha\beta-\gamma\delta q^{2n-2})(\alpha\beta-\gamma\delta q^{2n-1})},
\]
and
\( E_n= (1-q^{n-1}ab) (1-q^{n-1}ad) (1-q^{n-1}bc) (1-q^{n-1}cd) \).
Expanding \(E_n\) and using \(a+c=(1-q-\alpha+\gamma)/{\alpha}\),
\(b+d=(1-q-\beta+\delta)/{\beta}\), \(ac=-{\gamma}/{\alpha}\), and
\(bd=-{\delta}/{\beta}\), we obtain
\(\alpha^2\beta^2E_n=(1-q)(Q_1(n)+Q_2(n))\). Combining these gives
\eqref{eq:laZ2}.
\end{proof}

As corollaries of \Cref{prop:reparametrization}, we give explicit
formulas for \(b_n^Z\) and \(\lambda_n^Z\) in the special cases
\(q=1\) and \(q=0\), which will be used later in \Cref{sec:proof-special-cases}.

\begin{cor}\label{cor:q=1}
  If \( q=1 \), we have
\begin{align*}
b_n^Z
&=
\frac{
\xi\alpha+\beta+\gamma+\xi\delta
+
n((1+\xi)(\alpha\beta+\gamma\delta)+2(\beta\gamma+\xi\alpha\delta))
}{
\alpha\beta-\gamma\delta
},
\\
\lambda_n^Z
&=
\frac{
n(\xi\alpha+\gamma)(\beta+\xi\delta)
((n-1)(\alpha+\gamma)(\beta+\delta)+\alpha+\beta+\gamma+\delta)
}{
(\alpha\beta-\gamma\delta)^2
}.
\end{align*}
\end{cor}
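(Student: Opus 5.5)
The plan is to set $q=1$ directly in the formulas of \Cref{prop:reparametrization}. The only delicate points are the apparent singularities coming from the factors $(1-q)^{-1}$ in $P_2(n)$ and $Q_1(n)$ and in the denominator of \eqref{eq:laZ2}; each of these is cancelled by a factor $(1-q^{k})$. For $q=1$ we therefore interpret each quotient $(1-q^{k})/(1-q)$ as its limit $k$, i.e.\ as the polynomial $1+q+\dots+q^{k-1}$ evaluated at $q=1$. This is legitimate because $b_n^Z$ and $\lambda_n^Z$ are rational functions of $q$ whose $q\to1$ limits are what the $q=1$ case means.

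\textbf{Computing $b_n^Z$.} At $q=1$ the denominator in \eqref{eq:bZ2} becomes $(\alpha\beta-\gamma\delta)^2$.
\begin{itemize}
\item In $P_1(n)$ every power of $q$ becomes $1$, giving
\[
P_1(n)=(\xi\alpha+\beta)(\alpha\beta-\gamma\delta)+(\gamma+\xi\delta)(\alpha\beta-\gamma\delta).
\]
\item In $P_2(n)$ the quotient $(1-q^n)/(1-q)$ tends to $n$, so
\[
P_2(n)=n(\alpha\beta-\gamma\delta)\bigl((1+\xi)(\alpha\beta+\gamma\delta)+2(\beta\gamma+\xi\alpha\delta)\bigr).
\]
\end{itemize}
Both terms carry a common factor $\alpha\beta-\gamma\delta$. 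Cancelling one copy against the denominator leaves exactly the claimed formula for $b_n^Z$.

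\textbf{Computing $\lambda_n^Z$.} In \eqref{eq:laZ2} the factor $(1-q^n)/(1-q)$ tends to $n$. Every factor of the form $\alpha\beta-\gamma\delta q^{j}$ becomes $\alpha\beta-\gamma\delta$, so the prefactor (excluding $Q_1+Q_2$) becomes
\[
\frac{n(\xi\alpha+\gamma)(\beta+\xi\delta)}{(\alpha\beta-\gamma\delta)^3}.
\]
This exponent is $3$ and not $2$ because four such factors appear in the denominator but only one in the numerator. Next, using $(1-q^{n-1})/(1-q)\to n-1$,
\[
Q_1(n)\to(n-1)(\alpha+\gamma)(\beta+\delta)(\alpha\beta-\gamma\delta).
\]
The last summand of $Q_2(n)$ carries a factor $(1-q)$ and so vanishes at $q=1$, leaving
\[
Q_2(n)\to(\alpha+\beta)(\alpha\beta-\gamma\delta)-(\gamma+\delta)(\gamma\delta-\alpha\beta)=(\alpha+\beta+\gamma+\delta)(\alpha\beta-\gamma\delta).
\]
Hence $Q_1+Q_2$ contributes one factor $\alpha\beta-\gamma\delta$. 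Cancelling it reduces the cube in the denominator to the square and yields the stated formula for $\lambda_n^Z$.

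\textbf{Main obstacle.} The hard part is justifying that the limit $q\to1$ is the intended meaning at $q=1$. In \Cref{def:alpha} the parameters $a,b,c,d$ themselves depend on $q$ and degenerate at $q=1$ ($\alpha=0$ etc.\ unless rescaled). To handle this I will observe that \Cref{prop:reparametrization} already expresses $b_n^Z$ and $\lambda_n^Z$ as rational functions in $\xi,\alpha,\beta,\gamma,\delta,q$ in which $(1-q)^{-1}$ appears only multiplied by $(1-q^k)$. These expressions are therefore polynomial in $q$ away from zeros of the $\alpha\beta-\gamma\delta q^{j}$ factors, so specializing $q=1$ is well defined. After that, only the routine simplifications above remain.
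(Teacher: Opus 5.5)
Your proposal is correct and is the paper's route: the paper obtains this corollary by directly specializing $q=1$ in \Cref{prop:reparametrization}, reading each $(1-q^k)/(1-q)$ as $k$. Then $P_1(n)+P_2(n)$ and $Q_1(n)+Q_2(n)$ each acquire exactly one factor $\alpha\beta-\gamma\delta$, which cancels against the denominators as you computed. Your justification is also the right one: the expressions in \Cref{prop:reparametrization} are rational functions in which $(1-q)^{-1}$ occurs only as $(1-q^k)/(1-q)=1+q+\dots+q^{k-1}$, so they have no pole at $q=1$.
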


\begin{cor}\label{cor:q=0}
   If \( q=0 \), we have
\begin{align*}
b_n^Z
&= \begin{cases}
\displaystyle
\frac{(\alpha+\delta)\xi+\beta+\gamma}{\alpha\beta-\gamma\delta},
& \mbox{if \( n=0 \)},\\[0.8em]
\displaystyle
\frac{-(\alpha \xi+\beta)\gamma\delta-(\gamma+\delta \xi)\alpha\beta+(\alpha\beta-\gamma\delta)(\beta\gamma+\alpha\delta \xi+(1+\xi)\alpha\beta)}
{(\alpha\beta-\gamma\delta)\alpha\beta},
& \mbox{if \( n=1 \)},\\[0.8em]
1+\xi,
& \mbox{if \( n\ge2 \),}
\end{cases}\\
\lambda_n^Z
&= \begin{cases}
\displaystyle
\frac{
(\xi\alpha+\gamma)(\beta+\xi\delta)
((\alpha+\beta+\gamma+\delta)(\alpha\beta-\gamma\delta)
-(\alpha+\delta)(\beta+\gamma))
}
{(\alpha\beta-\gamma\delta)^2\alpha\beta},
& \mbox{if \( n=1 \)},\\[0.8em]
\displaystyle
\frac{(\alpha\beta-\gamma\delta)\xi}{\alpha\beta},
& \mbox{if \( n=2 \)},\\[0.8em]
\xi,
& \mbox{if \( n\ge3 \).}
\end{cases}
\end{align*}
\end{cor}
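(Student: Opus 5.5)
The formulas for $q=0$ come from specializing \Cref{prop:reparametrization} at $q=0$, handling the small values of $n$ by hand. In \eqref{eq:bZ2} and \eqref{eq:laZ2} the powers $q^{n-1}$, $q^{n-2}$, $q^{2n-3}$, etc.\ are $1$ when the exponent is $0$ and vanish when it is positive. The exponents that can be $\le 0$ are small, so the cases split naturally at $n=0,1$ for $b_n^Z$ and at $n=1,2$ for $\lambda_n^Z$, with a generic case beyond. Because $\lambda_n^Z$ involves $q^{2n-3}$ and $q^{n-2}$, negative exponents could appear at $n=1$. I will avoid that by going back to the defining expressions $A'_{n-1}$ and $C'_n$ from the proof of \Cref{prop:reparametrization}, which are honest rational functions in $q$ for each fixed $n$. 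I will substitute $q=0$ only after simplifying, and treat $q^0=1$ and $q^{-1}$ terms explicitly.

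For $b_n^Z$ with $n\ge2$, every exponent in $P_1(n)$ is positive, so $P_1(n)=0$. In $P_2(n)$ only the constant term $(1+\xi)\alpha\beta$ survives in the last factor, and $(1-q^n)(\alpha\beta-\gamma\delta q^{n-1})/(1-q)\to\alpha\beta$. So $P_2(n)=(1+\xi)\alpha^2\beta^2$, while the denominator is $\alpha^2\beta^2$, giving $1+\xi$.

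For $n=1$, $P_1(1)=-(\xi\alpha+\beta)\gamma\delta-(\gamma+\xi\delta)\alpha\beta$ and $P_2(1)=(\alpha\beta-\gamma\delta)(\beta\gamma+\xi\alpha\delta+(1+\xi)\alpha\beta)$. The denominator is $(\alpha\beta-\gamma\delta)\alpha\beta$, which matches the claim.

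For $n=0$ the formula involves $q^{-1}$ and $q^{-2}$, so it is cleaner to use $b_0^Z=(\sqrt\xi B_0'+1+\xi)/(1-q)$ directly. Alternatively, $b_0^Z=\sigma_1^Z=Z_1$, which equals the known one-site partition function $(\xi(\alpha+\delta)+\beta+\gamma)/(\alpha\beta-\gamma\delta)$. I will verify this at $q=0$ from $B_0'$: multiplying the numerator and denominator of $B_0$ through by $q$ clears the negative powers before setting $q=0$.

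For $\lambda_n^Z$, the prefactor $(1-q^n)(\xi\alpha+\gamma q^{n-1})(\beta+\xi\delta q^{n-1})$ becomes $\xi\alpha\beta$ for $n\ge2$ and $(\xi\alpha+\gamma)(\beta+\xi\delta)$ for $n=1$.
\begin{itemize}
\item For $n\ge3$, $Q_1(n)=\alpha^2\beta^2$ and $Q_2(n)=0$. The remaining factors are $\alpha\beta$ in the numerator and $(\alpha\beta)^4$ in the denominator, so $\lambda_n^Z=\xi$.
\item For $n=2$, the factor $(\alpha\beta-\gamma\delta q^{0})$ survives in the numerator and $(\alpha\beta-\gamma\delta q^{1})\to\alpha\beta$ in the denominator. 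With $Q_1+Q_2=\alpha^2\beta^2$, this gives $\xi(\alpha\beta-\gamma\delta)/(\alpha\beta)$.
\item For $n=1$, $Q_1(1)=0$ and $Q_2(1)=(\alpha+\beta)(\alpha\beta-2\gamma\delta+\gamma\delta)-(\gamma+\delta)(\gamma\delta-\alpha\beta)-(\alpha+\delta)(\beta+\gamma)$. This simplifies to $(\alpha+\beta+\gamma+\delta)(\alpha\beta-\gamma\delta)-(\alpha+\delta)(\beta+\gamma)$, as claimed. The factors $(\alpha\beta-\gamma\delta q^{-1})$ and $(\alpha\beta-\gamma\delta q^{-1})$ then appear in both numerator and denominator, together with the extra $\alpha\beta$ coming from $A_0'$.
\end{itemize}

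The main obstacle is this $n=1$ bookkeeping of the $q^{-1}$ factors, and likewise the $n=0$ case of $b^Z$. I will handle it by computing $A_0'C_1'$ from the displayed formulas in the proof of \Cref{prop:reparametrization}: the $q^{-1}$ factors cancel identically as rational functions, and only then do I set $q=0$, which yields the denominator $(\alpha\beta-\gamma\delta)^2\alpha\beta$.
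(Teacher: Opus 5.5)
Your proposal is correct and takes essentially the paper's route: the corollary is just the $q=0$ specialization of \Cref{prop:reparametrization}, and your handling of $n=0,1$ is sound, whether you cancel the factors with negative powers of $q$ as rational functions before substituting or use $b_0^Z=\sigma_1^Z=Z_1$. One minor bookkeeping slip at $n=1$: only a single factor $(\alpha\beta-\gamma\delta q^{-1})$ cancels between numerator and denominator, and the $\alpha\beta$ in the final denominator comes from $(\alpha\beta-\gamma\delta q^{2n-1})$ at $n=1$, i.e.\ $\alpha\beta-\gamma\delta q\to\alpha\beta$, not from $A_0'$, which in fact equals $\alpha\beta/(\alpha\beta-\gamma\delta)$.
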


Now we present an explicit formula for the coefficient \( \nu_{n,k}^Z \)
of the rescaled Askey--Wilson polynomials when $q=0$.
We first define the quantity
$V_{n,k} = V_{n,k}(\xi;\alpha,\beta,\gamma,\delta)$ by
\begin{align}
\label{eq:V} V_{n,k} &= \sum_{i=0}^{n} \binom{n-i-1}{k-1} \binom{k+i-1}{k-1}\xi^i  \alpha\beta
    + \sum_{i=0}^{n} \binom{n-i-1}{k-1} \binom{k+i-2}{k-1}\xi^i  \alpha\delta \\
  \notag
  &\quad+  \sum_{i=0}^{n-1} \binom{n-i-2}{k-1} \binom{k+i-1}{k-1}\xi^i  \beta\gamma
    + \sum_{i=0}^{n-1} \binom{n-i-2}{k-1} \binom{k+i-2}{k-1}\xi^i  \gamma\delta \\
  \notag
  &\quad+  \sum_{i=0}^{n} \binom{n-i-1}{k-1} \binom{k+i-1}{k}\xi^i  \alpha
    + \sum_{i=0}^{n-1} \binom{n-i-2}{k-1} \binom{k+i-1}{k}\xi^i  \gamma \\
  \notag
  &\quad+  \sum_{i=0}^{n} \binom{n-i-1}{k} \binom{k+i-1}{k-1}\xi^i  \beta
    + \sum_{i=1}^{n} \binom{n-i-1}{k} \binom{k+i-2}{k-1}\xi^i  \delta \\
  \notag
  &\quad+  \sum_{i=0}^{n} \binom{n-i-1}{k} \binom{k+i-1}{k}\xi^i.
\end{align}
Here and throughout the paper, we use the convention that $\binom{a}{b}=0$ whenever $a<0$ or $b<0$, with the single exception $\binom{-1}{-1}=1$.

\begin{prop}\label{prop:7}
  If \( q = 0 \), we have
\[
(-1)^{n-k}\cdot\nu_{n,k}^Z
= \begin{cases}
  \displaystyle
  \vspace{.2cm}
 \frac{(\alpha  + \delta) \xi + \beta + \gamma}{\alpha\beta - \gamma \delta} & \mbox{if \( (n,k)=(1,0) \)},\\
  \vspace{.2cm}
  \displaystyle
\frac{\alpha \xi^2+(\gamma+\delta+1)\xi+\beta}{\alpha\beta} & \mbox{if \( (n,k)=(2,0) \)},\\
  \displaystyle
\frac{V_{n,k}(\xi;\alpha,\beta,\gamma,\delta)}{\alpha\beta}& \mbox{otherwise.}
  \end{cases}
\]
\end{prop}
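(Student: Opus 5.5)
We plan to prove \Cref{prop:7} directly from the three-term recurrence, using the special values of \Cref{cor:q=0}. Writing $P_n(x)=P^Z_n(x)=\sum_k \nu_{n,k}x^k$, the recurrence $P_{n+1}=(x-b_n)P_n-\lambda_nP_{n-1}$ gives
\[
\nu_{n+1,k}=\nu_{n,k-1}-b_n\nu_{n,k}-\lambda_n\nu_{n-1,k}.
\]
We set $\tilde\nu_{n,k}=(-1)^{n-k}\nu_{n,k}$, so that the recurrence becomes sign-free:
\[
\tilde\nu_{n+1,k}=\tilde\nu_{n,k-1}+b_n\tilde\nu_{n,k}-\lambda_n\tilde\nu_{n-1,k}.
\]
With $\tilde\nu_{0,0}=1$, the plan is to verify that the proposed closed form satisfies this recurrence and the initial data.

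\emph{Small cases.} We compute $P_1,P_2,P_3$ by hand from $b_0,b_1,\lambda_1,b_2=1+\xi,\lambda_2$. The value $\tilde\nu_{1,0}=b_0$ matches immediately. For $\tilde\nu_{2,0}=b_0b_1-\lambda_1$ there is an algebraic simplification: we expect the factor $(\alpha\beta-\gamma\delta)^2$ in the denominator to cancel, leaving $(\alpha\xi^2+(\gamma+\delta+1)\xi+\beta)/(\alpha\beta)$. We also check that $V_{n,k}/(\alpha\beta)$ gives the correct values for $(n,k)=(0,0),(1,1),(2,1),(2,2)$ and for $n=3$. The point of this step is that for $n\ge 3$ the recurrence coefficients $b_n=1+\xi$ and $\lambda_n=\xi$ are constant, except that $\lambda_2=\xi(\alpha\beta-\gamma\delta)/(\alpha\beta)$ enters only the step $n=2\to3$. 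So after verifying rows $n\le 3$ directly, the rest reduces to a constant-coefficient recurrence.

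\emph{Main induction, $n\ge3$.} We must show that
\[
V_{n+1,k}=V_{n,k-1}+(1+\xi)V_{n,k}-\xi V_{n-1,k}
\]
for all $k$, except possibly at the boundary indices, which we handle separately. Since $V$ is a linear combination of nine monomial-weighted sums $\sum_i \binom{n-i-r}{k-s}\binom{k+i-t}{k-u}\xi^i$, it suffices to check the identity for each sum separately. To do so, we use the generating function: the constant-coefficient recurrence corresponds to
\[
\sum_{n,k}\tilde\nu_{n,k}x^k z^n
\propto \frac{\text{(initial numerator)}}{1-(x+1+\xi)z+\xi z^2}
=\frac{\cdots}{(1-z)(1-\xi z)-xz}.
\]
Expanding $1/\bigl((1-z)(1-\xi z)-xz\bigr)=\sum_k x^kz^k/\bigl((1-z)(1-\xi z)\bigr)^{k+1}$ gives
\[
[z^n x^k]=\sum_i\binom{n-k-i+k}{k}\binom{k+i}{k}\xi^i,
\]
which is exactly the product-of-binomials shape appearing in $V$. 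Each term of $V$ is then the coefficient extraction of a shifted version, $z^a x^b\cdot(\text{that series})$, multiplied by the appropriate monomial in $\alpha,\beta,\gamma,\delta$.

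\emph{Assembling.} Concretely, we will compute the generating function $F(x,z)=\sum_n P_n$-type series as $N(x,z)/\bigl((1-z)(1-\xi z)-xz\bigr)$. Here the numerator $N$ is a polynomial of degree at most $3$ in $z$, determined by rows $0$–$2$ and the irregular coefficients. We will then match $N$ term by term against the nine sums, each sum corresponding to one monomial of $N$ times $1/(\alpha\beta)$. The main obstacle is this bookkeeping: the binomial conventions, namely $\binom{-1}{-1}=1$ and the summation ranges $i\le n$ versus $i\le n-1$ and $i\ge1$ for $\delta$, must encode exactly the boundary corrections coming from the irregular $b_0,b_1,\lambda_1,\lambda_2$. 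For this reason we will verify the low-degree coefficients (rows $n\le3$) explicitly and rely on the common denominator for everything beyond.
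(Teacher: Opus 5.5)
Your skeleton matches the paper's: sign-normalize, use $b_n=1+\xi$ ($n\ge2$) and $\lambda_n=\xi$ ($n\ge3$), check low rows by hand, then show the closed form obeys the constant-coefficient recurrence. Checking that last step with the kernel $1/D$, $D=(1-z)(1-\xi z)-xz$, instead of the paper's Pascal-rule manipulations is a legitimate variant.

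The gap is at the junction between base case and induction. With only rows $n\le3$ verified, the first constant-coefficient step builds row $4$ from rows $3$ and $2$. At $k=0$ this uses the exceptional entry, your $\tilde\nu_{2,0}$. Under the stated conventions,
\[
V_{2,0}=\alpha\xi^2+(\gamma+\delta+1)\xi+\beta+\xi\gamma\delta ,
\]
where the last term is the $\gamma\delta$-sum at $i=1$, both binomials there being $\binom{-1}{-1}$. Hence $V_{2,0}\neq\alpha\beta\,\tilde\nu_{2,0}$, and in fact
\[
V_{4,0}-(1+\xi)V_{3,0}+\xi V_{2,0}=\xi^2\gamma\delta\neq0 .
\]
So the identity you propose to prove for all $k$ and all $n\ge3$ is false at $(n,k)=(3,0)$.

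The same defect appears in your generating-function version. Summing the nine pieces gives
\[
\sum_{n,k\ge0}V_{n,k}x^kz^n=\frac{\bigl((1-z)(\alpha+\gamma z)+z\bigr)\bigl((1-\xi z)(\beta+\xi\delta z)+\xi z\bigr)}{D}.
\]
Its numerator has degree $4$ in $z$, with top coefficient $\xi^2\gamma\delta$. By contrast, $D\sum_{n,k}\tilde\nu_{n,k}x^kz^n$ has degree at most $3$. So the binomial conventions do not absorb the irregular $b_0,b_1,\lambda_1,\lambda_2$; that is exactly why $(1,0)$ and $(2,0)$ are excepted in the statement.

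Consequently, ``check rows $\le3$ and use the common denominator'' closes the argument only after you add the two exceptional corrections to the closed-form series. The $(2,0)$ correction $-\xi\gamma\delta z^2/(\alpha\beta)$, multiplied by the $\xi z^2$ term of $D$, cancels the $z^4$ term. After that, agreement of rows $0$--$3$ suffices. Alternatively, do what the paper does: compute row $4$ by hand and start the induction at $n=4$. Then only rows $\ge3$, where $V$ is valid for every $k$, enter the recurrence.

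A smaller correction: the nine sums are not single shifts $z^ax^b/D$. The convention $\binom{-1}{-1}=1$ produces entire $k=0$ columns, for instance $\alpha\xi^n$ for every $n\ge1$. A mixed-index sum such as $\sum_i\binom{n-i-1}{k-1}\binom{k+i-1}{k}\xi^i$ has series $\xi xz^2/((1-\xi z)D)$ over $k\ge1$. It becomes a polynomial over $D$, namely $\xi z(1-z)/D$, only after that $k=0$ column is added. Carrying this out for all nine monomials is what produces the factored numerator above.
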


\begin{proof}
Let \(u_{n,k} = (-1)^{n-k}\alpha\beta\cdot \nu_{n,k}^Z\).
Then we need to show that
\begin{equation}\label{eq:30}
u_{n,k}
= \begin{cases}
  \displaystyle
  \vspace{.2cm}
 \frac{((\alpha  + \delta) \xi + \beta + \gamma)\alpha\beta}{\alpha\beta - \gamma \delta} & \mbox{if \( (n,k)=(1,0) \)},\\
  \vspace{.2cm}
  \displaystyle
\alpha \xi^2+(\gamma+\delta+1)\xi+\beta & \mbox{if \( (n,k)=(2,0) \)},\\
  \displaystyle
V_{n,k}(\xi;\alpha,\beta,\gamma,\delta)& \mbox{otherwise.}
  \end{cases}
\end{equation}
Since \( P_n^Z(x) = \sum_{k=0}^n \nu_{n,k}^Zx^k  \), by \Cref{def:GZ}, we have
\begin{equation}\label{eq:32}
  \nu_{n+1,k}^Z=\nu_{n,k-1}^Z-b_{n}^Z\nu_{n,k}^Z-\lambda_{n}^Z\nu_{n-1,k}^Z \qquad (n\ge0).
\end{equation}
Thus, using \Cref{cor:q=0}, we compute \( u_{n,k} \) for \( 0\le n\le 4 \) as follows:
\begin{align*}
&u_{n,n}=\alpha\beta \qquad (0\le n\le 4), \\
&u_{1,0}=\frac{((\alpha  + \delta) \xi + \beta + \gamma)\alpha\beta}{\alpha\beta - \gamma \delta},\\
&u_{2,0}=\alpha \xi^2+(\gamma+\delta+1)\xi+\beta,\\
&u_{2,1}=(\xi+1)\alpha\beta+\xi\alpha\delta+\beta\gamma+\xi\alpha+\beta,\\
&u_{3,0}=\alpha \xi^3 +(\gamma +1)\xi^2+(\delta+1) \xi + \beta,\\
&u_{3,1}= (\xi^2+\xi+1)\alpha\beta +(\xi^2+\xi)\alpha\delta+(\xi+1)\beta\gamma+\xi \gamma\delta  +(2\xi^2+\xi)\alpha+(\xi+2)\beta+\xi \gamma+\xi \delta +\xi,\\
&
u_{3,2}= 2(\xi+1)\alpha\beta+\xi\alpha\delta+\beta\gamma+\xi\alpha +\beta, \\
&u_{4,0}=\alpha \xi^4 +(\gamma+1)\xi^3+\xi^2+(\delta+1)\xi+\beta,\\
&u_{4,1}= (\xi^3+\xi^2+\xi+1)\alpha\beta +(\xi^3+\xi^2+\xi)\alpha\delta+(\xi^2+\xi+1)\beta\gamma+(\xi^2+\xi)\gamma\delta\\
&\qquad\quad +(3\xi^3+2\xi^2+\xi)\alpha+(\xi^2+2\xi+3)\beta+(2\xi^2+\xi)\gamma+(\xi^2+2\xi)\delta+2\xi^2+2\xi,\\
&u_{4,2}= (3\xi^2+4\xi+3)\alpha\beta+(2\xi^2+2\xi)\alpha\delta+2(\xi+1)\beta\gamma+\xi\gamma\delta+(3\xi^2+2\xi)\alpha+(2\xi+3)\beta\\
&\qquad\quad +\xi\gamma+\xi\delta+\xi,\\
&u_{4,3}= 3(\xi+1)\alpha\beta+\xi\alpha\delta+\beta\gamma+\xi\alpha+\beta.
\end{align*}
Hence, \eqref{eq:30} holds for \( n\le 4 \).

Now let \( n\ge4 \) and suppose that \eqref{eq:30} holds for all
integers at most \( n \). By \eqref{eq:32} and \Cref{cor:q=0}, we have
\begin{equation}\label{eq:31}
  u_{n+1,k}=u_{n,k-1}+(1+\xi) u_{n,k}-\xi u_{n-1,k} \qquad (n\ge 3).
\end{equation}
By the inductive hypothesis, we have
\( u_{r,s} = V_{r,s}(\xi;\alpha,\beta,\gamma,\delta) \) for
\( r\in \{n,n-1\} \), and therefore,
\begin{align*}
[\alpha\beta]u_{n+1,k}&=
[\alpha\beta]u_{n,k-1}+(1+\xi)[\alpha\beta]u_{n,k}-[\alpha\beta]\xi u_{n-1,k}\\
&=\sum_{i=0}^{n} \binom{n-i-1}{k-2} \binom{k+i-2}{k-2}\xi^i  
+\sum_{i=0}^{n} \binom{n-i-1}{k-1} \binom{k+i-1}{k-1}\xi^i  \\
  &\quad +\sum_{i=1}^{n+1} \binom{n-i}{k-1} \binom{k+i-2}{k-1}\xi^i
  -\sum_{i=1}^{n+1} \binom{n-i-1}{k-1} \binom{k+i-2}{k-1}\xi^i\\
&=\sum_{i=0}^{n} \binom{n-i-1}{k-2} \binom{k+i-2}{k-2}\xi^i  
+\sum_{i=0}^{n} \binom{n-i-1}{k-1} \binom{k+i-1}{k-1}\xi^i\\
  &\quad +\sum_{i=0}^{n} \binom{n-i-1}{k-2} \binom{k+i-2}{k-1}\xi^i\\
&=\sum_{i=0}^{n} \binom{n-i-1}{k-2} \binom{k+i-1}{k-1}\xi^i  
+\sum_{i=0}^{n} \binom{n-i-1}{k-1} \binom{k+i-1}{k-1}\xi^i\\
&=\sum_{i=0}^{n} \binom{n-i}{k-1} \binom{k+i-1}{k-1}\xi^i \\
&=[\alpha\beta]V_{n+1,k}(\xi;\alpha,\beta,\gamma,\delta).
\end{align*}
Here \( [s]f \) denotes the coefficient of the monomial \(s\) in \(f\), viewed as a polynomial in \(\alpha,\beta,\gamma,\delta\).
A similar computation applies to each
\(s\in\{\alpha\beta,\alpha\delta,\beta\gamma,\gamma\delta,\alpha,\beta,\gamma,\delta,1\}\).
Hence, \eqref{eq:30} also holds for \( n+1 \).
By induction, the proof is complete.
\end{proof}

We finish this section with the following corollary of \Cref{prop:7},
which will be useful in \Cref{subsec:q0-xi1}.

\begin{cor}\label{cor: coe xi=1 q=0}
  Suppose \( \xi=1 \) and \( q=0 \).
  Then 
  \[
    \nu_{1,0}^Z = -\frac{\alpha+\beta+\gamma+\delta}{\alpha\beta-\gamma\delta}, \qquad 
   \nu_{2,0}^Z = \frac{\alpha+\beta+\gamma+\delta+1}{\alpha\beta},
  \]
  and for \((n,k)\ne(1,0),(2,0)\), we have
\[
\begin{aligned}
\nu_{n,k}^Z
=
\frac{(-1)^{n-k}}{\alpha\beta}
\biggl(
&\binom{n+k-1}{2k-1}\alpha\beta
+\binom{n+k-2}{2k-1}(\alpha\delta+\beta\gamma)
+\binom{n+k-3}{2k-1}\gamma\delta \\
&+\binom{n+k-1}{2k}(\alpha+\beta)
+\binom{n+k-2}{2k}(\gamma+\delta)
+\binom{n+k-1}{2k+1}
\biggr).
\end{aligned}
\]
\end{cor}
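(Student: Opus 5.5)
The corollary follows from \Cref{prop:7} by setting \(\xi=1\). The two exceptional cases are immediate: for \((n,k)=(1,0)\) we get \(-\frac{\alpha+\beta+\gamma+\delta}{\alpha\beta-\gamma\delta}\), and for \((n,k)=(2,0)\) we get \(\frac{\alpha+\gamma+\delta+1+\beta}{\alpha\beta}\), both with the correct signs \((-1)^{n-k}\). For all other \((n,k)\) it remains to show that each of the nine sums in \eqref{eq:V}, evaluated at \(\xi=1\), equals the claimed single binomial coefficient.

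The main tool is the Chu--Vandermonde identity in its "upper" form,
\[
\sum_{i} \binom{A-i}{p}\binom{B+i}{r} = \binom{A+B+1}{p+r+1},
\]
where the sum runs over all \(i\) with nonzero terms. For the \(\alpha\beta\) term we use \(\sum_{i=0}^{n}\binom{n-i-1}{k-1}\binom{k+i-1}{k-1}\), with \(A=n-1\), \(B=k-1\), \(p=r=k-1\), giving \(\binom{n+k-1}{2k-1}\). The other terms work the same way:
\begin{itemize}
\item \(\alpha\delta\) gives \(\binom{n+k-2}{2k-1}\) (here \(B=k-2\));
\item \(\beta\gamma\) gives \(\binom{n+k-2}{2k-1}\);
\item \(\gamma\delta\) gives \(\binom{n+k-3}{2k-1}\);
\item \(\alpha\) and \(\beta\) each give \(\binom{n+k-1}{2k}\);
\item \(\gamma\) and \(\delta\) each give \(\binom{n+k-2}{2k}\);
\item the constant term gives \(\binom{n+k-1}{2k+1}\).
\end{itemize}
For the \(\delta\) term, the summation starts at \(i=1\), but the \(i=0\) term \(\binom{n-1}{k}\binom{k-2}{k-1}\) vanishes whenever \(k\ge1\).

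The main obstacle is the binomial convention. With \(\binom{a}{b}=0\) for negative arguments except \(\binom{-1}{-1}=1\), the identity must still be checked on the boundary of the summation range. The upper limits \(n\) or \(n-1\) matter only through factors such as \(\binom{n-i-1}{k-1}\) at \(i=n\), which equals \(\binom{-1}{k-1}\) and is nonzero only when \(k=0\). So I would treat \(k=0\) and \(k\ge1\) separately.

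For \(k\ge1\), all binomials with negative top and nonnegative bottom vanish. The sum then matches the standard identity, which counts lattice paths or \((p+r+1)\)-subsets by the position of the middle element. One still needs to check the edge case where \(B+i\) is negative at small \(i\), such as \(\binom{k+i-2}{k-1}\) at \(i=0\); it vanishes, as required.

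For \(k=0\), \(\binom{\cdot}{-1}\) is nonzero only at \(\binom{-1}{-1}\), so each sum collapses to at most one or two terms. These can be computed directly and compared with the right-hand side, for example \(\binom{n-1}{-1}=[n=0]\) and \(\binom{n-1}{1}=n-1\). The cases \(n\le2\) with \(k=0\) need separate care, but those with \(n=1,2\) are exactly the excluded exceptions. Finally, the result is \(\nu_{n,k}^Z=(-1)^{n-k}V_{n,k}(1;\alpha,\beta,\gamma,\delta)/(\alpha\beta)\).
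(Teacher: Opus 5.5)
Your proposal is correct and takes the route the paper intends: the paper gives no separate proof and presents this as a direct corollary of \Cref{prop:7} at $\xi=1$, with each of the nine sums in \eqref{eq:V} collapsed by the Chu--Vandermonde convolution, and your handling of the binomial convention is sound (for $k\ge1$ the boundary terms vanish; for $k=0$ the sums are evaluated directly). One small imprecision: at $k=0$ the constant-term sum $\sum_{i}\binom{n-i-1}{0}\binom{i-1}{0}$ does not collapse to one or two terms, but both lower indices are $0$, so the ordinary Vandermonde identity still applies and gives $\binom{n-1}{1}$, as required.
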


\section{Koornwinder moments and Rains' conjecture}
\label{sec:koornw-moments}

In this section, we study the Koornwinder polynomials and the
multivariate Askey--Wilson polynomials. In
\Cref{sec:koornw-polyn-rains}, we introduce Koornwinder moments
and Rains' conjecture. We then present a generalization of Rains'
conjecture for certain parameter values. In \Cref{sec:deno}, we
establish explicit formulas for the multivariate moments
\(M^{AW}_\lambda\) and \(M^{Z}_\lambda\) coming from the monic and
rescaled Askey--Wilson polynomials \(P^{AW}_n(x)\) and \(P^{Z}_n(x)\),
respectively (\Cref{prop:explicit_formula} and
\Cref{cor:explicit_Koornwinder_moment}). As an application, we
determine the minimal denominators of these moments, which provides an
explicit formulation of Rains' conjecture.

\subsection{Koornwinder moments and Rains' conjecture}
\label{sec:koornw-polyn-rains}

Koornwinder polynomials are multivariate orthogonal Laurent
polynomials defined as follows.

\begin{defn} \cite[Definition~5.1]{Koornwinder1992}
For \( \lambda\in\Par_n \), the \emph{Koornwinder polynomial} \( P_\lambda(\vec z_n; a, b, c, d | q, t) \) is
  the unique Laurent polynomial that is invariant under permutation and inversion of variables,
  with leading coefficient \( z_1^{\lambda_1} \cdots z_n^{\lambda_n} \), and orthogonal with respect to the \emph{Koornwinder density}
  \[
    \prod_{1 \le i < j \le n} \frac{(z_iz_j, z_i/z_j, z_j/z_i, 1/z_iz_j; q)_{\infty}}
    {(tz_iz_j, tz_i/z_j, tz_j/z_i, t/z_iz_j; q)_{\infty}}
    \prod_{1 \le i \le n} \frac{(z_i^2, 1/z_i^2;q)_{\infty}}
    {(az_i, a/z_i, bz_i, b/z_i,cz_i,c/z_i,dz_i,d/z_i;q)_{\infty}}
  \]
  on the unit torus \( \left| z_1 \right| = \cdots = \left| z_n \right| = 1 \).
\end{defn}

Koornwinder polynomials are also known as Macdonald--Koornwinder polynomials of type \( BC \).
A key feature of Koornwinder polynomials is that they interpolate several well-known families of orthogonal polynomials.
In particular, under the specialization \( t = q \),
the Koornwinder polynomials reduce to multivariate Askey--Wilson polynomials
\cite[Definition~2.4]{Corteel2019}:
\[
  P_\lambda(\vec z_n; a, b, c, d | q, q) = 2^{|\lambda|} P_\lambda^{AW}(\x_n; a, b, c, d|q)
  = 2^{|\lambda|}\frac{\det ( P^{AW}_{\lambda_j + n - j}(x_i))_{i,j=1}^n}{\Delta(\x_n)},
\]
where \( \vec z_n=(z_1,\dots,z_n) \) and
\( \vec x_n=(x_1,\dots,x_n) \) are related by
\( x_i = (z_i+z_i^{-1})/2 \).

Motivated by this and the connection with ASEP, Corteel and Williams
\cite[Definition~4.2]{Corteel2019} define the Koornwinder moments at
\( t=q \) to be the moments of rescaled multivariate Askey--Wilson
polynomials. We give a precise definition as follows. Recall
\Cref{def:GZ} for the definition of the rescaled Askey--Wilson
polynomials \( P^Z_n(x) \).

\begin{defn}\label{def:2}
The \emph{rescaled multivariate Askey--Wilson polynomials} \( P^Z_{\lambda}(\x_n) \) are
defined by
\[
  P^Z_{\lambda}(\x_n) = P^Z_{\lambda}(\x_n;\xi;\alpha,\beta,\gamma,\delta|q)
  = \frac{\det ( P^Z_{\lambda_j + n - j}(x_i;\xi;\alpha,\beta,\gamma,\delta|q))_{i,j=1}^n}{\Delta(\x_n)}.
\]
We denote by \( M^Z_{\lambda, \mu} \) and \( N^Z_{\lambda, \mu} \)
their multivariate mixed moments and coefficients, respectively:
\begin{align*}
  M^Z_{\lambda, \mu}
  &= M^Z_{\lambda, \mu}(\xi;\alpha,\beta,\gamma,\delta;q)
    = \det(\sigma^Z_{\lambda_i+n-i, \mu_j+n-j}(\xi;\alpha,\beta,\gamma,\delta;q))_{i, j = 1}^n,\\
  N^Z_{\lambda, \mu}
  &= N^Z_{\lambda, \mu}(\xi;\alpha,\beta,\gamma,\delta;q)
    = \det(\nu^Z_{\lambda_i+n-i, \mu_j+n-j}(\xi;\alpha,\beta,\gamma,\delta;q))_{i, j = 1}^n.
\end{align*}
We also denote \( M_\lambda^Z = M_{\lambda, \emptyset}^Z \)
and \( N_\lambda^Z = N_{\lambda, \emptyset}^Z \).
We call \( M^Z_\lambda \) the \emph{Koornwinder moment} (at \( t=q \)).
\end{defn}

By \Cref{thm:Z_nk}, we have the following connection between
Koornwinder moments and ASEP: for $\lambda,\mu\in \Par_n$,
\begin{equation}\label{eq:10}
  M^{Z}_{\lambda, \mu} = \det\left(Z_{\lambda_i+n-i, \mu_j+n-j}\right)_{i, j=1}^n.
\end{equation}

\begin{remark}\label{rem:3}
  Corteel and Williams \cite[Definition~4.2]{Corteel2019} define the Koornwinder
  moment as
\[
  K_\lambda = \frac{\det\left(Z_{\lambda_i+n-i+n-j}\right)_{i, j=1}^n}{\det\left(Z_{2 n-i-j}\right)_{i, j=1}^n}.
\]
By \Cref{lem:6} and \Cref{thm:Z_nk}, we have
\( K_\lambda=M^Z_\lambda \), hence their definition of the Koornwinder
moment is equivalent to ours.
\end{remark}

To state Rains' conjecture precisely, we need the following
definitions.

For a rational function \( f \) and a polynomial \( g \) in some
variables, we say that \( g \) is a \emph{denominator} of \( f \) if
\( fg \) is a polynomial. In this case, we also say that \( fg \) is a
\emph{numerator} of \( f \).
By \Cref{lem:3} and \eqref{eq:10}, \( M^{Z}_{\lambda,\mu} \) has a
denominator consisting of factors of the form
\( (\alpha\beta-q^i\gamma\delta) \), where \( i \) is a nonnegative
integer.  We define the
\emph{minimal denominator} of \( M^{Z}_{\lambda,\mu} \) to be the
lowest-degree polynomial of the form
\( D=\prod_{i\in I} (\alpha\beta-q^i\gamma\delta) \) for a multiset
\( I \) of nonnegative integers such that
\( D\cdot M^{Z}_{\lambda,\mu} \) is a polynomial. We also define the
\emph{minimal numerator} of \( M^{Z}_{\lambda,\mu} \) to be
\( D\cdot M^{Z}_{\lambda,\mu} \).

By \Cref{lem:10}, every \(N^Z_{\lambda,\mu}\) is, up to sign, equal to
\(M^Z_{\eta,\theta}\) for some partitions \(\eta\) and \(\theta\).
Hence, \(N^Z_{\lambda,\mu}\) has the same form of denominator factors,
and we define its minimal denominator and
minimal numerator in the same way.

By \Cref{lem:3}, \(\widetilde{Z}_{n,k}\) is a numerator of \(Z_{n,k}\).
Since this polynomial is a nonzero generating function for rhombic
staircase tableaux with nonnegative coefficients, no factor of the form
\(\alpha\beta-q^i\gamma\delta\) divides it. Indeed, otherwise
\(\widetilde{Z}_{n,k}\) would vanish after substituting
\(\alpha=q^i\gamma\delta/\beta\). Under this substitution, however,
each monomial becomes a Laurent monomial with a nonnegative coefficient,
so no cancellation is possible and the result is still nonzero.
Consequently, the minimal numerator of \(Z_{n,k}\) is \(\widetilde{Z}_{n,k}\),
which is positive by its tableau interpretation.
This suggests a natural extension of the positivity to Koornwinder moments.
Motivated by this, Rains conjectured the positivity of the minimal numerator of the Koornwinder moment
in personal communication with Corteel and Williams \cite[Conjecture 4.4]{Corteel2019}.

\begin{conj}[Rains' conjecture] 
  For a partition \( \lambda \in \Par_n \), the minimal numerator of
  \( M^{Z}_{\lambda}(\xi; \alpha, \beta, \gamma, \delta; q) \) is a polynomial
  with nonnegative integer coefficients.
\end{conj}

To find the minimal numerator of \( M^{Z}_\lambda \), we need to identify its
minimal denominator. By \Cref{lem:3}, we can find the minimal denominator of each
entry \( Z_{\lambda_i+n-i, n-j} \) in the definition of
\( M^{Z}_\lambda \):
\[
  M^{Z}_\lambda = \det\left(Z_{\lambda_i+n-i, n-j}\right)_{i, j=1}^n.
\]
However, in the determinant expansion, there are nontrivial
cancellations of denominators, as the following example shows.

\begin{exam}\label{exa:1}
  Consider
\[
  M^{Z}_{(1,1)} = \det
  \begin{pmatrix}
Z_{2,1} & Z_{2,0}\\
Z_{1,1} & Z_{1,0}
  \end{pmatrix}
  = Z_{2,1} Z_{1,0} - Z_{2,0} Z_{1,1}.
\]
The minimal denominators of \( Z_{2,1} Z_{1,0} \) and
\( Z_{2,0} Z_{1,1} \) are
\( (\alpha\beta-\gamma\delta)(\alpha\beta-q^2\gamma\delta) \) and
\( (\alpha\beta-\gamma\delta)(\alpha\beta-q\gamma\delta) \),
respectively. However, the minimal denominator of \( M^{Z}_{(1,1)} \)
is \( (\alpha\beta-q\gamma\delta)(\alpha\beta-q^2\gamma\delta) \).
\end{exam}

It is a natural question whether Rains' conjecture can be generalized
to \( M^Z_{\lambda, \mu} \). However, in general, the minimal
numerator of \( M^Z_{\lambda, \mu} \) is not positive as illustrated
by the following example.

\begin{exam}\label{rem:6}
  The minimal numerator of \( M^{Z}_{(3,2),(1,0)} \) does not have 
  nonnegative integer coefficients. Substituting
  \( \alpha=\beta=\gamma=\delta=\xi=1 \) into the minimal numerator of
  \( M^{Z}_{(3,2),(1,0)} \) gives
\begin{multline*}
    -8 q^{14} - 112 q^{13} - 600 q^{12} - 1704 q^{11} - 3216 q^{10} -
    4264 q^9 - 4208 q^8 - 2608 q^7 \\
    - 8 q^6 + 2720 q^5 + 4264 q^4 + 4312 q^3 + 3232 q^2 + 1656 q + 544.
\end{multline*}
\end{exam}

Nevertheless, we can generalize Rains' conjecture to
\( M^Z_{\lambda, \mu} \) in some special cases. To do this, we need
some definitions.

A \emph{(0,1)-substitution map} is an operator \( \varphi \) on
\( \QQ(\xi,\alpha,\beta,\gamma,\delta,q) \) that substitutes some
(possibly none) of the parameters
\( \xi,\alpha,\beta,\gamma,\delta,q \) with \( 0 \) or \( 1 \) such
that \( \varphi(\alpha\beta-q^i\gamma\delta) \) is a non-constant
polynomial in \( \xi,\alpha,\beta,\gamma,\delta,q \) for all integers
\( i\ge0 \). We define the \emph{minimal denominator} of
\( \varphi(M^Z_{\lambda, \mu}) \) to be the lowest-degree polynomial
\( \varphi(D) \) such that \( \varphi(D\cdot M^Z_{\lambda, \mu}) \) is
a polynomial in \( \xi,\alpha,\beta,\gamma,\delta,q \), where
\( D=\prod_{i\in I} (\alpha\beta-q^i\gamma\delta) \) for a multiset
\( I \) of nonnegative integers. The \emph{minimal numerator} of
\( \varphi(M^Z_{\lambda, \mu}) \) is defined to be
\( \varphi(D\cdot M^Z_{\lambda, \mu}) \). Note that the
well-definedness of the minimal denominator of
\( \varphi(M^Z_{\lambda, \mu}) \) follows from the assumption that
\( \varphi(\alpha\beta-q^i\gamma\delta) \) is a non-constant
polynomial for all \( i\ge0 \).

\begin{conj}[Generalized Rains' conjecture]\label{con:gen_rains}
  Let \( \varphi \) be a \( (0,1) \)-substitution map such that 
  at least one of the following conditions holds:
\[
\varphi(q)=1,\qquad
\varphi(q)=0,\qquad
\varphi(\alpha)=0,\qquad
\varphi(\beta)=0,\qquad
\varphi(\gamma)=0,\qquad
\varphi(\delta)=0.
\]
For partitions \( \lambda,\mu\in\Par_n \), the minimal numerator
of \( \varphi(M^{Z}_{\lambda,\mu}) \) is a polynomial with nonnegative
integer coefficients.
\end{conj}

The following lemma shows that we do not need to find the minimal
numerator of \( \varphi(M^{Z}_{\lambda,\mu}) \) in order to prove
the generalized Rains' conjecture when \( \varphi(\alpha\beta)\ne0 \). Instead, it suffices to show that
\( \varphi(D\cdot M^{Z}_{\lambda,\mu}) \) is a polynomial with
nonnegative coefficients for some \( D \) of the form
\( \prod_{i\in I}(\alpha\beta-q^i\gamma\delta) \). The proof of the
lemma will be given in \Cref{sec:minimal-denominators}.

\begin{lem}\label{lem:9}
  Let \( \varphi \) be a \( (0,1) \)-substitution map with
  \( \varphi(\alpha\beta)\ne 0 \). Let
  \(D_{\min}=\prod_{i\in I_{\min}} (\alpha\beta-q^i\gamma\delta)\),
  where \( I_{\min} \) is a multiset of nonnegative integers, such
  that \(\varphi(D_{\min})\) is the minimal denominator of
  \(\varphi(M^Z_{\lambda,\mu})\). Then
  \( \varphi(D_{\min} M^{Z}_{\lambda,\mu}) \) has nonnegative
  integer coefficients if and only if there exists
  \( D=\prod_{i\in I}(\alpha\beta-q^i\gamma\delta) \) for some
  multiset \( I \) of nonnegative integers such that
  \(\varphi(DM^Z_{\lambda,\mu})\) is a polynomial with nonnegative
  integer coefficients.
\end{lem}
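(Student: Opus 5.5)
The ``only if'' direction is trivial: take \(D=D_{\min}\). For the ``if'' direction, suppose \(\varphi(DM^Z_{\lambda,\mu})\) is a polynomial with nonnegative integer coefficients for some \(D=\prod_{i\in I}(\alpha\beta-q^i\gamma\delta)\). The plan is to show that \(\varphi(D_{\min})\) divides \(\varphi(D)\). The quotient \(\varphi(D)/\varphi(D_{\min})\) is then a product of factors \(\varphi(\alpha\beta-q^i\gamma\delta)\). Finally, we show that dividing a nonnegative polynomial by such a factor, when the division is exact, preserves nonnegativity.

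\emph{Divisibility.} Write \(F=\varphi(M^Z_{\lambda,\mu})\) as a reduced fraction \(P/Q\) in \(\QQ[\xi,\alpha,\beta,\gamma,\delta,q]\). Since \(\varphi(D)F\) and \(\varphi(D_{\min})F\) are both polynomials, \(Q\) divides \(\varphi(D)\) and \(\varphi(D_{\min})\). Each factor \(\varphi(\alpha\beta-q^i\gamma\delta)\) is non-constant, and we factor these into irreducibles. Minimality of the degree of \(\varphi(D_{\min})\) forces \(\varphi(D_{\min})\) to be, up to unit, the smallest product of the available factors divisible by \(Q\). The cleanest route is to compare irreducible multiplicities. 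In the generic case each \(\alpha\beta-q^i\gamma\delta\) is irreducible and distinct for distinct \(i\), so \(\varphi(D_{\min})\) simply equals \(Q\) up to a constant, and it divides \(\varphi(D)\). We will also check the possible degenerate images: for instance with \(\varphi(q)=1\) all factors coincide with \(\alpha\beta-\gamma\delta\), and with \(\varphi(q)=0\) they become \(\alpha\beta\) for \(i\ge1\). In each case the exponents of the irreducible factors of \(Q\) determine \(\varphi(D_{\min})\), and it divides \(\varphi(D)\). I expect this case bookkeeping to be the main obstacle, though it should be routine.

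\emph{Division preserves nonnegativity.} It suffices to prove the following. If \(G\) has nonnegative coefficients and \(G=(\alpha\beta-c\,\gamma\delta)H\), where \(c\) is a monomial in the remaining variables (possibly \(0\) or \(1\)), then \(H\) has nonnegative coefficients. Here \(\varphi(\alpha\beta)\ne0\), so the leading term \(\alpha\beta\) survives. If \(c=0\), then \(H=G/(\alpha\beta)\) is obtained by shifting exponents, so it is clearly nonnegative. Otherwise, expand
\[
H=\frac{G}{\alpha\beta}\sum_{k\ge0}\left(\frac{c\gamma\delta}{\alpha\beta}\right)^k
\]
in the ring of Laurent series in \(\alpha,\beta\) with negative exponents allowed. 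Every term of this series is nonnegative, so \(H\) is nonnegative as a formal series. Because the expansion is unique and \(H\) is a polynomial, \(H\) itself has nonnegative coefficients. Substitutions of \(\alpha\) or \(\beta\) by \(1\) are handled the same way, expanding in whichever of \(\alpha,\beta\) survives (or in \(\gamma\delta\) versus the constant term). Iterating over the factors of \(\varphi(D)/\varphi(D_{\min})\) shows that \(\varphi(D_{\min}M^Z_{\lambda,\mu})\) is nonnegative. Its coefficients are integers because it equals an integer polynomial divided by monic-in-\(\alpha\beta\) factors with integer coefficients.
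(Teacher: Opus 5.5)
Your proof is correct, and its first half matches the paper. The paper also asserts, in a single sentence, that $\varphi(D_{\min})$ is obtained from $\varphi(D)$ by deleting some factors $\varphi(\alpha\beta-q^i\gamma\delta)$. The bookkeeping you defer does go through, and it is worth noting where the hypothesis enters: it uses the $i=0$ clause in the definition of a $(0,1)$-substitution map.
\begin{itemize}
\item If $\varphi(q^i\gamma\delta)=0$, the factor is the monomial $\varphi(\alpha\beta)$.
\item If $\varphi(q^i\gamma\delta)\neq 0$, then $\varphi(\gamma\delta)\neq0$. Since $\varphi(\alpha\beta-\gamma\delta)$ is non-constant, some variable among $\alpha,\beta,\gamma,\delta$ survives. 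The factor is linear in that variable with coprime monomial coefficients, hence irreducible.
\end{itemize}
So the admissible factors are pairwise equal or coprime. Consequently the minimal-degree admissible product is unique and divides every admissible $\varphi(D)$. Without that clause divisibility genuinely fails: sending $\alpha,\beta,\gamma,\delta$ all to $1$, the factors $1-q^i$ split into cyclotomic pieces, and $(1-q^2)(1-q^3)\nmid 1-q^6$.

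For the nonnegativity step you take a different route from the paper, which never divides by a binomial. The paper uses Lemma~\ref{lem:7}. If $\varphi(q^k\gamma\delta)\neq0$, then $\varphi(\alpha\beta-q^k\gamma\delta)$ vanishes at $(1,\dots,1)$, so any multiple of it with nonnegative coefficients must be $0$. Hence every surplus factor in $\varphi(D)/\varphi(D_{\min})$ has $\varphi(q^k\gamma\delta)=0$. The quotient is then a power of the monomial $\varphi(\alpha\beta)$, and dividing by a monomial clearly preserves nonnegative integer coefficients.

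Your geometric-series expansion instead proves the more general fact that exact division of a nonnegative polynomial by any binomial $m_1-m_2$ preserves nonnegativity. This is valid: use uniqueness of inverses in a suitable Laurent-series field, and Gauss's lemma or the integrality of the expansion for integer coefficients. However, it needs a case-by-case choice of expansion variable. It also hides that the binomial case is vacuous, since evaluation at $(1,\dots,1)$ already forces $G=0$ there.

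The paper's evaluation trick is shorter. It is also the same device reused later to prove minimality of the denominators in Propositions~\ref{prop:den}, \ref{prop:minimal_q0xi1}, and~\ref{pro:minimalq1}.
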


We also conjecture the following positivity involving the coefficients
\( N^{Z}_{\lambda} \) of the rescaled multivariate Askey--Wilson
polynomials.

\begin{conj}
  For a partition \( \lambda \in \Par_n \), the minimal numerator of
  \( (-1)^{|\lambda|}N^{Z}_{\lambda}(\xi; \alpha, \beta, \gamma, \delta; q) \)
  is a polynomial with nonnegative integer coefficients.
\end{conj}

\subsection{Explicit formulas for Koornwinder moments} \label{sec:deno}

We begin with necessary definitions.
For a given sequence \( f = (f_0,f_1,\dots) \), we write
\[
  (x|f)^n := \prod_{k=0}^{n-1} (x - f_k).
\]
The \emph{complete homogeneous symmetric polynomial}
\( h_n(x_1,\dots,x_{k}) \) is defined by
\[
  h_n(x_1 ,\dots, x_k) = \sum_{1 \le i_1 \le \cdots \le i_n \le k}
  x_{i_1} \cdots x_{i_n}.
\]
The following elementary result will be used later.

\begin{lem}
For a given sequence \( f = (f_0,f_1,\dots) \), we have
\begin{equation}\label{eq:xf-exp}
  x^n = \sum_{k=0}^n h_{n-k}(f_0 ,\dots, f_k) (x|f)^k.
\end{equation}
\end{lem}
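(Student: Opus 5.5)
We prove \eqref{eq:xf-exp} by induction on \(n\). The only input is the relation
\[
  x\,(x|f)^k = (x|f)^{k+1} + f_k\,(x|f)^k,
\]
which follows from writing \(x = (x - f_k) + f_k\) and using \((x|f)^{k+1} = (x - f_k)(x|f)^k\).

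For the base case \(n=0\), both sides equal \(1\), since \(h_0 = 1\) and \((x|f)^0 = 1\).

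For the inductive step, assume \eqref{eq:xf-exp} holds for \(n\). Multiplying both sides by \(x\) and applying the relation above to each term gives
\[
  x^{n+1} = \sum_{k=0}^{n} h_{n-k}(f_0,\dots,f_k)\bigl((x|f)^{k+1} + f_k (x|f)^k\bigr).
\]
We now collect the coefficient of \((x|f)^k\) for \(0 \le k \le n+1\). It equals
\[
  h_{n-k+1}(f_0,\dots,f_{k-1}) + f_k\, h_{n-k}(f_0,\dots,f_k),
\]
where we use the conventions \(h_m = 0\) for \(m<0\) and that the first term vanishes when \(k=0\).

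To finish, we need
\[
  h_m(f_0,\dots,f_k) = h_m(f_0,\dots,f_{k-1}) + f_k\, h_{m-1}(f_0,\dots,f_k),
\]
applied with \(m = n+1-k\). This is the standard recurrence for complete homogeneous symmetric polynomials. It holds because the monomials in \(h_m(f_0,\dots,f_k)\) split into two groups. Those not involving \(f_k\) sum to the first term. Those involving \(f_k\) at least once are obtained by removing one factor \(f_k\), and they sum to the second term.

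We check the edge cases. For \(k = n+1\) the coefficient is \(h_0 + 0 = 1\), which is correct. For \(k=0\) it is \(f_0 h_n(f_0) = f_0^{n+1} = h_{n+1}(f_0)\), which is also correct.

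The argument is routine throughout. The only point needing care is the index bookkeeping at \(k=0\) and \(k=n+1\).
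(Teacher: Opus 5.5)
Your proof is correct and is essentially the paper's argument: the paper also uses $x\,(x|f)^k=(x|f)^{k+1}+f_k\,(x|f)^k$ to obtain the recurrence $c_{n+1,k}=c_{n,k-1}+f_k\,c_{n,k}$ for the expansion coefficients, and matches it with the recurrence $h_{n+1-k}(f_0,\dots,f_k)=h_{n+1-k}(f_0,\dots,f_{k-1})+f_k\,h_{n-k}(f_0,\dots,f_k)$. Your direct induction on $n$ is an unpacked version of that comparison, and your handling of the boundary cases $k=0$ and $k=n+1$ is correct.
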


\begin{proof}
Since \((x|f)^k\) has degree \(k\), we may write
\( x^n = \sum_{k=0}^{n} c_{n,k}\,(x|f)^k \) and set
\(c_{n,k}=0\) if \(k<0\) or \(k>n\). Since
\( x\cdot(x|f)^k = (x|f)^{k+1} + f_k\,(x|f)^k \),
we have
\[
  c_{n+1,k} = c_{n,k-1} + f_k\,c_{n,k},
  \qquad c_{0,0}=1.
\]
On the other hand, the complete homogeneous symmetric polynomials
satisfy
\[
  h_{n+1-k}(f_0,\dots,f_k)
  =
  h_{n+1-k}(f_0,\dots,f_{k-1})
  + f_k\,h_{n-k}(f_0,\dots,f_k),
\]
with \(h_0=1\) and \(h_m=0\) for \(m<0\). Hence
\(c_{n,k}=h_{n-k}(f_0,\dots,f_k)\) for all \(n,k\), proving
\eqref{eq:xf-exp}.
\end{proof}

For a given sequence \( f = (f_0,f_1,\dots) \) and partitions
\( \lambda, \mu \in \Par_n \), we define
\[
 H_{\lambda, \mu}(f) := \det \left(
      h_{\lambda_i - \mu_j + j - i}(f_0 ,\dots, f_{\mu_j + n - j}) \right)_{i, j = 1}^{n}.
\]

\begin{remark}
  If \( \x=(x_0,x_1,\dots) \) is a sequence of variables,
  \( H_{\lambda, \mu}(\x) \) coincides with a flagged Schur
  polynomial, which is a generating function for semistandard Young
  tableaux with bounds on the entries. See \cite{Wachs1985} for more
  details.
\end{remark}

Now we are ready to state the main result of this subsection. We
denote by 
\[
  M^{AW}_{\lambda,\mu} = M^{AW}_{\lambda,\mu}(a, b, c, d; q)
\]
 the mixed moments of the
multivariate Askey--Wilson polynomials \(P^{AW}_\lambda(\x_n;a, b, c, d|q)\), and
write \(M^{AW}_\lambda=M^{AW}_{\lambda,\emptyset}\). The following
theorem gives an explicit formula for \( M^{AW}_\lambda \).

\begin{thm} \label{prop:explicit_formula}
  For a partition \( \lambda \in \Par_n \), we have
  \begin{align*}
    M^{AW}_\lambda(a, b, c, d; q) &= \sum_{\mu \subseteq \lambda} (-2a)^{-|\mu|} H_{\lambda , \mu}(f)
    s_\mu(1, q, \dots, q^{n-1}) \\
      &\quad \times \prod_{i=1}^n q^{\binom{n-i}{2}-\binom{\mu_i+n-i}{2}} \frac
      {(ab q^{n-i},ac q^{n-i},ad q^{n-i};q)_{\mu_i}}
      {\left(abcd q^{2 n-i-1};q \right)_{\mu_i}},
  \end{align*}
  where \( f = (f_0,f_1,\dots) \) is the sequence given by \( f_k = (aq^k + a^{-1}q^{-k})/2 \).
\end{thm}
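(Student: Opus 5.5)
The plan is to expand the ordinary moments in the basis \((x|f)^k\), where the Askey--Wilson functional is explicitly computable, and then push this expansion through the Hankel-determinant formula of \Cref{lem:6}.

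\emph{Step 1: the factorial basis.} Write \(x=(z+z^{-1})/2\). Then
\[
(az,a/z;q)_k=\prod_{j=0}^{k-1}\bigl(1-2aq^j x+a^2q^{2j}\bigr)=(-2a)^k q^{\binom k2}(x|f)^k
\]
with \(f_j=(aq^j+a^{-1}q^{-j})/2\). Hence \((x|f)^k=(-2a)^{-k}q^{-\binom k2}(az,a/z;q)_k\). The Askey--Wilson integral (Askey--Wilson/Andrews form) gives
\[
\LL\bigl((az,a/z;q)_k\,(az,a/z;q)_l\bigr)/\LL(1)\propto \frac{(ab,ac,ad;q)_{k+l}}{(abcd;q)_{k+l}},
\]
up to explicit \(q\)-power and \(a\)-power factors. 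Indeed, \((az,a/z)_k(az,a/z)_l\) reduces to \((az,a/z)_{k+l}\) times a polynomial in the basis \((aq^kz,aq^k/z)\), or one uses directly that \(\LL(x^m)\) is a \({}_4\phi_3\)-type sum. I would first record the one-index case
\[
\LL\bigl((x|f)^m\bigr)=(-2a)^{-m}q^{-\binom m2}\frac{(ab,ac,ad;q)_m}{(abcd;q)_m},
\]
which is standard. Combined with \eqref{eq:xf-exp}, this already gives an explicit formula for \(\sigma^{AW}_m\).

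\emph{Step 2: Cauchy--Binet.} By \Cref{lem:6},
\[
M^{AW}_\lambda=\frac{\det\bigl(\LL(x^{\lambda_i+n-i}\,x^{n-j})\bigr)}{\det\bigl(\LL(x^{n-i}\,x^{n-j})\bigr)}.
\]
In the numerator I expand each row \(x^{\lambda_i+n-i}=\sum_k h_{\lambda_i+n-i-k}(f_0,\dots,f_k)(x|f)^k\) using \eqref{eq:xf-exp}. Cauchy--Binet over strictly decreasing \(k\)-sequences \(k_j=\mu_j+n-j\) then produces
\[
\sum_{\mu} H_{\lambda,\mu}(f)\,\det\bigl(\LL((x|f)^{\mu_i+n-i}x^{n-j})\bigr).
\]
The index bookkeeping matches the definition of \(H_{\lambda,\mu}\), and the terms vanish unless \(\mu\subseteq\lambda\). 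In the inner determinant, and in the denominator, I replace the columns \(x^{n-j}\) by \((x|f)^{n-j}\) by unitriangular column operations, which do not change either determinant. Both determinants then become \(\det\bigl(G(k_i,n-j)\bigr)\) with
\[
G(k,l)=\LL\bigl((x|f)^k(x|f)^l\bigr).
\]

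\emph{Step 3: the determinant evaluation (main obstacle).} Using the explicit value of \(G(k,l)\), I pull out row factors \((-2a)^{-k_i}\) and column factors. The remaining entries have the form
\[
\frac{(ab,ac,ad;q)_{k_i+l}}{(abcd;q)_{k_i+l}}\cdot q^{\text{(bilinear in }k_i,l)} .
\]
I then factor \((ab;q)_{k+l}=(ab;q)_l\,(abq^l;q)_k\) and similarly for the other Pochhammers, and evaluate the resulting determinant with a Krattenthaler-type lemma, \(\det\bigl(\prod (X_i+A_j)\cdots\bigr)\). The expected outcome is that the ratio against the \(\mu=\emptyset\) case equals
\[
(-2a)^{-|\mu|}\,s_\mu(1,q,\dots,q^{n-1})\prod_{i=1}^n q^{\binom{n-i}2-\binom{\mu_i+n-i}2}\frac{(abq^{n-i},acq^{n-i},adq^{n-i};q)_{\mu_i}}{(abcdq^{2n-i-1};q)_{\mu_i}}.
\]
The factor \(s_\mu(1,\dots,q^{n-1})\), a \(q\)-Vandermonde ratio \(\det(q^{(\mu_j+n-j)(n-i)})/\det(q^{(n-j)(n-i)})\), should arise from the bilinear \(q\)-power part of the entries.

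I expect exactly this determinant, a \(q\)-Hankel determinant in Pochhammer ratios indexed by the general set \(\{k_i\}\), to be the hard step. If a direct evaluation fails, I would first check the claim for \(n=1\) and \(n=2\). I would then prove it by induction via Dodgson condensation, or by interpreting it as \(\LL_{\x_n}\) of the interpolation basis \(\det\bigl((x_i|f)^{\mu_j+n-j}\bigr)/\Delta\) and invoking the known Koornwinder/Okounkov \(BC\)-interpolation integral at \(t=q\).
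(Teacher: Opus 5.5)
Your Step 2 is sound and reproduces the paper's skeleton. For \(k_i=\mu_i+n-i\), the ratio \(\det\bigl(\LL((x|f)^{k_i}x^{n-j})\bigr)/\det\bigl(\LL(x^{n-i}x^{n-j})\bigr)\) is exactly the paper's \(\widetilde M^{AW}_\mu=\det(\widetilde\sigma^{AW}_{\mu_i+n-i,n-j})\). Your Cauchy--Binet expansion is then the paper's identity \(M^{AW}_\lambda=\sum_{\mu\subseteq\lambda}H_{\lambda,\mu}(f)\widetilde M^{AW}_\mu\), which the paper obtains from \(M^{AW}=H\widetilde M^{AW}\).

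\textbf{The gap.} It lies in evaluating \(\widetilde M^{AW}_\mu\), which is where the product formula actually comes from. Your two-index claim in Step 1 is false. The product \((az,a/z;q)_k(az,a/z;q)_l\) contains the factors \(1-aq^jz\) with \(j<\min(k,l)\) twice, and none with \(\max(k,l)\le j<k+l\). So it is not \((az,a/z;q)_{k+l}\) times anything, and multiplying the Askey--Wilson weight by it does not give another Askey--Wilson weight. Concretely, \((x-f_0)^2=(x|f)^2+(f_1-f_0)(x|f)^1\) gives \(G(1,1)=\LL((x|f)^2)+(f_1-f_0)\LL((x|f)^1)\) (with \(\LL(1)=1\)). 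The ratio of the second term to the first is \(-(q-1)(a^2q-1)(1-abcdq)/\bigl((1-abq)(1-acq)(1-adq)\bigr)\), which depends on \(b,c,d\). Hence \(G(k,l)\) is a genuine sum with no bilinear \(q\)-power structure. No row or column factoring turns \(\det(G(k_i,n-j))\) into a Krattenthaler-type determinant. Your fallbacks (Dodgson condensation, \(BC\) interpolation integrals) are not developed into an argument.

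\textbf{The missing idea.} Pair against a factorial basis built from a \emph{different} parameter. Let \(g_j=(bq^j+b^{-1}q^{-j})/2\). Both \((x|f)^l\) and \((x|g)^l\) are monic of degree \(l\), so unitriangular column operations give \(\det(G(k_i,n-j))=\det\bigl(\LL((x|f)^{k_i}(x|g)^{n-j})\bigr)\). Now \((az,a/z;q)_k(bz,b/z;q)_l\,w(x;a,b,c,d)=w(x;aq^k,bq^l,c,d)\), where \(w\) is the Askey--Wilson weight. The Askey--Wilson integral then gives
\[
\frac{\LL\bigl((az,a/z;q)_k(bz,b/z;q)_l\bigr)}{\LL(1)}=\frac{(ab;q)_{k+l}\,(ac,ad;q)_k\,(bc,bd;q)_l}{(abcd;q)_{k+l}}.
\]
Pull out the row factors \((-2a)^{-k_i}q^{-\binom{k_i}{2}}(ab,ac,ad;q)_{k_i}/(abcd;q)_{k_i+n-1}\) and the column factors. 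What remains is \(\det\bigl((abq^{k_i};q)_{n-j}\,(abcdq^{k_i+n-j};q)_{j-1}\bigr)\). Krattenthaler's lemma with \(X_i=q^{k_i}\) evaluates this as \(\prod_{i<j}(q^{k_i}-q^{k_j})\) times a factor independent of the \(k_i\). Dividing by the case \(k_i=n-i\) gives exactly the claimed summand, and \(s_\mu(1,q,\dots,q^{n-1})\) comes from the Vandermonde in the \(q^{k_i}\).

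Repaired this way, your route is a self-contained alternative that needs only the Askey--Wilson integral and a standard determinant lemma. The paper instead evaluates \(\widetilde M^{AW}_\mu\) by quoting the relations of \(\widetilde\sigma^{AW}\) to big and little \(q\)-Jacobi mixed moments, together with the \(q\)-Selberg-based product formula \eqref{thm:LHT_M_N}. That is shorter but rests on heavier external input.
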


To prove \Cref{prop:explicit_formula}, we recall three ingredients.
The first is a product formula for multivariate little \(q\)-Jacobi
moments. The monic \emph{little \( q \)-Jacobi polynomials} \( P_n^L(x) \) and
the monic \emph{big \( q \)-Jacobi polynomials}
\( P_n^B(x) \) are defined by
\begin{align*}
  P_n^L(x)&=P_n^L(x; a, b; q) = \frac{(aq; q)_n}{(-1)^n q^{- \binom{n}{2}}(abq^{n+1};q)_n}
                      \qHyper{2}{1}{q^{-n}, abq^{n+1}}{aq}{q, qx}, \\
  P_n^B(x)&=P_n^B(x; a, b, c; q) =
                         \frac{(aq, cq; q)_n}{(abq^{n+1};q)_n}
                         \qHyper{3}{2}{q^{-n}, abq^{n+1}, x}{aq, cq}{q, q}.
\end{align*}
We refer the reader to \cite{GR} for the definition of
\( q \)-hypergeometric series. We note that both little and big
\( q \)-Jacobi polynomials are classical families of orthogonal
polynomials; see \cite{KLS} for a comprehensive treatment.
Let \( \sigma_{n, k}^L = \sigma_{n, k}^L(a,b;q) \) denote the mixed moment of the little \( q \)-Jacobi polynomials:
\[
  x^n = \sum_{k=0}^n \sigma_{n, k}^LP_k^L(x).
\]
Using the \( q \)-Selberg integral,
Corteel and Kim~\cite[Theorem~5.1]{LHT}
showed the following product formula for
the multivariate moment of little \( q \)-Jacobi polynomials:
for \( \lambda\in\Par_n \),
\begin{equation}\label{thm:LHT_M_N}
 \det\left(\sigma^L_{\lambda_i+n-i, n-j}\right)_{i, j=1}^n= s_\lambda(1, q ,\dots, q^{n-1})
  \prod_{i=1}^n \frac{\left(a q^{n-i+1}; q\right)_{\lambda_i}}
  {\left(a b q^{2 n-i+1}; q\right)_{\lambda_i}}.
\end{equation}

The second ingredient is the following result from
\cite[Propositions~4.17 and 4.21]{Corteel2023}.
\begin{prop}\label{thm:L_B_AW} 
  Let \( \widetilde{\sigma}_{n, k}^B = \widetilde{\sigma}_{n, k}^B(a, b, c ; q) \) and
  \( \widetilde{\sigma}_{n, k}^{AW}=\widetilde{\sigma}_{n, k}^{A W}(a, b, c, d ; q) \) be defined by
  \begin{align*}
    \prod_{i=0}^{n-1}(x - q^{-i}) = \sum_{k=0}^n \widetilde{\sigma}_{n, k}^BP_k^B(x), \qquad
    (x|f)^n =  \sum_{k=0}^n \widetilde{\sigma}_{n, k}^{AW}P_k^{AW}(x),
  \end{align*}
    where \( f = (f_0,f_1,\dots) \) is the sequence given by \( f_k = (aq^k + a^{-1}q^{-k})/2 \).
  Then we have
  \begin{align}
    \label{eq: tilde B}\widetilde{\sigma}_{n, k}^B(a, b, c ; q)&=(-1)^{n-k} q^{\binom{k}{2}-\binom{n}{2}}\left(c q^{k+1} ; q\right)_{n-k} \sigma_{n, k}^L(a, b ; q), \\
    \label{eq: tilde AW}\widetilde{\sigma}_{n, k}^{A W}(a, b, c, d ; q) & =(2 a)^{k-n}\left(a d q^k ; q\right)_{n-k} \widetilde{\sigma}_{n, k}^B(a b / q, c d / q, a c / q ; q).
  \end{align}
\end{prop}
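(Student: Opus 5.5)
The plan is to compute every connection coefficient in closed product form and then compare the closed forms. Both polynomial families are terminating basic hypergeometric series in a natural "shifted factorial" basis, and each of the two expansions in the statement is written in exactly that basis. So in each case the problem is to invert a triangular matrix whose entries are products.

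\emph{Step 1 (the bases).} I would first rewrite the left-hand bases. For the big $q$-Jacobi case,
\[
\prod_{i=0}^{n-1}(x-q^{-i})=(-1)^n q^{-\binom n2}(x;q)_n ,
\]
and $P^B_k(x)$ is, by its ${}_3\phi_2$ definition, a linear combination of the $(x;q)_j$ with $j\le k$. For the Askey--Wilson case, put $x=(z+z^{-1})/2$. Then
\[
(1-aq^kz)(1-aq^k/z)=-2aq^k(x-f_k),
\]
so $(x|f)^n=(-2a)^{-n}q^{-\binom n2}(az,a/z;q)_n$. This is exactly the basis in which $P^{AW}_k$ is a balanced ${}_4\phi_3$ in its standard representation, normalised to be monic as in \Cref{def:monic-AW}. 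Likewise, $P^L_k$ is a combination of the monomials $(qx)^j$.

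\emph{Step 2 (invert the triangular matrices).} In each of the three cases, write $P_k=\sum_{j\le k}c_{k,j}\,e_j$, where $e_j$ is the relevant basis and $c_{k,j}$ is an explicit product read off from the hypergeometric term. I would then guess the inverse matrix $d_{n,k}$ in product form and check that $\sum_{k=j}^{n}d_{n,k}c_{k,j}=\delta_{n,j}$. After reindexing $k=j+m$, this sum becomes a terminating balanced series.
\begin{itemize}
\item In the little $q$-Jacobi case it is a $q$-Chu--Vandermonde sum; this yields the classical explicit formula for $\sigma^L_{n,k}(a,b;q)$.
\item In the big $q$-Jacobi case it is a balanced ${}_3\phi_2$, which is evaluated by the $q$-Pfaff--Saalsch\"utz formula.
\item In the Askey--Wilson case the $(az,a/z)$-basis again gives a balanced series of the same type, because the parameter $d$ enters only through a factor that can be pulled out.
\end{itemize}
Everything here is standard material from \cite{GR}.

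\emph{Step 3 (compare).} With closed forms for $\sigma^L_{n,k}$, $\widetilde\sigma^B_{n,k}$ and $\widetilde\sigma^{AW}_{n,k}$ in hand, the identities \eqref{eq: tilde B} and \eqref{eq: tilde AW} reduce to checking ratios of $q$-Pochhammer symbols.
\begin{itemize}
\item For \eqref{eq: tilde B}, the extra parameter $c$ only contributes $(cq^{k+1};q)_{n-k}$. The sign and power $(-1)^{n-k}q^{\binom k2-\binom n2}$ come from the rescalings between $(x;q)_n$ and $(qx)^n$ and from the monic normalisations.
\item For \eqref{eq: tilde AW}, one substitutes $(a,b,c)\mapsto(ab/q,cd/q,ac/q)$ and matches the factor $(2a)^{k-n}$ from Step 1. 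The remaining discrepancy should be exactly $(adq^k;q)_{n-k}$.
\end{itemize}

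\emph{Main obstacle.} The hardest step is the Askey--Wilson inversion in Step 2. One must see that, after the monic normalisation, the ${}_4\phi_3$ coefficients of $P^{AW}_k$ in the $(az,a/z;q)_j$ basis reorganise so that the inversion sum is a Saalsch\"utzian series with parameters matching the big $q$-Jacobi ones at $(ab/q,cd/q,ac/q)$.

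If a direct identification is messy, I would fall back on a two-stage argument. First, verify that both sides of \eqref{eq: tilde AW} satisfy the same triangular recursion in $n$. This recursion comes from multiplying by $x-f_n$ and using the three-term recurrence of $P^{AW}$ on the one side and of $P^B$ on the other. Second, check that the initial values agree.
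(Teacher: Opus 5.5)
Your outline is correct, but it is not the paper's route. The paper gives no proof of this proposition; it imports it from \cite[Propositions~4.17 and 4.21]{Corteel2023}.

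As a self-contained replacement your plan works, but Step~2 is unnecessary, and with it the ``main obstacle'' and the recursion fallback. Write $c^L_{n,j}$, $c^B_{n,j}$, $c^{AW}_{n,j}$ for the coefficients of $x^j$, $(x;q)_j$, $(az,a/z;q)_j$ in $P^L_n$, $P^B_n$, $P^{AW}_n$, respectively. Read them off the ${}_2\phi_1$, ${}_3\phi_2$, ${}_4\phi_3$ with the monic normalisations. The summands share the factor $(q^{-n},abq^{n+1};q)_jq^j/(aq,q;q)_j$, resp.\ $(q^{-n},abcdq^{n-1};q)_jq^j/(ab,ac,q;q)_j$. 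Hence
\[
c^{B}_{n,j}(a,b,c)=\frac{(-1)^n(cq;q)_n}{q^{\binom n2}(cq;q)_j}\,c^{L}_{n,j}(a,b),
\qquad
c^{AW}_{n,j}(a,b,c,d)=\frac{(ad;q)_n}{(2a)^n(ad;q)_j}\,c^{B}_{n,j}(ab/q,cd/q,ac/q).
\]
So the triangular coefficient matrices differ only by left and right diagonal scalings. Their inverses therefore differ by the inverse scalings, and the inverse of the little $q$-Jacobi matrix is $(\sigma^L_{n,k})$ by definition. Combining this with your Step~1 conversions gives \eqref{eq: tilde B} and \eqref{eq: tilde AW} in one line each. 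No summation theorem and no closed form for $\sigma^L_{n,k}$ are needed. This is the same diagonal bookkeeping the paper uses for minors in \eqref{eq:9}.

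The citation buys brevity; this argument buys a self-contained proof from the standard hypergeometric representations. For that you should record that the recurrence in \Cref{def:monic-AW} defines the standard monic Askey--Wilson polynomials.

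If you keep the guess-and-verify version, two details in Step~2 are off.
\begin{itemize}
\item In the order you wrote, $\sum_k d_{n,k}c_{k,j}$, the little $q$-Jacobi check contains the factor $1-abq^{2k+1}$ and is a very-well-poised series, not a balanced one. It is the other product, $\sum_k c_{n,k}d_{k,j}$, that reduces to ${}_2\phi_1(q^{-(n-j)},abq^{n+j+1};abq^{2j+2};q,q)$ and vanishes for $n>j$ by $q$-Chu--Vandermonde.
\item Once the correct guesses are inserted, the factors $(cq;q)_k$ and $(ad;q)_k$ cancel. The big $q$-Jacobi and Askey--Wilson checks then collapse to that same little $q$-Jacobi sum, with $(a,b)\mapsto(ab/q,cd/q)$ in the latter case, rather than needing a genuine Pfaff--Saalsch\"utz evaluation.
\end{itemize}
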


The final ingredient is the Cauchy--Binet formula.

\begin{lem}[Cauchy--Binet] \label{lem:CB}
  Let \( A \) be an \( i \times k \) matrix and let \( B \) be a \( k \times j \) matrix.
  For subsets \(I\subseteq\{1,\dots,i\}\) and
  \(J\subseteq\{1,\dots,j\}\) with \(|I|=|J|=m\le k\), we have
  \[
    [AB]_{I, J} = \sum_{K} [A]_{I, K}[B]_{K, J},
  \]
  where the sum is over all subsets \( K \) of \( \{1 ,\dots, k\} \) of size \( m \).
\end{lem}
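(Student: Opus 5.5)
We reduce the statement to the case of square products and then expand by multilinearity. Let \( C \) be the \( m\times k \) submatrix of \( A \) formed by the rows indexed by \( I \), and let \( D \) be the \( k\times m \) submatrix of \( B \) formed by the columns indexed by \( J \). Row \( r \) of \( AB \), restricted to columns in \( J \), is (row \( r \) of \( A \)) times \( D \). Hence the submatrix of \( AB \) with row set \( I \) and column set \( J \) equals \( CD \), so \( [AB]_{I,J}=\det(CD) \). Likewise \( [A]_{I,K}=\det(C_{\cdot,K}) \) and \( [B]_{K,J}=\det(D_{K,\cdot}) \). It therefore suffices to prove
\[
  \det(CD)=\sum_{K}\det(C_{\cdot,K})\det(D_{K,\cdot}),
\]
where \( K \) runs over all \( m \)-subsets of \( \{1,\dots,k\} \).

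Write \( c_1,\dots,c_k \) for the columns of \( C \). The \( j \)-th column of \( CD \) is \( \sum_{l=1}^k D_{l,j}\,c_l \). Expanding the determinant multilinearly in its \( m \) columns gives
\[
  \det(CD)=\sum_{\phi}\Bigl(\prod_{j=1}^m D_{\phi(j),j}\Bigr)\det\bigl(c_{\phi(1)},\dots,c_{\phi(m)}\bigr),
\]
where the sum is over all maps \( \phi:\{1,\dots,m\}\to\{1,\dots,k\} \). If \( \phi \) is not injective, two columns coincide and the corresponding term vanishes. So only injective \( \phi \) contribute.

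We group the injective maps by their image \( K=\{\kappa_1<\dots<\kappa_m\} \). Each such \( \phi \) can be written uniquely as \( \phi(j)=\kappa_{\pi(j)} \) for a permutation \( \pi\in S_m \). Reordering the columns then gives
\[
  \det\bigl(c_{\phi(1)},\dots,c_{\phi(m)}\bigr)=\operatorname{sgn}(\pi)\det(C_{\cdot,K}).
\]
Consequently the total contribution of the image \( K \) is
\[
  \det(C_{\cdot,K})\sum_{\pi\in S_m}\operatorname{sgn}(\pi)\prod_{j=1}^m D_{\kappa_{\pi(j)},j}.
\]
By the Leibniz formula, the sum over \( \pi \) is exactly \( \det(D_{K,\cdot}) \). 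Summing over all \( K \) yields the identity. The hypothesis \( m\le k \) only guarantees that \( m \)-subsets \( K \) exist; if there were none, the argument above would show that both sides vanish.

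The only delicate point is the sign bookkeeping in the last step. One must reorder the columns \( c_{\phi(j)} \) into increasing order of their indices, and the resulting sign must be matched with the Leibniz sign for \( D_{K,\cdot} \). Both signs are \( \operatorname{sgn}(\pi) \) for the same permutation \( \pi \), so they agree and no further checking is needed.
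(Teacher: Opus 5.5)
Your proof is correct and complete. The reduction $[AB]_{I,J}=\det(CD)$, the multilinear expansion over all maps $\phi$, the vanishing of non-injective terms, and the matching of the column-reordering sign with the Leibniz sign for $D_{K,\cdot}$ are all right.

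The paper gives no proof of this lemma; it quotes it as the classical Cauchy--Binet formula, and your argument is the standard textbook proof. One remark is worth adding. The paper actually applies the lemma to the infinite matrices $H$ and $\widetilde{M}^{AW}$, indexed by $s,t\ge 0$. This is legitimate because $H$ is lower triangular: for a row set $I\subseteq\{0,\dots,N\}$ you can truncate to the finite block with indices at most $N$ and then invoke your finite version.
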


We now combine these ingredients to prove \Cref{prop:explicit_formula}.

\begin{proof}[Proof of \Cref{prop:explicit_formula}]
  We use the notation in \Cref{thm:L_B_AW}. Let
  \begin{align*}
    M^{L} &= M^{L}(a,b;q) = (\sigma_{s, t}^L(a,b;q))_{s,t\ge0},\\
    \widetilde{M}^{B} &=  \widetilde{M}^{B}(a,b,c;q) = (\widetilde{\sigma}_{s, t}^B(a,b,c;q))_{s,t\ge0},\\
    \widetilde{M}^{AW} &= \widetilde{M}^{AW}(a,b,c,d;q) = (\widetilde{\sigma}_{s, t}^{AW}(a,b,c,d;q))_{s,t\ge0}.
  \end{align*}
  For finite sets \( I \) and \( J \) of nonnegative integers with
  \( |I|=|J| \), we have
\begin{align*}
  [\widetilde{M}^B]_{I, J}
  &= \det(\widetilde{\sigma}_{i, j}^{B})_{i \in I, j \in J} \\
  &= \det\left((-1)^{i-j} q^{\binom{j}{2}-\binom{i}{2}}\left(c q^{j+1} ; q\right)_{i-j} \sigma_{i, j}^L\right)_{i \in I, j \in J}  \qquad \text{(by \eqref{eq: tilde B})} \\
  &= \det \left( \frac{(-1)^iq^{-\binom{i}{2}} (cq; q)_{i}}
    {(-1)^j q^{-\binom{j}{2}}(cq; q)_{j}}
    \sigma_{i, j}^L \right)_{i \in I, j \in J} \\
  &= \frac{\prod_{i \in I}(-1)^iq^{-\binom{i}{2}} (cq; q)_{i}}
    {\prod_{j \in J}(-1)^j q^{-\binom{j}{2}}(cq; q)_{j}}
    [M^L]_{I, J}.
\end{align*}
Similarly,  using \eqref{eq: tilde AW} in place of \eqref{eq: tilde B}, we have
\begin{align*}
  [\widetilde{M}^{AW}]_{I, J}
  &= \frac{\prod_{i \in I}(2a)^{-i} (ad; q)_{i}}
    {\prod_{j \in J}(2a)^{-j} (ad; q)_{j}} 
    [\widetilde{M}^B(a b / q, c d / q, a c / q ; q)]_{I, J}.
\end{align*}
Combining the above two equations, we obtain
\begin{equation}\label{eq:9}
  [\widetilde{M}^{AW}]_{I, J} = \frac{\prod_{i \in I}q^{-\binom{i}{2}}(-2a)^{-i}(ac; q)_{i} (ad; q)_{i}}
  {\prod_{j \in J} q^{-\binom{j}{2}}(-2a)^{-j}(ac; q)_{j} (ad; q)_{j}}
  [M^L(a b / q, c d / q ; q)]_{I, J}.
\end{equation}

Let \( \widetilde M^{AW}_\lambda = \det\left(
\widetilde\sigma^{AW}_{\lambda_i+n-i,n-j} \right)_{i,j=1}^n \).
Applying \eqref{eq:9} with \(I=\{\lambda_i+n-i : 1 \le i \le n\}\)
and \(J=\{n-i : 1 \le i \le n\}\), and then
applying \eqref{thm:LHT_M_N} with the parameters \( (a,b) \)
replaced by \((ab/q,cd/q)\), we obtain
\begin{equation}\label{eq:8}
  \widetilde{M}^{AW}_{\lambda}\\
  = s_\lambda(1, q, \dots, q^{n-1})
    \prod_{i=1}^n (-2a)^{-\lambda_i} q^{\binom{n-i}{2}-\binom{\lambda_i+n-i}{2}} \frac
    {(ab q^{n-i},ac q^{n-i},ad q^{n-i};q)_{\lambda_i}}
    {\left(abcd q^{2 n-i-1};q \right)_{\lambda_i}} .
\end{equation}
From the definition of \(\widetilde{\sigma}_{s, k}^{AW}\), we have
\((x|f)^s = \sum_{k=0}^s \widetilde{\sigma}_{s, k}^{AW}P_k^{AW}(x)\), and
by \eqref{eq:xf-exp} we get
\[
  x^s = \sum_{t=0}^s\sum_{k=0}^t
  h_{s-t}(f_0 ,\dots, f_t) \widetilde{\sigma}_{t, k}^{AW}P_k^{AW}(x).
\]
Thus, the mixed moment \( \sigma^{AW}_{s, k} \) of the OPS \( \{P_k^{AW}(x)\} \) is given by
\[
  \sigma^{AW}_{s, k} = \sum_{t=0}^s h_{s-t}(f_0 ,\dots, f_t) \widetilde{\sigma}_{t, k}^{AW}.
\]
Rewriting this formula in terms of matrices, we obtain
\( M^{AW}=H \widetilde{M}^{AW} \), where
\( M^{AW} = (\sigma_{s, t}^{AW})_{s,t\ge0} \) and
\( H = \bigl(h_{s-t}(f_0,\dots,f_t)\bigr)_{s,t\ge0} \). Then, by
\Cref{lem:CB}, we have
\[
  M_{\lambda}^{AW}
  = \sum_{\mu \subseteq \lambda} H_{\lambda , \mu}(f) \widetilde{M}_{\mu}^{AW}.
\]
This, together with \eqref{eq:8}, completes the proof.
\end{proof}

Using \Cref{prop:explicit_formula}, we obtain an explicit formula for \( M^{Z}_\lambda \).

\begin{thm} \label{cor:explicit_Koornwinder_moment}
  For a partition \( \lambda \in \Par_n \),
  we have
  \begin{align*}
    M^{Z}_{\lambda}
    &= \left( \frac{2\sqrt{\xi}}{1-q} \right)^{\left| \lambda \right|}
      \sum_{\mu \subseteq \lambda}
      \left(\frac{-2a}{\sqrt{\xi}}\right)^{-|\mu|} H_{\lambda , \mu}(\bar{f})
    s_\mu(1, q, \dots, q^{n-1}) \\
      &\quad \times \prod_{i=1}^n q^{\binom{n-i}{2}-\binom{\mu_i+n-i}{2}} \frac
      {(ab q^{n-i},ac\xi^{-1}q^{n-i},ad q^{n-i};q)_{\mu_i}}
      {\left(abcd q^{2 n-i-1};q \right)_{\mu_i}},
  \end{align*}
  where \( \bar{f} = (\bar{f}_0,\bar{f}_1,\dots) \) is the sequence given by 
  \[
    \bar{f}_k =
    \frac{a q^{k}+a^{-1}q^{-k}\xi + 1 + \xi}{2\sqrt{\xi}}
  \]
  and the parameters \( (a, b, c, d) \) are substituted by
  \( (\alpha,\beta,\gamma,\delta) \) as in \Cref{def:alpha}.
\end{thm}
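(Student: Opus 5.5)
The plan is to deduce \Cref{cor:explicit_Koornwinder_moment} from the proof of \Cref{prop:explicit_formula}, not from its statement alone. The key is the affine relation \eqref{eq:monic_Z}. Set
\[
  c=\frac{2\sqrt\xi}{1-q},\qquad e=\frac{1+\xi}{1-q},\qquad y=\frac{x-e}{c},
\]
\[
  (a',b',c',d')=\bigl(a/\sqrt\xi,\ b\sqrt\xi,\ c/\sqrt\xi,\ d\sqrt\xi\bigr).
\]
Then \eqref{eq:monic_Z} reads \(P^Z_k(x)=c^kP^{AW}_k(y;a',b',c',d'|q)\). I will rerun the argument of \Cref{prop:explicit_formula} directly in the variable \(x\), so that no separate rescaling lemma is needed.

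First, let \(f'_k=(a'q^k+a'^{-1}q^{-k})/2\) be the sequence of \Cref{thm:L_B_AW} for the primed parameters, and define \(g_k=cf'_k+e\). Since \(x-g_k=c(y-f'_k)\), we have \((x|g)^t=c^t(y|f')^t\). Expanding \((y|f')^t\) by the definition of \(\widetilde\sigma^{AW}_{t,k}\) and converting \(P^{AW}_k(y)\) back to \(P^Z_k(x)\) gives
\[
  (x|g)^t=\sum_k c^{t-k}\,\widetilde\sigma^{AW}_{t,k}(a',b',c',d';q)\,P^Z_k(x).
\]
Combining this with \eqref{eq:xf-exp} for the sequence \(g\) expresses \(x^s\) in the basis \(P^Z_k(x)\). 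In matrix form this is \(M^Z=H^{(g)}\widetilde M^Z\), where \(H^{(g)}=(h_{s-t}(g_0,\dots,g_t))_{s,t}\) and \(\widetilde M^Z=(c^{t-k}\widetilde\sigma^{AW}_{t,k})_{t,k}\). \Cref{lem:CB} then gives
\[
  M^Z_\lambda=\sum_{\mu\subseteq\lambda}H_{\lambda,\mu}(g)\,\widetilde M^Z_\mu .
\]

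Next come the two scaling computations. Pulling the factor \(c^{\mu_i+n-i}\) out of row \(i\) and \(c^{-(n-j)}\) out of column \(j\) shows \(\widetilde M^Z_\mu=c^{|\mu|}\,\widetilde M^{AW}_\mu(a',b',c',d')\). The latter is given by \eqref{eq:8} with primed parameters. A direct computation gives
\[
  g_k=\frac{aq^k+\xi a^{-1}q^{-k}+1+\xi}{1-q}=c\,\bar f_k .
\]
Since \(h_m\) is homogeneous of degree \(m\), we get \(h_{\lambda_i-\mu_j+j-i}(g)=c^{\lambda_i-\mu_j+j-i}h(\bar f)\). The factors \(c^{j-i}\) cancel in the determinant, so \(H_{\lambda,\mu}(g)=c^{|\lambda|-|\mu|}H_{\lambda,\mu}(\bar f)\). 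The two powers of \(c\) therefore combine to the global factor \(c^{|\lambda|}\), as claimed.

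Finally, I substitute the primed parameters into \eqref{eq:8}. We have \(a'b'=ab\), \(a'd'=ad\), \(a'c'=ac/\xi\) and \(a'b'c'd'=abcd\), while \(-2a'=-2a/\sqrt\xi\). This yields exactly the factors in the statement. Rewriting everything via \Cref{def:alpha} gives the expression in \((\alpha,\beta,\gamma,\delta)\).

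The main obstacle is bookkeeping rather than any conceptual step. I must check that the column scalings \(c^{-(n-j)}\) in \(\widetilde M^Z_\mu\) and the \(c^{j-i}\) in \(H\) really cancel. I must also make sure that \(s_\mu(1,\dots,q^{n-1})\) and the \(q\)-power prefactor are untouched by the change of parameters. Finally, the branch of \(\sqrt\xi\) has to be used consistently, since it appears only through \(\bar f\) and the \((-2a/\sqrt\xi)^{-|\mu|}\) factor.
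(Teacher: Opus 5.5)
Your proposal is correct and follows essentially the paper's route: you read \eqref{eq:monic_Z} as an affine change of variables, expand $x^s$ through $(x|\cdot)^t$ via \eqref{eq:xf-exp} and the $\widetilde\sigma^{AW}$ expansion, apply Cauchy--Binet, and insert \eqref{eq:8} with the primed parameters. The only difference is bookkeeping. The paper keeps the unscaled shifted sequence $f+r$, which becomes $\bar f$ after the substitution, and extracts $C^{n-k}$ directly from $\sigma^{Q}_{n,k}$. You instead absorb the scale into $g=c\bar f$ and recover $c^{|\lambda|-|\mu|}\cdot c^{|\mu|}$ by homogeneity, which checks out; you should, however, rename the scale constant, since $c$ is already an Askey--Wilson parameter.
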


\begin{proof}
Let
  \[
    C=\frac{2\sqrt{\xi}}{1-q},
    \qquad
    r=\frac{1+\xi}{2\sqrt{\xi}},
    \qquad 
    Q_n(x) = C^n P_n^{AW}\left(\frac{x}{C}-r \right).
  \]
  Let \( \sigma^Q_{n,k} \) and \( M^Q_{\lambda} \) be the mixed moment
  and the multivariate moment of the OPS \( \{Q_n(x)\}_{n\ge0} \),
  respectively. Then, by \eqref{eq:monic_Z} we have
  \[
    P_{n}^{Z}(x)=Q_n(x)|_{(a,b,c,d) = (a/\sqrt{\xi},b\sqrt{\xi},c/\sqrt{\xi},d\sqrt{\xi})},
  \]
  and therefore,
  \begin{equation}\label{eq:20}
    M^Z_\lambda=M^Q_\lambda|_{(a,b,c,d) =
      (a/\sqrt{\xi},b\sqrt{\xi},c/\sqrt{\xi},d\sqrt{\xi})}.
  \end{equation}

  Let \( f = (f_0,f_1,\dots) \) be the sequence given by
  \( f_k = (aq^k + a^{-1}q^{-k})/2 \) and let
  \( f+r = (f_0+r,f_1+r,\dots) \). Since \( (x+r|f+r)^s = (x|f)^s \),
  using the definition of \(\widetilde{\sigma}_{s, k}^{AW}\) in
  \Cref{thm:L_B_AW}, we have
  \[
    (x+r|f+r)^s = (x|f)^s = \sum_{k=0}^s \widetilde{\sigma}_{s, k}^{AW}
    P_k^{AW}(x).
  \]
  By \eqref{eq:xf-exp} with \( x \) and \( f \) replaced by \( x+r \)
  and \( f+r \), respectively, we have
  \begin{align*}
      (x+r)^n &= \sum_{s=0}^n h_{n-s}(f_0+r ,\dots, f_s+r) (x+r|f+r)^s\\
    &= \sum_{s=0}^n\sum_{k=0}^s
    h_{n-s}(f_0+r ,\dots, f_s+r) \widetilde{\sigma}_{s, k}^{AW}P_k^{AW}(x).
  \end{align*}
  Replacing \( x \) by \( \frac{x}{C} -r \), we obtain
  \[
    \frac{x^n}{C^n} = \sum_{s=0}^n\sum_{k=0}^s
    h_{n-s}(f_0+r ,\dots, f_s+r) \widetilde{\sigma}_{s, k}^{AW}
    P_k^{AW}\left(\frac{x}{C}-r\right).
  \]
  Hence,
\[
    x^n = \sum_{s=0}^n\sum_{k=0}^s
    h_{n-s}(f_0+r ,\dots, f_s+r) \widetilde{\sigma}_{s, k}^{AW}
    C^{n-k}Q_k(x),
  \]
  which implies
  \[
    \sigma^Q_{n,k} = C^{n-k} \sum_{s=k}^n
    h_{n-s}(f_0+r ,\dots, f_s+r) \widetilde{\sigma}_{s, k}^{AW}.
  \]
  By the same argument in the proof of \Cref{prop:explicit_formula}
  with the above identity, we obtain
  \begin{align*} 
    M_\lambda^Q
    &= C^{|\lambda|}\sum_{\mu \subseteq \lambda} (-2a)^{-|\mu|} H_{\lambda , \mu}(f+r)
    s_\mu(1, q, \dots, q^{n-1}) \\
    & \quad\times \prod_{i=1}^n q^{\binom{n-i}{2}-\binom{\mu_i+n-i}{2}} \frac
    {(ab q^{n-i},ac q^{n-i},ad q^{n-i};q)_{\mu_i}}
    {\left(abcd q^{2 n-i-1};q \right)_{\mu_i}}.
  \end{align*}
  Finally, by \eqref{eq:20}, we obtain the desired formula for
  \( M_{\lambda}^Z \).
\end{proof}

By \Cref{def:alpha}, we have \( abcd = (\gamma\delta) / (\alpha\beta) \)
and \( 1-abcdq^i = (\alpha\beta - q^i\gamma\delta) / (\alpha\beta) \).
Therefore, as an immediate consequence of \Cref{cor:explicit_Koornwinder_moment},
we obtain a numerator of \( M^{Z}_\lambda \).

\begin{cor}\label{prop:deno}
  For a partition \( \lambda \in \Par_n \), the following is a polynomial:
  \begin{equation} \label{eq:deno}
   M^{Z}_\lambda \prod_{(i, j) \in \lambda}(\alpha\beta - q^{2n-2-i+j}\gamma\delta).
  \end{equation}
\end{cor}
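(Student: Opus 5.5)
We read the identity of \Cref{cor:explicit_Koornwinder_moment} off term by term. For a summand indexed by \(\mu\subseteq\lambda\), the only factors of the form \(1-abcd\,q^m\) occur in the denominator
\[
  \prod_{i=1}^n (abcd\,q^{2n-i-1};q)_{\mu_i}
  = \prod_{i=1}^n\prod_{j=1}^{\mu_i}\bigl(1-abcd\,q^{2n-2-i+j}\bigr).
\]
So the factors are indexed by the cells \((i,j)\in\mu\), and the exponent attached to \((i,j)\) is \(2n-2-i+j\). Since \(\mu\subseteq\lambda\), this multiset of exponents is contained in \(\{2n-2-i+j:(i,j)\in\lambda\}\). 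By \(1-abcd\,q^m=(\alpha\beta-q^m\gamma\delta)/(\alpha\beta)\), multiplying by \(D:=\prod_{(i,j)\in\lambda}(\alpha\beta-q^{2n-2-i+j}\gamma\delta)\) therefore cancels every such denominator in every summand. What remains is a power of \(\alpha\beta\) times expressions built from \(a^{\pm1},b,c,d,\sqrt{\xi}^{\pm1},(1-q)^{-1}\) and \(q^{\pm1}\).

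The remaining expressions are algebraic rather than polynomial in \(\xi,\alpha,\beta,\gamma,\delta,q\), so cancelling denominators does not finish the proof. I would close the gap with an analytic argument. By \eqref{eq:10} and \Cref{lem:3}, \(F:=D\,M^Z_\lambda\) is a rational function of \(\xi,\alpha,\beta,\gamma,\delta,q\). Its reduced denominator is a product of factors \(\alpha\beta-q^k\gamma\delta\), and each of these is irreducible. If \(F\) were not a polynomial, some \(p=\alpha\beta-q^k\gamma\delta\) would divide its reduced denominator. Then \(F\) would be unbounded near a generic point of the hypersurface \(\{p=0\}\).

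Take such a generic point, with \(\alpha\beta\gamma\delta\xi\neq0\), \(q\neq0,1\), and both discriminants \((1-q-\alpha+\gamma)^2+4\alpha\gamma\) and \((1-q-\beta+\delta)^2+4\beta\delta\) nonzero. Near this point we fix analytic branches of the square roots in \Cref{def:alpha} and of \(\sqrt\xi\), so that \(a,b,c,d,a^{-1}\) are analytic. The identity of \Cref{cor:explicit_Koornwinder_moment} then holds there as an identity of analytic functions. The analytic continuation of this identity, and its independence of the branch choice, is the step I would check most carefully. It should follow because \(M^Z_\lambda\) is defined through the recurrence coefficients \(b^Z_n,\lambda^Z_n\). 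These are rational in the ASEP parameters by \Cref{prop:reparametrization}, so any branch gives the same left-hand side.

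After multiplication by \(D\), each summand on the right-hand side is a finite sum of products of functions analytic near the point. Indeed, \(H_{\lambda,\mu}(\bar f)\) is a polynomial in the \(\bar f_k\). The \(\bar f_k\) involve only \(a^{\pm1}\), \(q^{\pm k}\) and \(\sqrt\xi^{\pm1}\). The remaining Pochhammer symbols are polynomials in \(a,b,c,d,q,\xi^{-1}\), and the powers of \(\alpha\beta\) and \((1-q)^{-1}\) are harmless. Hence \(F\) is bounded near a generic point of \(\{p=0\}\), which is a contradiction, and \(F\) is a polynomial.
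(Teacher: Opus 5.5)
Your proof is correct and follows the paper's route: read off the factors \((abcd\,q^{2n-i-1};q)_{\mu_i}\) in \Cref{cor:explicit_Koornwinder_moment}, identify them with the cells of \(\mu\subseteq\lambda\), and use \(1-abcd\,q^m=(\alpha\beta-q^m\gamma\delta)/(\alpha\beta)\). The paper calls this an immediate consequence. Your extra step makes it rigorous: you use the a priori form of the denominator from \Cref{lem:3} and \eqref{eq:10}, together with a boundedness argument near a generic point of \(\{\alpha\beta=q^k\gamma\delta\}\). This correctly justifies that the algebraic quantities \(a^{\pm1},b,c,d,\sqrt{\xi}^{\pm1}\) cannot create or hide poles along these hypersurfaces.
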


In \Cref{prop:den}, we will show that the product in \eqref{eq:deno} is
the minimal denominator of \( M^{Z}_\lambda \). 
After treating \(M^Z_\lambda\),  it is natural to consider its dual \(N^Z_\lambda\) as well.
We propose the following conjectural formula for its minimal denominator.

\begin{conj}\label{con:N}
  For \( \lambda\in\Par_n \), the minimal denominator of \( N_\lambda^Z \) is given by
  \[
    \prod_{(i, j) \in \lambda} \left(\alpha \beta - q^{\lambda_i - \lambda'_j+2n-2 - i+ j}\gamma \delta\right).
  \]
\end{conj}

This conjecture has been verified by computer for all partitions
\( \lambda \subseteq (4, 4, 4) \) and for several values of \( n \).

\section{Positivity results in two special cases}
\label{sec:proof-special-cases}

In this section, we establish the positivity of certain numerators of
the Koornwinder moments \( M^{Z}_{\lambda,\mu} \) in the
specializations \((\xi,q)=(1,0)\) and \((\xi,q)=(1,1)\). These results
will be used in the next section to prove the generalized Rains'
conjecture in these cases. In \Cref{subsec:q0-xi1}, we treat the
specialization \((\xi,q)=(1,0)\) by constructing a lattice path model
for the coefficients \(\nu^Z_{n,k}\) of the rescaled Askey--Wilson
polynomials. We then apply the Lindstr\"om--Gessel--Viennot (LGV)
lemma to prove the positivity of the numerator of the multivariate
coefficients \( N^{Z}_{\lambda,\mu} \). By duality, we deduce the
positivity of the numerator of \( M^{Z}_{\lambda,\mu} \). In
\Cref{subsec:q1-xi1}, we use mixed moments of Laguerre polynomials to
derive explicit formulas for \( \sigma^{Z}_{n,k} \) and
\( M^{Z}_{\lambda,\mu} \), and then give a combinatorial
interpretation of \( M^{Z}_{\lambda,\mu} \) in terms of lecture hall tableaux.

\subsection{The case \( (\xi,q)=(1,0) \)}
\label{subsec:q0-xi1}

Throughout this subsection, we assume \( (\xi,q)=(1,0) \). For
simplicity, we write
\( \nu^Z_{n,k} = \nu^Z_{n,k}(1;\alpha,\beta,\gamma,\delta;0) \)
and 
\( N^Z_{\lambda,\mu} = N^Z_{\lambda,\mu}(1;\alpha,\beta,\gamma,\delta;0) \).

\begin{figure}
  \centering
  \begin{tikzpicture}[x=0.8cm,y=0.8cm, line width=0.8pt]
  \def\N{4}
  \def\Xcut{7}

  \pgfmathsetmacro{\YtopLine}{2*\N+2}
  \pgfmathsetmacro{\Ytopline}{2*\N}
  \draw[gray,thin,->] (-.7,1) -- (\Xcut+1.3,1) node[below] {$x$};
  \draw[gray,thin,->] (0,0) -- (0,\YtopLine+1) node[left] {$y$};
  \node at (\Xcut-1,\YtopLine+1) {\( \vdots \)}; 
  \foreach \k in {0,...,\N} {
    \pgfmathtruncatemacro{\xL}{\N-\k}
    \pgfmathtruncatemacro{\yBot}{2*(\N-\k)}
    \pgfmathtruncatemacro{\yTop}{2*(\N-\k)+1}
    
    \foreach \j in {0,...,\k} {
      \pgfmathtruncatemacro{\x}{\xL+\j}
      \ifnum\x<\Xcut
        \ifnum\k=\N
          \draw (\x,\yTop) rectangle ++(1,1);
        \else
          \draw (\x,\yTop) rectangle ++(1,1);
          \draw (\x,\yBot) rectangle ++(1,1);
        \fi
        \ifnum\yTop>\Ytopline
          \draw (\N+1,\yBot+1) rectangle ++(1,1);
          \draw (\N+2,\yBot+1) rectangle ++(1,1);
          \draw (\N+1,\yTop+1) -- (\N+1,\yTop+1.3);
          \draw (\N+2,\yTop+1) -- (\N+2,\yTop+1.3);
          \draw (\N+3,\yTop+1) -- (\N+3,\yTop+1.3);
        \else  
          \draw (\N+1,\yTop+1) rectangle ++(1,1);
          \draw (\N+1,\yBot+1) rectangle ++(1,1);
          \draw (\N+2,\yTop+1) rectangle ++(1,1);
          \draw (\N+2,\yBot+1) rectangle ++(1,1);
        \fi
      \fi
    }}
  \foreach \k in {0,...,\Xcut} {
    \ifnum\k<\Xcut
      \draw (\k,\k+1) -- (\k+1,\k);
      \node[blue] at (\k+.35,\k+.35) {\( \delta \)};
      \node[blue] at (\k+0.83,\k+3.5) {\( \alpha \)};
      \node[blue] at (\k+0.83,\k+1.5) {\( \beta \)};
    \fi
    
    \ifnum\k>1
      \node at (\k,1) {\( \bullet \)};
      \node at (\k+.25,0.8) {\( v_{\k} \)};
      \draw (\k-1,\k+2) -- (\k,\k+1);
      \node[blue] at (\k+0.35-1,\k+2.35-1) {\( \gamma \)};
    \fi
    \ifnum\k<\N
      \node at (\k,2*\k+2) {\( \bullet \)};
      \node at (\k-0.25,2*\k+2.3) {\( u_\k \)};
    \fi

    \ifnum\k=0
      \node at (0,0) {\( \bullet \)};
      \node at (0.25,-0.2) {\( v_0 \)};
      \node at (1,0) {\( \bullet \)};
      \node at (1.25,-0.2) {\( v_1 \)};
      \draw (0,2) -- (1,1);
      \node[blue] at (0.35,1.35) {\( \gamma \)};
    \else
      \ifnum\k<7
      \fi
    \fi
  }

  \node[blue] at (0.5,0.85) {\( \alpha \)};
  \node[blue] at (-0.3,0.5) {\( \alpha\beta \)};
  
  \draw (0,1) -- (0,0);
  \draw (1,1) -- (1,0);
  \draw (0,1) -- (1,0);

  \node at (4,1) {\( \bullet \)};
  \node at (4.25,0.8) {\( v_4 \)};
  \draw (4,8) -- (4,10);
  \node at (4,10) {\( \bullet \)};
  \node at (3.75,10.3) {\( u_4 \)};

  \node[right] at (\Xcut+.25,5.0) {\(\cdots\)};
\end{tikzpicture}
\caption{The weighted lattice model \( G \).}
  \label{fig:HJlattice}
\end{figure}

We first define the directed lattice graph used in the path model; see
\Cref{fig:HJlattice} for an illustration. Let \(G=(\mathcal{V},\mathcal{E})\) be the directed
graph with vertex set
\[
\mathcal{V}=\{(0,-1),(1,-1)\}\cup
\{(i,j)\in \mathbb Z_{\ge0}\times \mathbb Z_{\ge0}: 0\le j\le 2i+1\},
\]
and edge set \(\mathcal{E}=\mathcal{E}_{E}\cup \mathcal{E}_S\cup \mathcal{E}_D \), where
\[
\begin{aligned}
\mathcal{E}_E
&=\{\, (i,j)\to(i+1,j)
      : j\ge0,\ (i,j),(i+1,j)\in \mathcal{V} \,\},\\
\mathcal{E}_S
&=\{\, (i,j)\to(i,j-1)
      : (i,j),(i,j-1)\in \mathcal{V} \,\},\\
\mathcal{E}_D
&=\{\, (i,j)\to(i+1,j-1)
      : (i,j)=(0,1),\ \text{or } j=i \text{ with } i\ge0,\\
&\hspace{12.8em}
      \text{or } j=i+2 \text{ with } i\ge1 \,\}.
\end{aligned}
\]
The directed edges of \(G\) are the line segments shown in
\Cref{fig:HJlattice}, oriented from left to right or from top to
bottom. We refer to the elements of \(\mathcal{E}_E\), \(\mathcal{E}_S\), and \(\mathcal{E}_D\) as
\emph{east}, \emph{south}, and \emph{diagonal} steps, respectively.
The \emph{weight} \( \wt(A) \) of a step \( A\in\mathcal{E} \)
with an initial point \((i,j)\) is defined as follows:
\begin{align*}  
  \wt((i,j)\to(i+1,j)) &=
  \begin{cases}
    \alpha & \text{if } (i,j) = (0,0), \\   
    0 & \text{if } (i,j) = (0,-1), \\
    1 & \text{otherwise},
  \end{cases}
  \\
  \wt((i,j)\to(i,j-1)) &= 
  \begin{cases}
    \alpha & \text{if } j=i+2 \text{ with } i\geq1, \\
    \beta & \text{if } j=i \text{ with } i\geq1, \\
    \alpha\beta & \text{if } (i,j) = (0,0), \\
    1 & \text{otherwise},
  \end{cases}
  \\
  \wt((i,j)\to(i+1,j-1)) &= 
  \begin{cases}
    \gamma & \text{if } j=i+2 \text{ with } i\geq1, \text{ or } (i,j)=(0,1), \\
    \delta & \text{if } j=i \text{ with } i\geq0, \\
    0 & \text{otherwise}.
  \end{cases}
\end{align*}
For a path \(p\) in \(G\), its \emph{weight} \(\wt(p)\) is defined to be the
product of the weights of its steps.

Let \(P(u\to v)\) denote the set of (directed) paths from \(u\) to
\(v\). We will mostly consider paths in \(P(u_i\to v_j)\) for
\( i,j\ge0 \), where
\[
u_i=(i,2i+1),\qquad
v_i=
\begin{cases}
(i,-1), & \text{if } i=0,1,\\
(i,0), & \text{if } i\ge2.
\end{cases}
\]

Recall the definitions of \( \nu^Z_{n,k} \) and \( V_{n,k} \) in \Cref{def:GZ} and \eqref{eq:V}.
By \Cref{cor: coe xi=1 q=0}, we have the following explicit formula for
\( \nu^Z_{n,k} = \nu^Z_{n,k}(1;\alpha,\beta,\gamma,\delta;0) \):
\begin{equation}\label{eq:13}
(-1)^{n-k}\nu^Z_{n,k}
= \begin{cases}
 \frac{\alpha + \beta + \gamma + \delta}{\alpha\beta - \gamma \delta} & \mbox{if \( (n,k)=(1,0) \)},\\
 \frac{\alpha + \beta + \gamma+\delta+1}{\alpha\beta} & \mbox{if \( (n,k)=(2,0) \)},\\
\frac{V_{n,k}(1;\alpha,\beta,\gamma,\delta)}{\alpha\beta}& \mbox{otherwise.}
  \end{cases}
\end{equation}
Note that
\begin{align}
  \label{eq:4}
V_{n,k}(1;\alpha,\beta,\gamma,\delta)
&= \binom{n+k-1}{2k-1} \alpha\beta
 + \binom{n+k-2}{2k-1}(\alpha\delta + \beta\gamma)
 + \binom{n+k-3}{2k-1} \gamma\delta \\
  \notag
&\quad + \binom{n+k-1}{2k}(\alpha+\beta)
 + \binom{n+k-2}{2k}(\gamma+\delta)
 + \binom{n+k-1}{2k+1}.
\end{align}
We define \( \widetilde{\nu}^Z_{n,k} \) to be the numerator of \( \nu^Z_{n,k} \)
shown in \eqref{eq:13} without the sign:
\begin{equation}\label{eq:12}
\widetilde{\nu}^Z_{n,k}
= \begin{cases}
\alpha+\beta+\gamma+\delta, & \mbox{if \( (n,k)=(1,0) \)},\\
\alpha+\beta+\gamma+\delta+1, & \mbox{if \( (n,k)=(2,0) \)},\\
V_{n,k}(1;\alpha,\beta,\gamma,\delta), & \mbox{otherwise.}
\end{cases}
\end{equation}
The following proposition gives a path interpretation of these numerators.

\begin{prop}\label{prop:8}
For all \(n,k\ge0\), we have
\[
    \widetilde{\nu}^Z_{n,k}= \sum_{p\in P(u_k\to v_n)}\wt(p). 
\]
\end{prop}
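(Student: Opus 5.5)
We will prove the identity by showing that the path generating function \(W_{n,k}:=\sum_{p\in P(u_k\to v_n)}\wt(p)\) satisfies the same recurrence and initial values as \(\widetilde{\nu}^Z_{n,k}\). By \eqref{eq:31} at \(\xi=1\), the quantities \(u_{n,k}=(-1)^{n-k}\alpha\beta\,\nu^Z_{n,k}\) satisfy \(u_{n+1,k}=u_{n,k-1}+2u_{n,k}-u_{n-1,k}\) for \(n\ge3\). For \((n,k)\ne(1,0),(2,0)\), \(\widetilde\nu^Z_{n,k}=V_{n,k}(1;\alpha,\beta,\gamma,\delta)=u_{n,k}\). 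So for \(n+1\ge4\) the numerators \(\widetilde\nu^Z_{n,k}\) satisfy this recurrence, with the small cases listed in the proof of \Cref{prop:7}, or read off from \eqref{eq:12} and \eqref{eq:4}. Alternatively, and probably more cleanly, we would avoid the recurrence and match the explicit binomial formula \eqref{eq:4} directly by counting paths, monomial by monomial.

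The direct count goes as follows. A path from \(u_k=(k,2k+1)\) to \(v_n\) (with \(n\ge2\)) must cross two special diagonals. The upper diagonal \(j=i+2\) carries the \(\alpha\)-south steps and \(\gamma\)-diagonal steps; the lower diagonal \(j=i\) carries the \(\beta\)-south steps and \(\delta\)-diagonal steps. Every other step has weight \(1\) in the region \(i\ge1\), except for the special vertices near the origin. We would decompose each path at the unique step where it crosses each diagonal. This gives three segments:
\begin{itemize}
\item from \(u_k\) down to the upper diagonal, inside the column strip above \(j=i+2\);
\item between the two diagonals, in the band \(i\le j\le i+2\), which has height two;
\item below \(j=i\), down to the row \(j=0\).
\end{itemize}
Each segment is a path with east and south unit steps in a region bounded by lines of slope one. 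The type of crossing (south versus diagonal) determines the monomial \(\alpha\) or \(\gamma\) for the upper crossing and \(\beta\) or \(\delta\) for the lower one. It may also happen that the path reaches \(v_n\) before making a lower crossing; those paths contribute the monomials with no \(\beta,\delta\) factor. We expect these segment counts to telescope, via Vandermonde/Chu-type convolutions, into the single binomials \(\binom{n+k-1-\epsilon}{2k-1}\), \(\binom{n+k-1-\epsilon}{2k}\), \(\binom{n+k-1}{2k+1}\) of \eqref{eq:4}. Here \(\epsilon\in\{0,1,2\}\) counts the diagonal steps (\(\gamma\) or \(\delta\)), each of which shortens the path by one.

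The remaining cases are handled separately.
\begin{itemize}
\item For \(n=0,1\), the endpoint \(v_n\) lies at height \(-1\). Only the path through \((0,0)\to(0,-1)\) of weight \(\alpha\beta\), or through \((1,0)\to(1,-1)\), is available. The edge \((0,-1)\to(1,-1)\) has weight \(0\), which kills all other routes. This gives \(\widetilde\nu^Z_{0,0}=\widetilde\nu^Z_{1,1}=\alpha\beta\) and \(\widetilde\nu^Z_{1,0}=\alpha+\beta+\gamma+\delta\), which we check directly.
\item For \((n,k)=(2,0)\), the special weights at \((0,0)\) and \((0,1)\) are exactly what produce \(\alpha+\beta+\gamma+\delta+1\), and again we check this directly.
\end{itemize}

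To verify the general case robustly, we would use the recurrence route as a check. Classify paths to \(v_{n+1}=(n+1,0)\) by their last step: it is either an east step from \((n,0)\), or a south step from \((n+1,1)\). A further first-step/last-column analysis should reproduce \(W_{n+1,k}=W_{n,k-1}+2W_{n,k}-W_{n-1,k}\) once \(n\) is large enough that the target lies away from the special region near the origin. The index shift \(k\mapsto k-1\) comes from translating by \((1,2)\) the part of a path that starts with the vertical stretch below \(u_k\). We expect the main obstacle to be this step. The lattice is not translation invariant, because the strips widen as \(j\le 2i+1\) and the diagonals sit at fixed offsets. The term \(-W_{n-1,k}\) therefore has to come from an inclusion–exclusion in the bottom two rows, or else be avoided entirely. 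If the recurrence becomes messy, we will fall back on the direct segment-decomposition count against \eqref{eq:4}.
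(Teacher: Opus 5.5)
Your proposal has a genuine gap. Neither route is carried out, and the path structure that the direct count relies on is described incorrectly.

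\emph{The two routes are not completed.} In the recurrence route you leave the path identity \(W_{n+1,k}=W_{n,k-1}+2W_{n,k}-W_{n-1,k}\) unproved, so it can serve neither as the proof nor as a check. In the direct route, your description of the crossings is wrong in two ways.
\begin{enumerate}
\item At the vertex \((r,r+2)\) on the upper diagonal, the path may also leave by an east step of weight \(1\). The same holds at \((s,s)\), \(s\ge1\), on the lower diagonal. These east exits are exactly what produce the monomials without \(\alpha,\gamma\) (namely \(\beta,\delta,1\)) and those without \(\beta,\delta\) (namely \(\alpha,\gamma,1\)).
\item No path reaches \(v_n\) without visiting the lower diagonal. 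Every east or south step lowers \(j-i\) by exactly \(1\). For \(i\ge1\), diagonal steps start only on \(j-i\in\{2,0\}\) and lower it by \(2\). Hence a path from \(u_k\) (\(k\ge1\)) to \(v_n\) (\(n\ge2\)) meets each of the lines \(j-i=2\), \(j-i=0\), \(j-i=-2\) exactly once.
\end{enumerate}
Taken literally, your picture makes every path carry \(\alpha\) or \(\gamma\). Then the terms \(\binom{n+k-1}{2k}\beta\), \(\binom{n+k-2}{2k}\delta\) and \(\binom{n+k-1}{2k+1}\) of \eqref{eq:4} could never arise, so the hoped-for telescoping cannot hold as described.

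\emph{A case is missing.} The case \(k=0\), \(n\ge3\) is not covered. The point \(u_0=(0,1)\) lies below \(j=i+2\), and your separate checks stop at \(n=2\). There, direct enumeration gives \(\alpha+\beta+\gamma+\delta+n-1\), which matches \eqref{eq:12} and \eqref{eq:4}.

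\emph{How to close the gap.} Use the uniqueness observation above and cut each path at \(A=(r,r+2)\) and at the unique vertex \(C=(s,s)\).
\begin{enumerate}
\item The segment from \(u_k\) to \(A\) is an unweighted east/south path, counted by \(\binom{k-1}{r-k}\).
\item The five short routes from \(A\) to the line \(j=i\) (SS, SE, D, ES, EE) land at \(C=(r,r)\) with weight \(\alpha\), at \(C=(r+1,r+1)\) with total weight \(\alpha+\gamma+1\), and at \(C=(r+2,r+2)\) with weight \(1\).
\item The exit from \(C\) is a \(\beta\)-south step, a \(\delta\)-diagonal step, or a weight-\(1\) east step. It is followed by an unweighted path to \((n,0)\), counted by \(\binom{n-1}{s-1}\), \(\binom{n-2}{s-1}\) and \(\binom{n-1}{s}\) respectively.
\end{enumerate}
Pascal's rule together with the Vandermonde identity \(\sum_r\binom{k-1}{2k-1-r}\binom{N}{r+c}=\binom{N+k-1}{2k-1+c}\) then yields every coefficient of \eqref{eq:4}. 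For example, the \(\alpha\beta\)-coefficient is \(\sum_r\binom{k-1}{r-k}\bigl(\binom{n-1}{r-1}+\binom{n-1}{r}\bigr)=\binom{n+k-1}{2k-1}\).

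The paper's proof is the same kind of segment decomposition. It cuts instead at \(j=i+2\) and \(j=i-2\), so that all weighted steps lie in one middle piece with five possible endpoints whose weight sums are listed explicitly. It handles \(k=0\) by direct enumeration.
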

\begin{proof}
Suppose \(k=0\). By \eqref{eq:12}, we have
\[
  \widetilde{\nu}^Z_{n,0}=
  \begin{cases}
    \alpha\beta, & \text{if } n=0,\\
    \alpha+\beta+\gamma+\delta+n-1, & \text{if } n\ge1.
  \end{cases}
\]
By the definition of \( G \),
this is equal to \( \sum_{p\in P(u_0\to v_n)}\wt(p) \).

Now suppose \( k \geq 1 \). Since
\( \widetilde{\nu}^Z_{n,k} = V_{n,k}(1;\alpha,\beta,\gamma,\delta) \),
it suffices to prove the following identity:
\[
  \sum_{p\in P(u_k \to v_n)} \wt(p) = V_{n,k}(1;\alpha,\beta,\gamma,\delta).
\]

Consider \(p\in P(u_k \to v_n)\). Let \( A \) and \( B \) be the
points of \( p \) on the lines \(y=x+2\) and \(y=x-2\), respectively.
Then we decompose \( p \) as \(p=p_1p_2p_3 \), where \(p_1\) is the
initial part from \(u_k\) to \(A\), \(p_2\) is the middle part from
\(A\) to \(B\), and \(p_3\) is the final part from \(B\) to \(v_n\);
see \Cref{fig:pathdecomp}. Since all steps in \(p_1\) and \(p_3\) have
weight \(1\), we have
\[
  \sum_{p\in P(u_k\to v_n)}\wt(p)
  =
  \sum_{p=p_1p_2p_3\in P(u_k\to v_n)}\wt(p_2).
\]
  
  \begin{figure}
    \centering
    \begin{tikzpicture}[x=0.7cm,y=0.7cm, line width=0.8pt]
     \clip (-1.5, -1.5) rectangle (8.5, 9.5); 

      \def\N{6}
      
      \begin{scope}[draw=gray!60, very thin]
        \foreach \k in {0,...,\N} {
          \pgfmathsetmacro{\xL}{\N-\k}
          \pgfmathsetmacro{\yBot}{2*(\N-\k)}
          \pgfmathsetmacro{\yTop}{2*(\N-\k)+1}
          
          \foreach \j in {0,...,\k} {
            \pgfmathsetmacro{\x}{\xL+\j}
            \ifnum\k=\N
              \draw (\x,\yTop) rectangle ++(1,1);
            \else
              \draw (\x,\yTop) rectangle ++(1,1);
              \draw (\x,\yBot) rectangle ++(1,1);
            \fi
          }
          
          \ifnum\k<\N
            \draw (\k+1,\k+2) -- (\k+2,\k+1);
            \ifnum\k=0
              \draw (0,2) -- (1,1);
            \else
              \draw (\k,\k+3) -- (\k+1,\k+2);
            \fi
          \fi
        }
        \draw (0,1) -- (0,0);
        \draw (1,1) -- (1,0);
        \draw (0,1) -- (1,0);
      \end{scope}

      \draw[thick, black!80, dashed] (-0.5, 2.5) -- (6.5, 9.5) node[below right, font=\small, pos=0.85] {\(y=x+2\)};
      \draw[thick, black!80, dashed] (0.5, -0.5) -- (8, 7) node[above left, font=\small, pos=0.85] {\(y=x-2\)};

      \coordinate (U) at (2,6);
      \coordinate (S) at (2,5); 
      \coordinate (E) at (5,4); 
      \coordinate (V) at (7,1); 

      \draw[ultra thick, green!60!black] (U) -- (S) node[midway, left, font=\small] {\(p_1\)};

      \draw[ultra thick,green!60!black] (S) -- (3,5) -- (4,5) -- (E);
      \node[green!60!black, above, font=\small] at (3.5,5) {\(p_2\)};

      \draw[ultra thick,green!60!black] (E) -- (5,3) -- (6,3) -- (6,2) -- (6,1) -- (V);

      \node[green!60!black, font=\small] at (6.3, 2.5) {\(p_3\)};

      \node[circle, fill=black, inner sep=1.5pt, label={[font=\small]above:\(u_k\)}] at (U) {};
      \node[circle, fill=black, inner sep=1.5pt, label={[font=\small, label distance=2pt]-90:\(v_n\)}] at (V) {};
      \node[circle, fill=black, inner sep=1.5pt, label={[black, font=\small]left:\(A\)}] at (S) {};
      \node[circle, fill=black, inner sep=1.5pt, label={[black, font=\small]right:\(B\)}] at (E) {};

    \end{tikzpicture}%
    \caption{The path decomposition \( p=p_1p_2p_3 \) of
      a path \( p\in P(u_k\to v_n) \) with \( k=2 \) and \( n=7 \).}
    \label{fig:pathdecomp}
  \end{figure}
 
  \begin{figure}
    \centering
    \begin{tikzpicture}
      \draw (0,0) grid (2,2);
      \draw (0,0) grid (1,-1);
      \draw (2,2) grid (3,1);
      \draw (0,2) -- (2,0);
      \draw (0,0) -- (1,-1);
      \draw (2,2) -- (3,1);
      \draw (0,-1) -- (0,-2);
      \draw (3,2) -- (4,2);
      
      \node[blue] at (-.2,1.5) {\( \alpha \)};
      \node[blue] at (-.2,-.5) {\( \beta \)};
      \node[blue] at (.8,.5) {\( \beta \)};
      \node[blue] at (1.8,1.5) {\( \beta \)};
      \node[blue] at (.6,1.6) {\( \gamma \)};
      \node[blue] at (.6,-.4) {\( \delta \)};
      \node[blue] at (1.6,.6) {\( \delta \)};
      \node[blue] at (2.6,1.6) {\( \delta \)};
      
      \node at (0,-2) {\( \bullet \)};
      \node[anchor=west] at (0,-2.25) {\( B_1:\alpha\beta \)};
      \node at (1,-1) {\( \bullet \)};
      \node[anchor=west] at (1,-1.25) {\( B_2:2\alpha\beta+\alpha\delta+\beta\gamma+\alpha+\beta \)};
      \node at (2,0) {\( \bullet \)};
      \node[anchor=west] at (2,-.25) {\( B_3:\alpha\beta+\alpha\delta+\beta\gamma+\gamma\delta+2\alpha+2\beta+\gamma+\delta+1 \)};
      \node at (3,1) {\( \bullet \)};
      \node[anchor=west] at (3,0.75) {\( B_4:\alpha+\beta+\gamma+\delta+2 \)};
      \node at (4,2) {\( \bullet \)};
      \node[anchor=west] at (4,2) {\( B_5:1 \)};

      \node at (0,2) {\( \bullet \)};
      \node at (-.2,2.2) {\( A \)};
    \end{tikzpicture}%
    \caption{The possible end points \( B_i \) of the middle part \( p_2 \) starting from a fixed vertex \( A \).
      The sum of the weights of all paths \( p_2\in P(A,B_i) \) is shown next to each \( B_i \).}
    \label{fig:HJlatticeproof}
  \end{figure}

Once the point \(A=(r,r+2)\) is fixed, the possible points \(B\) are
\[
B_i=(r+i-1,r+i-3),\qquad 1\le i\le5.
\]
The weight sum of the possible middle parts \( p_2 \) ending at each point is
as follows:
\[
  \sum_{p_2\in P(A\to B_i)}\wt(p_2)
  = \begin{cases}
 \alpha\beta & \mbox{if \( i=1 \),}\\
 2\alpha\beta+\alpha\delta+\beta\gamma+\alpha+\beta  & \mbox{if \( i=2 \),}\\
 \alpha\beta+\alpha\delta+\beta\gamma+\gamma\delta+
 2\alpha+2\beta+\gamma+\delta+1  & \mbox{if \( i=3 \),}\\
 \alpha+\beta+\gamma+\delta+2  & \mbox{if \( i=4 \),}\\
 1  & \mbox{if \( i=5 \)}.
  \end{cases}
\]
See \Cref{fig:HJlatticeproof} for the corresponding local picture.

For fixed \(A=(r,r+2)\) and \( B_i \), the numbers of possible choices for \( p_1 \) and \( p_3 \) are \( \binom{k-1}{r-k} \) and \( \binom{n-2}{r+i-3} \), respectively. Hence, summing over all
possible positions of \(A\), the number of pairs \((p_1,p_3)\) for which
\(p_2\) ends at \(B_i\) is
\[
\sum_{r\ge0}
\binom{k-1}{r-k}
\binom{n-2}{r+i-3}
=
\binom{n+k-3}{2k+i-4}.
\]
Therefore, we obtain
\begin{align*}
\sum_{p\in P(u_k \to v_n)} \wt(p) &= \binom{n+k-3}{2k-3}\alpha\beta 
 + \binom{n+k-3}{2k-2}(\alpha\delta+\beta\gamma+2\alpha\beta+\alpha+\beta) \\
&\quad + \binom{n+k-3}{2k-1}(\gamma\delta+\alpha\delta+\beta\gamma+\alpha\beta+2\alpha+2\beta+\gamma+\delta+1) \\
                                  &\quad + \binom{n+k-3}{2k}(\alpha+\beta+\gamma+\delta+2) + \binom{n+k-3}{2k+1}\\
  &= V_{n,k}(1;\alpha,\beta,\gamma,\delta),
\end{align*}
as desired.
\end{proof}
For \(\lambda,\mu\in\Par_n\), we define
\begin{equation}\label{eq:11}
\widetilde N^Z_{\lambda,\mu}
=
\det\left(\widetilde\nu^Z_{\lambda_i+n-i,\mu_j+n-j}\right)_{i,j=1}^n .
\end{equation}
Using the path interpretation in \Cref{prop:8}, we now prove that
\(\widetilde N^Z_{\lambda,\mu}\) has nonnegative coefficients.

By \Cref{prop:8}, the \( (i,j) \)-entry of the matrix in \eqref{eq:11}
is the generating function for the paths from \( u_{\mu_j+n-j}\) to
\(v_{\lambda_i+n-i}\) in \(G\). Since the lattice \( G \) is planar,
by the LGV lemma, we obtain the following lemma; see \Cref{fig:LGVpaths} for an example.

\begin{lem}\label{thm:1}
Let \(\lambda,\mu\in\Par_n\).
  Then we have
\[
    \widetilde N^Z_{\lambda,\mu}
    =
    \sum_{(\pi_1,\ldots,\pi_n)}
    \prod_{i=1}^n \wt(\pi_i),
  \]
  where the sum ranges over all families \((\pi_1,\ldots,\pi_n)\) of
  pairwise non-intersecting paths 
  \[
    \pi_i\in P(u_{\mu_i+n-i}\to v_{\lambda_i+n-i}), \qquad 1\le i\le n.
  \]
In particular,
\(\widetilde N^Z_{\lambda,\mu}\) is a polynomial in
\( \alpha,\beta,\gamma,\delta \) with nonnegative integer
coefficients.
\end{lem}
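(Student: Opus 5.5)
The plan is to apply the Lindström--Gessel--Viennot lemma to the directed graph \(G\), with sources \(u_{\mu_j+n-j}\) and sinks \(v_{\lambda_i+n-i}\).

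First, by \Cref{prop:8}, the \((i,j)\)-entry of the matrix in \eqref{eq:11} is \(\sum_{p\in P(u_{\mu_j+n-j}\to v_{\lambda_i+n-i})}\wt(p)\). The graph \(G\) is acyclic, since every step strictly decreases \(y-x\) or keeps \(y\) fixed while increasing \(x\). So LGV gives
\[
\widetilde N^Z_{\lambda,\mu}=\sum_{\sigma\in S_n}\operatorname{sgn}(\sigma)\sum \prod_{i=1}^n\wt(\pi_i),
\]
where the inner sum runs over non-intersecting families with \(\pi_i\in P(u_{\mu_{\sigma(i)}+n-\sigma(i)}\to v_{\lambda_i+n-i})\). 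It remains to show that only \(\sigma=\mathrm{id}\) contributes. Note that reindexing the rows and columns by decreasing order leaves the sign unchanged.

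The key step, and the main obstacle, is a compatibility argument for the sources and sinks. The sources \(u_k=(k,2k+1)\) lie on the upper boundary of \(G\), which is the line \(y=2x+1\), with \(k\) increasing along that boundary. The sinks \(v_m\) lie on the bottom boundary, namely \(y=0\) for \(m\ge2\) together with the two vertices \((0,-1),(1,-1)\), and are ordered left to right by \(m\).

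\(G\) is planar: it is drawn in the plane with straight segments, and the diagonal edges do not cross the grid edges, since each diagonal lies inside a unit cell whose other diagonal is absent. All sources and sinks lie on the outer face. Going around the boundary, we meet the sources in increasing order of \(k\) along the top-left side, then, after passing the far right, we return along the bottom through the sinks in decreasing order of \(m\).

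The two index sets \(\{\mu_j+n-j\}\) and \(\{\lambda_i+n-i\}\) are each strictly decreasing in their index. Hence the sources and sinks are paired in a non-crossing ("parallel") fashion. By the standard planarity argument, any path from \(u_a\) to \(v_d\) must meet any path from \(u_b\) to \(v_c\) when \(a<b\) and \(c<d\). Consequently, every non-identity \(\sigma\) admits no non-intersecting family.

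For this step I would check the boundary cases carefully: the sinks \(v_0,v_1\) at height \(-1\), and the vertex \(u_0=(0,1)\). These do lie on the outer boundary, since \((0,-1)\) and \((1,-1)\) are hanging below \((0,0)\) and \((1,0)\). If a cleaner argument is wanted, I would instead use a Jordan-curve argument: a path from \(u_a\) to \(v_d\) separates the region of \(G\) so that \(u_b\) and \(v_c\) lie on opposite sides.

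With only the identity permutation surviving, the determinant equals the stated sum over non-intersecting families with \(\pi_i\in P(u_{\mu_i+n-i}\to v_{\lambda_i+n-i})\). Every edge weight is one of \(0,1,\alpha,\beta,\gamma,\delta,\alpha\beta\), so each \(\prod_i\wt(\pi_i)\) is a monomial with coefficient \(0\) or \(1\). Therefore \(\widetilde N^Z_{\lambda,\mu}\) is a polynomial in \(\alpha,\beta,\gamma,\delta\) with nonnegative integer coefficients.
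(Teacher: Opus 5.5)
Your proposal is correct and matches the paper's argument: \Cref{prop:8} identifies each matrix entry as a path generating function in $G$, and the planarity of $G$ lets the LGV lemma keep only the identity permutation, since the sources $u_k$ lie on the upper staircase and the sinks $v_m$ on the bottom boundary in compatible order. The paper states this in a single sentence; your acyclicity, planarity and boundary-order checks spell out details it leaves implicit.
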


\begin{figure}
  \centering
  \begin{tikzpicture}[x=0.8cm,y=0.8cm, line width=0.8pt]
  \def\N{3}
  \def\pad{0.1}

  \pgfmathsetmacro{\Xright}{\N+1}
  \pgfmathsetmacro{\YtopLine}{2*(\N+1)+2}
  \pgfmathsetmacro{\YbotLine}{1}

  \draw[gray,thin,->] (-0.7,1) -- (\Xright+0.8,1) node[below] {$x$};
  \draw[gray,thin,->] (0,-.5) -- (0,\YtopLine+0.5) node[left] {$y$};
  \foreach \k in {0,...,\N} {
    \pgfmathsetmacro{\xL}{\N-\k}
    \pgfmathsetmacro{\yBot}{2*(\N-\k)}
    \pgfmathsetmacro{\yTop}{2*(\N-\k)+1}

    \foreach \j in {0,...,\k} {
      \pgfmathsetmacro{\x}{\xL+\j}
      \ifnum\k=\N
        \draw (\x,\yTop) rectangle ++(1,1);
      \else
        \draw (\x,\yTop) rectangle ++(1,1);
        \draw (\x,\yBot) rectangle ++(1,1);
        \draw (\k+1,\k+2) -- (\k+2,\k+1);
        \node[blue] at (\k+1.35,\k+1.35) {\( \delta \)};
      \fi
    }
    \node[blue] at (\k+0.83,\k+3.5) {\( \alpha \)};
    \node[blue] at (\k+.83,\k+1.5) {\( \beta \)};
    \node at (\k,2*\k+2) {\( \bullet \)};
    \node at (\k-0.25,2*\k+2.3) {\( u_\k \)};
    \ifnum\k<1
      \node at (0,0) {\( \bullet \)};
      \node at (0.25,-.2) {\( v_0 \)};
      \node at (1,0) {\( \bullet \)};
      \node at (1.25,-.2) {\( v_1 \)};
      \draw (0,2) -- (1,1);
      \node[blue] at (0.35,1.35) {\( \gamma \)};
    \else
      \pgfmathtruncatemacro{\K}{\k+1}
      \node at (\k+1,1) {\( \bullet \)};
      \node at (\k+1.25,0.8) {\( v_{\K} \)};
      \draw (\k,\k+3) -- (\k+1,\k+2);
      \node[blue] at (\k+0.35,\k+2.35) {\( \gamma \)};
    \fi
  }
  \node[blue] at (.5,.85) {\( \alpha \)};
  \node[blue] at (-.3,.5) {\( \alpha\beta \)};
  \node[blue] at (.35,.35) {\( \delta \)};
  
  \draw (0,1) -- (0,0);
  \draw (1,1) -- (1,0);
  \draw (0,1) -- (1,0);
  \draw (\Xright,\YbotLine) -- (\Xright,\YtopLine);
  \node[right] at (\Xright+0.35,5.0) {\(\cdots\)};

  \draw[line width=2.5pt, green!60!black, rounded corners=2pt] 
    (0,2) -- (0,1) -- (1,0);
  \node[green!60!black, font=\bfseries] at (-.4, 1.5) {\( \pi_3 \)};

  \draw[line width=2.5pt, green!60!black, rounded corners=2pt] 
    (1,4) -- (1,1) -- (2,1);
  \node[green!60!black, font=\bfseries] at (0.6, 2.5) {\( \pi_2 \)};

  \draw[line width=2.5pt, green!60!black, rounded corners=2pt ] 
    (2,6) -- (2,4) -- (3,4) -- (4,3) -- (4,1);
  \node[green!60!black, font=\bfseries] at (1.6, 5.5) {\( \pi_1 \)};

  \node at (4,10) {\( \bullet \)};
  \node at (4-0.25,10.3) {\( u_4 \)};
  \end{tikzpicture}
  \caption{An example of a family \( (\pi_1, \pi_2, \pi_3) \) of
    pairwise non-intersecting paths with \( \pi_1\in P(u_2\to v_4) \),
    \( \pi_2\in P(u_1\to v_2) \), and \( \pi_3\in P(u_0\to v_1) \). Since
    \( \wt(\pi_1)=\alpha\delta, \wt(\pi_2)=\alpha\beta, \) and
    \( \wt(\pi_3)=\delta \), this family contributes the monomial \( \alpha^2\beta\delta^2 \) to \( \widetilde{N}^Z_{(2,1,1),(0,0,0)} \).
    }
  \label{fig:LGVpaths}
\end{figure}

The following lemma presents a relation between \(N^Z_{\lambda,\mu}\) and \(\widetilde N^Z_{\lambda,\mu}\).

\begin{lem}\label{lem:8}
  Let \(\lambda,\mu\in\Par_n\) with \( n\ge1 \). 
  \begin{enumerate}
  \item If \( (\lambda_n,\mu_n)\ne(1,0) \), then
    \[
      (-1)^{|\lambda|-|\mu|}
      (\alpha\beta)^n
      N^Z_{\lambda,\mu}
      =
      \widetilde N^Z_{\lambda,\mu}.
    \]

  \item If \((\lambda_n,\mu_n)=(1,0)\) and either \(n=1\) or \(\mu_{n-1}>0\), then
    \[
      (-1)^{|\lambda|-|\mu|}
      (\alpha\beta-\gamma\delta)(\alpha\beta)^{n-1}
      N^Z_{\lambda,\mu}
      =
      \widetilde N^Z_{\lambda,\mu}.
    \]

  \item If \((\lambda_n,\mu_n)=(1,0)\), \(n\ge2\), and \(\mu_{n-1}=0\), then
    \[
      (-1)^{|\lambda|-|\mu|}
      (\alpha\beta-\gamma\delta)(\alpha\beta)^{n-1}
      N^Z_{\lambda,\mu}
      =
      \widetilde N^Z_{\lambda,\mu}
      +
      \gamma\delta\,
      \widetilde N^Z_{(\lambda_1+1,\ldots,\lambda_{n-1}+1),
        (\mu_1+1,\ldots,\mu_{n-2}+1,\mu_n)}.
    \]
  \end{enumerate}
\end{lem}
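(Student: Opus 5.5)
The plan is to compare the two determinants entry by entry using the explicit formula \eqref{eq:13}, and then expand along the last row in the cases where one entry has a different denominator. Write $r_i=\lambda_i+n-i$ and $s_j=\mu_j+n-j$, and let $T=(\widetilde\nu^Z_{r_i,s_j})_{i,j=1}^n$, so that $\widetilde N^Z_{\lambda,\mu}=\det T$.

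First, the sign. Every term of the Leibniz expansion of $N^Z_{\lambda,\mu}$ carries $\prod_i(-1)^{r_i-s_{\sigma(i)}}=(-1)^{|\lambda|-|\mu|}$, independently of $\sigma$. So we may replace each $\nu^Z_{r,s}$ by $(-1)^{r-s}\nu^Z_{r,s}$, which is nonnegative-numerator. By \eqref{eq:13}, this quantity equals $\widetilde\nu^Z_{r,s}/(\alpha\beta)$ unless $(r,s)=(1,0)$, where it is $\widetilde\nu^Z_{1,0}/(\alpha\beta-\gamma\delta)$. This also holds for $(r,s)=(0,0)$ and $(1,1)$, where $\widetilde\nu^Z_{0,0}=\widetilde\nu^Z_{1,1}=\alpha\beta$, and for the zero entries with $r<s$.

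Next, locate the exceptional entry. Since the $s_j$ are distinct and decreasing, $s_j=0$ forces $j=n$ and $\mu_n=0$. Likewise $r_i=1$ forces either $i=n$ with $\lambda_n=1$, or $i=n-1$ with $\lambda_{n-1}=\lambda_n=0$.
\begin{itemize}
\item In the second situation we have $r_n=0$. Then row $n$ is zero except for the $(n,n)$ entry $\nu^Z_{0,0}=1=\widetilde\nu^Z_{0,0}/(\alpha\beta)$. Expanding along row $n$ removes the exceptional $(n-1,n)$ entry entirely.
\item Hence, if $(\lambda_n,\mu_n)\ne(1,0)$, every entry that can contribute equals $\widetilde\nu^Z/(\alpha\beta)$ up to the uniform sign. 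Multilinearity in the rows then gives part (1) immediately.
\end{itemize}

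Now suppose $(\lambda_n,\mu_n)=(1,0)$, so $r_n=1$. Row $n$ can only be nonzero in the columns with $s_j\in\{0,1\}$. Let $T'$ be $T$ with rows $1,\dots,n-1$ unchanged and row $n$ replaced by the signed entries $\pm\nu^Z_{1,s_j}$. Then
\[
(-1)^{|\lambda|-|\mu|}(\alpha\beta)^{n-1}N^Z_{\lambda,\mu}=\det T'.
\]
\begin{itemize}
\item If $n=1$ or $\mu_{n-1}>0$, the only nonzero entry of row $n$ is $\widetilde\nu^Z_{1,0}/(\alpha\beta-\gamma\delta)$ in position $(n,n)$. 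Hence $\det T'=\widetilde\nu^Z_{1,0}K_{nn}/(\alpha\beta-\gamma\delta)$, where $K_{nn}$ is the $(n,n)$ cofactor of $T$. Also $\det T=\widetilde\nu^Z_{1,0}K_{nn}$, and part (2) follows.
\item If $\mu_{n-1}=0$, then $s_{n-1}=1$ and row $n$ of $T'$ is $(0,\dots,0,1,\widetilde\nu^Z_{1,0}/(\alpha\beta-\gamma\delta))$. Row $n$ of $T$ is $(0,\dots,0,\alpha\beta,\widetilde\nu^Z_{1,0})$. Expanding both along row $n$ with cofactors $K_{n,n-1},K_{nn}$ of $T$ gives
\[
(\alpha\beta-\gamma\delta)\det T'=\alpha\beta K_{n,n-1}+\widetilde\nu^Z_{1,0}K_{nn}-\gamma\delta K_{n,n-1}=\det T-\gamma\delta K_{n,n-1}.
\]
\end{itemize}

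It remains to identify $-K_{n,n-1}$, the minor of $T$ obtained by deleting row $n$ and column $n-1$. In size $n-1$ the row indices are
\[
r_i=(\lambda_i+1)+(n-1)-i,\qquad 1\le i\le n-1.
\]
The column indices are
\[
s_j=(\mu_j+1)+(n-1)-j\quad (j\le n-2),\qquad s_n=0=\mu_n+(n-1)-(n-1).
\]
So this minor is exactly $\widetilde N^Z_{(\lambda_1+1,\dots,\lambda_{n-1}+1),(\mu_1+1,\dots,\mu_{n-2}+1,\mu_n)}$, which yields part (3).

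I expect the main obstacle to be the bookkeeping rather than any single step. Specifically, one must check that the exceptional $(1,0)$ entry and the $(1,1)$ entry occur only where claimed, including the degenerate $\lambda_{n-1}=0$ situation. One must also keep the cofactor sign in part (3) straight, since it is what turns $-\gamma\delta K_{n,n-1}$ into $+\gamma\delta\,\widetilde N^Z$.
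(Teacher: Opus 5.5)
Your proposal is correct and takes essentially the same route as the paper. Both arguments compare entries via \eqref{eq:13}, locate the only possible exceptional entry $(r_i,s_j)=(1,0)$ in column $n$ with $i\in\{n-1,n\}$, and expand along the last row; you just package the $(\alpha\beta)^{n-1}$ factor through the auxiliary matrix $T'$ and cofactors of $T$, while the paper expands both determinants and applies its entrywise observation to the $(n-1)\times(n-1)$ minors. Your cofactor sign in part (3) and your identification of the resulting minor are both right.
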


\begin{proof}
To compare the determinants \(N^Z_{\lambda,\mu}\) and
\(\widetilde N^Z_{\lambda,\mu}\) using \eqref{eq:11}, set
\(r_i=\lambda_i+n-i\) and \(s_i=\mu_i+n-i\) for \(1\le i\le n\), so
that
\begin{equation}\label{eq:23}
  N^Z_{\lambda,\mu}=\det(\nu^Z_{r_i,s_j})_{i,j=1}^n,
  \qquad
  \widetilde N^Z_{\lambda,\mu}
  =\det(\widetilde\nu^Z_{r_i,s_j})_{i,j=1}^n.
\end{equation}
Then \( \sum_{i=1}^n r_i-\sum_{i=1}^n s_i=|\lambda|-|\mu| \).

We first record a simple observation. If \((1,0)\) does not occur among
the pairs \((r_i,s_j)\), then by \eqref{eq:13} and  \eqref{eq:12}, we have
\( \widetilde{\nu}^Z_{r_i,s_j} = (-1)^{r_i-s_j}\alpha\beta\,\nu^Z_{r_i,s_j} \)
for all \(i,j\). Thus, in this case, we have 
\( \widetilde N^Z_{\lambda,\mu} =
(-1)^{|\lambda|-|\mu|}(\alpha\beta)^nN^Z_{\lambda,\mu} \).

For the first statement, suppose that \((\lambda_n,\mu_n)\ne(1,0)\). If \((1,0)\) does
not occur among the pairs \((r_i,s_j)\), then the desired identity
follows from the above observation. Suppose \((r_i,s_j)=(1,0)\) for some
\(i\) and \(j\). Then \(s_j=\mu_j+n-j=0\) forces \(j=n\) and
\(\mu_n=0\). Since \(r_i=\lambda_i+n-i=1\), we have \(i=n\)
and \( \lambda_n=1 \), or \(i=n-1\) and \( \lambda_{n-1}=0 \). Since \((\lambda_n,\mu_n)\ne (1,0)\), we must have
\(\lambda_{n-1}=0\), and thus \(\lambda_n=0\). Therefore
\(r_n=0\) and \(s_n=0\). Then, since \( \nu^Z_{r_n,s_n}=1 \) and
\( \widetilde\nu^Z_{r_n,s_n}=\alpha\beta \) are the only nonzero
entries in the last rows of the matrices in \eqref{eq:23}, we have
\begin{equation}\label{eq:24}
  N^Z_{\lambda,\mu}=\det(\nu^Z_{r_i,s_j})_{i,j=1}^{n-1},
  \qquad
  \widetilde N^Z_{\lambda,\mu}
  =\alpha \beta \det(\widetilde\nu^Z_{r_i,s_j})_{i,j=1}^{n-1}.
\end{equation}
Since \((r_i,s_j)\ne (1,0)\) for all \( 1\le i,j\le n-1 \)
and \( \lambda_n=\mu_n=0 \), the observation above gives
\[
\det(\widetilde\nu^Z_{r_i,s_j})_{i,j=1}^{n-1}
=
(-1)^{|\lambda|-|\mu|}(\alpha\beta)^{n-1}
\det(\nu^Z_{r_i,s_j})_{i,j=1}^{n-1}.
\]
Combining this with \eqref{eq:24}, we obtain the first statement.

For the second statement, suppose that \((\lambda_n,\mu_n)=(1,0)\) and
either \(n=1\) or \(\mu_{n-1}>0\). Then \(r_n=1\), \(s_n=0\), and
\( s_{n-1}\ge2 \) (if \( n\ge2 \)). Thus,
\( \nu^Z_{r_n,s_n}=\nu^Z_{1,0}= -
(\alpha+\beta+\gamma+\delta)/(\alpha\beta-\gamma\delta) \) and
\(
\widetilde\nu^Z_{r_n,s_n}=\widetilde\nu^Z_{1,0}=\alpha+\beta+\gamma+\delta
\) are the only nonzero entries in the last rows of the matrices in
\eqref{eq:23}. Therefore,
\begin{equation}\label{eq:25}
  N^Z_{\lambda,\mu}=
  - \frac{\alpha+\beta+\gamma+\delta}{\alpha\beta-\gamma\delta}
  \det(\nu^Z_{r_i,s_j})_{i,j=1}^{n-1},
  \qquad
  \widetilde N^Z_{\lambda,\mu}
  = (\alpha+\beta+\gamma+\delta) \det(\widetilde\nu^Z_{r_i,s_j})_{i,j=1}^{n-1}.
\end{equation}
Since \((r_i,s_j)\ne (1,0)\) for all \( 1\le i,j\le n-1 \)
and \((\lambda_n,\mu_n)=(1,0)\), the observation above gives
\[
\det(\widetilde\nu^Z_{r_i,s_j})_{i,j=1}^{n-1}
=
(-1)^{|\lambda|-|\mu|-1}(\alpha\beta)^{n-1}
\det(\nu^Z_{r_i,s_j})_{i,j=1}^{n-1}.
\]
Combining this with \eqref{eq:25}, we obtain the second statement.

For the last statement, suppose \((\lambda_n,\mu_n)=(1,0)\),
\(n\ge2\), and \(\mu_{n-1}=0\). Then \( r_n=1 \), \(s_{n-1}=1\), and
\(s_{n}=0\). Since
\( \nu^Z_{r_n,s_n}=\nu^Z_{1,0}= -
(\alpha+\beta+\gamma+\delta)/(\alpha\beta-\gamma\delta) \),
\( \nu^Z_{r_n,s_{n-1}}=\nu^Z_{1,1}=1 \),
\(
\widetilde\nu^Z_{r_n,s_n}=\widetilde\nu^Z_{1,0}=\alpha+\beta+\gamma+\delta
\), and
\( \widetilde\nu^Z_{r_n,s_{n-1}}=\widetilde\nu^Z_{1,1}=\alpha\beta \)
are the only nonzero entries in the last rows of the matrices in
\eqref{eq:23}, we have
\begin{align}
\label{eq:26}  N^Z_{\lambda,\mu}
  &=-\frac{\alpha+\beta+\gamma+\delta}{\alpha\beta-\gamma\delta}
    \det(\nu^Z_{r_i,s_j})_{i,j=1}^{n-1}
    -\det(\nu^Z_{r_i,s'_j})_{i,j=1}^{n-1},\\
  \label{eq:29}
  \widetilde N^Z_{\lambda,\mu}
  &=(\alpha+\beta+\gamma+\delta)
    \det(\widetilde\nu^Z_{r_i,s_j})_{i,j=1}^{n-1}
    -\alpha\beta\det(\widetilde\nu^Z_{r_i,s'_j})_{i,j=1}^{n-1},
\end{align}
where \( s'_j=s_j \) if \( 1\le j\le n-2 \)
and \( s'_{n-1} = s_{n}=0 \).

Since \((r_i,s_j)\ne(1,0)\) for all
\(1\le i,j\le n-1\), by the above observation
with \(\sum_{i=1}^{n-1}r_i-\sum_{j=1}^{n-1}s_j
=|\lambda|-|\mu|-1\),
we have
\begin{equation}\label{eq:27}
\det(\widetilde\nu^Z_{r_i,s_j})_{i,j=1}^{n-1}
=(-1)^{|\lambda|-|\mu|-1}(\alpha\beta)^{n-1}
\det(\nu^Z_{r_i,s_j})_{i,j=1}^{n-1}.
\end{equation}
Similarly, since \((r_i,s'_j)\ne(1,0)\) for all
\(1\le i,j\le n-1\), by the above observation
with
\(\sum_{i=1}^{n-1}r_i-\sum_{j=1}^{n-1}s'_j
=|\lambda|-|\mu|-r_n+s_{n-1}=|\lambda|-|\mu|\), we have
\begin{equation}\label{eq:28}
\det(\widetilde\nu^Z_{r_i,s'_j})_{i,j=1}^{n-1}
=(-1)^{|\lambda|-|\mu|}(\alpha\beta)^{n-1}
\det(\nu^Z_{r_i,s'_j})_{i,j=1}^{n-1}.
\end{equation}
By \eqref{eq:26}, we have
\begin{align*}
  (-1)^{|\lambda|-|\mu|}(\alpha\beta-\gamma\delta)(\alpha\beta)^{n-1}
  N^Z_{\lambda,\mu}
  &=
  (-1)^{|\lambda|-|\mu|-1}(\alpha+\beta+\gamma+\delta)(\alpha\beta)^{n-1}
  \det(\nu^Z_{r_i,s_j})_{i,j=1}^{n-1}\\
  &\qquad
  -(-1)^{|\lambda|-|\mu|}(\alpha\beta-\gamma\delta)(\alpha\beta)^{n-1}
  \det(\nu^Z_{r_i,s'_j})_{i,j=1}^{n-1}.
\end{align*}
By \eqref{eq:27}, \eqref{eq:28}, and \eqref{eq:29}, we
can rewrite the above equation as
\begin{align*}
  &(-1)^{|\lambda|-|\mu|}(\alpha\beta-\gamma\delta)(\alpha\beta)^{n-1}
  N^Z_{\lambda,\mu}\\
  &\quad=
  (\alpha+\beta+\gamma+\delta)\det(\widetilde\nu^Z_{r_i,s_j})_{i,j=1}^{n-1}
  -\alpha\beta\det(\widetilde\nu^Z_{r_i,s'_j})_{i,j=1}^{n-1}
  +\gamma\delta\det(\widetilde\nu^Z_{r_i,s'_j})_{i,j=1}^{n-1}\\
  &\quad=\widetilde N^Z_{\lambda,\mu}
  +\gamma\delta\det(\widetilde\nu^Z_{r_i,s'_j})_{i,j=1}^{n-1}.
\end{align*}
Finally, since
\[
\det(\widetilde\nu^Z_{r_i,s'_j})_{i,j=1}^{n-1}
=\widetilde N^Z_{(\lambda_1+1,\ldots,\lambda_{n-1}+1),
  (\mu_1+1,\ldots,\mu_{n-2}+1,\mu_n)},
\]
we obtain the last statement. This completes the proof.
\end{proof}

Recall that \( \vec1_{P} \) is \( 1 \) if the statement \( P \) is
true and \( 0 \) otherwise. By \Cref{thm:1,lem:8}, we obtain the
following result.

\begin{prop}\label{pro:1}
  Let \(\lambda,\mu\in\Par_n\) with
  \( n\ge1 \). Then the following is a polynomial with nonnegative
  integer coefficients:
  \[
    (-1)^{|\lambda|-|\mu|}(\alpha\beta -
    {\vec 1}_{(\lambda_n,\mu_n)=(1,0)}\gamma\delta)
    (\alpha\beta)^{n-1}N_{\lambda, \mu}^Z.
  \]
\end{prop}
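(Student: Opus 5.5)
The plan is to derive the statement directly from \Cref{lem:8} and \Cref{thm:1}, splitting into the same three cases as in \Cref{lem:8} according to the pair \((\lambda_n,\mu_n)\). Throughout, \Cref{thm:1} guarantees that every determinant of the form \(\widetilde N^Z_{\eta,\theta}\) is a polynomial with nonnegative integer coefficients, so it suffices to show that in each case the displayed quantity equals a nonnegative combination of such determinants.

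First suppose \((\lambda_n,\mu_n)\ne(1,0)\). Then the indicator vanishes, and the quantity in the proposition is \((-1)^{|\lambda|-|\mu|}(\alpha\beta)^nN^Z_{\lambda,\mu}\). By part (1) of \Cref{lem:8}, this equals \(\widetilde N^Z_{\lambda,\mu}\), which is nonnegative by \Cref{thm:1}.

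Next suppose \((\lambda_n,\mu_n)=(1,0)\). Now the indicator equals \(1\), and the quantity is \((-1)^{|\lambda|-|\mu|}(\alpha\beta-\gamma\delta)(\alpha\beta)^{n-1}N^Z_{\lambda,\mu}\).
\begin{itemize}
\item If \(n=1\) or \(\mu_{n-1}>0\), part (2) of \Cref{lem:8} identifies this quantity with \(\widetilde N^Z_{\lambda,\mu}\), and we conclude as before.
\item If \(n\ge2\) and \(\mu_{n-1}=0\), part (3) expresses it as \(\widetilde N^Z_{\lambda,\mu}+\gamma\delta\,\widetilde N^Z_{\eta,\theta}\), where \(\eta=(\lambda_1+1,\dots,\lambda_{n-1}+1)\) and \(\theta=(\mu_1+1,\dots,\mu_{n-2}+1,\mu_n)\).
\end{itemize}

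The only point needing care is in the last case: \Cref{thm:1} must apply to \(\widetilde N^Z_{\eta,\theta}\), which requires \(\eta,\theta\in\Par_{n-1}\). For \(\eta\) this is clear. For \(\theta\), its last entry is \(\mu_n=0\) and its earlier entries are weakly decreasing and positive. To confirm that the determinant really is of the \(\widetilde N\) form, note that in \(\Par_{n-1}\) the column indices \(\theta_j+(n-1)-j\) equal \(\mu_j+n-j=s_j\) for \(j\le n-2\), and equal \(0=s_n\) for \(j=n-1\). Likewise the row indices \(\eta_i+(n-1)-i\) equal \(r_i\). So the determinant is exactly \(\det(\widetilde\nu^Z_{r_i,s'_j})\) as in the proof of \Cref{lem:8}. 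Then \Cref{thm:1} shows both terms are nonnegative, and since \(\gamma\delta\) has a nonnegative coefficient, so is their sum. I expect no real obstacle; this index check is the only step beyond direct citation.
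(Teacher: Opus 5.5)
Your proposal is correct and follows the paper's route exactly: the paper also deduces this proposition directly from \Cref{lem:8} and \Cref{thm:1} by case analysis on \((\lambda_n,\mu_n)\). Your index check that \(\eta,\theta\in\Par_{n-1}\) and that \(\widetilde N^Z_{\eta,\theta}=\det(\widetilde\nu^Z_{r_i,s'_j})\) only makes explicit an identification the paper already records at the end of the proof of \Cref{lem:8}.
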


By the duality in \Cref{lem:10}, we obtain a positivity result for
\(M^Z_{\lambda,\mu}(1;\alpha,\beta,\gamma,\delta;0)\).

\begin{prop} \label{cor:4}
  Let \( \lambda, \mu  \in \Par_n \) with
  \( \lambda_1, \mu_1 \le m \).
  Then the following
  is a polynomial with nonnegative integer coefficients:
  \[
    (\alpha\beta - {\vec 1}_{(\lambda'_1,\mu'_1)=(n,n-1)}\gamma\delta)
    (\alpha\beta)^{m-1}M_{\lambda, \mu}^Z.
  \]
\end{prop}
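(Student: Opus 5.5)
We apply \Cref{lem:10} to transfer the problem to the coefficients $N^Z$, where \Cref{pro:1} already gives positivity. Throughout we work at $(\xi,q)=(1,0)$. The OPS $\{P^Z_r(x)\}$ is monic, so \Cref{lem:10} applies. For $\lambda,\mu\in\Par_n$ with $\lambda_1,\mu_1\le m$ it gives
\[
  M^Z_{\lambda,\mu}=(-1)^{|\lambda|-|\mu|}N^Z_{\widetilde\mu,\widetilde\lambda},
\]
where $\widetilde\lambda,\widetilde\mu\in\Par_m$ are defined by $\widetilde\lambda_j=n-\lambda'_{m+1-j}$ and $\widetilde\mu_j=n-\mu'_{m+1-j}$.

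Next we apply \Cref{pro:1} with $n$ replaced by $m$ and the pair $(\lambda,\mu)$ replaced by $(\widetilde\mu,\widetilde\lambda)$. We may assume $m\ge1$; if $m=0$, both partitions are empty and $M^Z_{\emptyset,\emptyset}=1$, so the claim is vacuous or trivial. \Cref{pro:1} says that
\[
  (-1)^{|\widetilde\mu|-|\widetilde\lambda|}\bigl(\alpha\beta-\vec1_{(\widetilde\mu_m,\widetilde\lambda_m)=(1,0)}\gamma\delta\bigr)(\alpha\beta)^{m-1}N^Z_{\widetilde\mu,\widetilde\lambda}
\]
has nonnegative integer coefficients.

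It remains to translate the data. First, $\widetilde\mu_m=n-\mu'_1$ and $\widetilde\lambda_m=n-\lambda'_1$. Hence $(\widetilde\mu_m,\widetilde\lambda_m)=(1,0)$ exactly when $\mu'_1=n-1$ and $\lambda'_1=n$, that is, when $(\lambda'_1,\mu'_1)=(n,n-1)$, which is the indicator in the statement. Second, $|\widetilde\lambda|=mn-|\lambda|$ and $|\widetilde\mu|=mn-|\mu|$, so
\[
  |\widetilde\mu|-|\widetilde\lambda|=|\lambda|-|\mu|.
\]
The sign in \Cref{pro:1} is therefore $(-1)^{|\lambda|-|\mu|}$, and it cancels the sign from \Cref{lem:10}:
\[
  (-1)^{|\lambda|-|\mu|}N^Z_{\widetilde\mu,\widetilde\lambda}=(-1)^{2(|\lambda|-|\mu|)}M^Z_{\lambda,\mu}=M^Z_{\lambda,\mu}.
\]
So the expression in \Cref{pro:1} is exactly $(\alpha\beta-\vec1_{(\lambda'_1,\mu'_1)=(n,n-1)}\gamma\delta)(\alpha\beta)^{m-1}M^Z_{\lambda,\mu}$, which proves the claim.

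The main point to check is the index bookkeeping, specifically that $\widetilde\lambda$ and $\widetilde\mu$ are genuine elements of $\Par_m$. This holds because $\lambda'_{m+1-j}\le n$, which gives $\widetilde\lambda_j\ge0$, and because $\lambda'$ is weakly decreasing, which makes $\widetilde\lambda$ weakly decreasing. The same argument applies to $\widetilde\mu$. One should also confirm the reversal of roles: $\widetilde\mu$ is the first index of $N^Z$, so it plays the part of $\lambda$ in \Cref{pro:1}. This is why the indicator condition reads $(\widetilde\mu_m,\widetilde\lambda_m)=(1,0)$ rather than the other way round.
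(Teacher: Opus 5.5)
Your proof is correct and matches the paper's argument. You use \Cref{lem:10} to rewrite $M^Z_{\lambda,\mu}$ as $(-1)^{|\widetilde\mu|-|\widetilde\lambda|}N^Z_{\widetilde\mu,\widetilde\lambda}$, then invoke \Cref{pro:1} with $n$ replaced by $m$, using $(\widetilde\mu_m,\widetilde\lambda_m)=(1,0)\iff(\lambda'_1,\mu'_1)=(n,n-1)$; your explicit sign computation and your treatment of the degenerate case $m=0$ (which \Cref{pro:1} excludes through its hypothesis $n\ge1$) fill in details the paper leaves implicit.
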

\begin{proof}
  Let \( \widetilde\lambda, \widetilde\mu \in \Par_m \) be the
  partitions given by \( \widetilde\lambda_j = n - \lambda'_{m+1-j} \)
  and \( \widetilde\mu_j = n - \mu'_{m+1-j} \). Then by \Cref{lem:10}, we have
  \[
    M_{\lambda, \mu}^Z = (-1)^{|\lambda|-|\mu|}N_{\widetilde\mu, \widetilde\lambda}^Z
    = (-1)^{|\widetilde{\mu}|-|\widetilde{\lambda}|}N_{\widetilde\mu, \widetilde\lambda}^Z.
  \]
  Thus it suffices to show that the following
  is a polynomial with nonnegative integer coefficients:
  \[
    (-1)^{|\widetilde{\mu}|-|\widetilde{\lambda}|}
    (\alpha\beta - {\vec 1}_{(\lambda'_1,\mu'_1)=(n,n-1)}\gamma\delta)
    (\alpha\beta)^{m-1} N_{\widetilde\mu, \widetilde\lambda}^Z.
  \]
  Since \( (\widetilde\mu_m, \widetilde\lambda_m) = (1, 0) \) if and
  only if \( (\lambda'_1, \mu'_1) = (n, n-1) \), this follows from \Cref{pro:1}.
\end{proof}

\begin{remark}
  The denominators of \( M_{\lambda, \mu}^Z \) and \( N_{\lambda, \mu}^Z \)
  in the case \( (\xi, q) = (1, 0) \)
  shown in \Cref{lem:8} and \Cref{cor:4} are also denominators of
  \( M_{\lambda, \mu}^Z \) and \( N_{\lambda, \mu}^Z \)
  in the case \( q = 0 \) with a general \( \xi \).
\end{remark}

The denominators in \Cref{lem:8} and \Cref{cor:4} are not the minimal
denominators in general.
In \Cref{prop:minimal_q0xi1,prop:minimal_q0_M}, we will determine the minimal denominators of
\( N^Z_{\lambda, \mu} \) and \( M^Z_{\lambda, \mu} \), respectively,
for the case \( (\xi, q) = (1, 0) \).

\subsection{The case \( (\xi,q)=(1,1) \)}
\label{subsec:q1-xi1}

In this subsection, we define a quantity
\(\widetilde{M}^{Z}_{\lambda,\mu}\), which will be shown to be a
numerator of \(M^{Z}_{\lambda,\mu}\) under the specialization
\((\xi,q)=(1,1)\). We then derive an explicit formula for
\(\widetilde{M}^{Z}_{\lambda,\mu}\) and obtain a positivity result for
this numerator. Throughout this subsection, we assume
\( (\xi,q)=(1,1) \). For simplicity, we write
\( M^Z_{\lambda,\mu} =
M^Z_{\lambda,\mu}(1;\alpha,\beta,\gamma,\delta;1) \).

We begin with the following known formula for
\(\widetilde{Z}_{n,k}(1;\alpha,\beta,\gamma,\delta;1)\).

\begin{prop}\cite[Proposition~3.16]{CMW}\label{prop:case2}
We have
\begin{equation}\label{eq:3}
  \widetilde{Z}_{n, k}(1;\alpha,\beta,\gamma,\delta;1)
  =\binom{n}{k} \prod_{i=k}^{n-1}\left( \alpha+\beta+\gamma+\delta+i(\alpha+\gamma)(\beta+\delta)\right) .
\end{equation}
\end{prop}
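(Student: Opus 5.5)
Since this result is cited from \cite{CMW}, my plan is to re-derive it from the orthogonal polynomial side using only results stated above. By \Cref{thm:Z_nk} and \Cref{lem:3}, at $(\xi,q)=(1,1)$ every factor $\alpha\beta-q^i\gamma\delta$ collapses to $D:=\alpha\beta-\gamma\delta$. Hence
\[
\widetilde Z_{n,k}(1;\alpha,\beta,\gamma,\delta;1)=D^{\,n-k}\,\sigma^Z_{n,k}(1;\alpha,\beta,\gamma,\delta;1).
\]
It therefore suffices to show that
\[
\sigma^Z_{n,k}=\binom nk \prod_{i=k}^{n-1}\frac{s+ip}{D},
\qquad\text{where } s=\alpha+\beta+\gamma+\delta,\quad p=(\alpha+\gamma)(\beta+\delta).
\]

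First, I specialize \Cref{cor:q=1} to $\xi=1$. Since $2(\alpha\beta+\gamma\delta+\beta\gamma+\alpha\delta)=2p$, this gives
\[
b^Z_n=\frac{s+2np}{D},\qquad \lambda^Z_n=\frac{n\,p\,\bigl(s+(n-1)p\bigr)}{D^2}.
\]
Put $t=p/D$ and $a=s/p$, working in the field of rational functions, where $p\neq 0$. Then $b^Z_n=t(2n+a)$ and $\lambda^Z_n=t^2\,n(n-1+a)$. By \Cref{lem:5},
\[
\sigma^Z_{n,k}=t^{\,n-k}\,\sigma_{n,k}\bigl(\{2m+a\},\{m(m-1+a)\}\bigr).
\]
These recurrence coefficients are those of monic Laguerre polynomials with parameter $a-1$.

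Second, I would prove the Laguerre mixed-moment formula
\[
\sigma_{n,k}\bigl(\{2m+a\},\{m(m-1+a)\}\bigr)=\binom nk\,(a+k)_{n-k},
\]
where $(x)_j=x(x+1)\cdots(x+j-1)$. The proof is by induction on $n$, using the standard recurrence
\[
\sigma_{n+1,k}=\sigma_{n,k-1}+b_k\,\sigma_{n,k}+\lambda_{k+1}\,\sigma_{n,k+1},
\]
which comes from multiplying \eqref{eq:5} by $x$ and applying the three-term recurrence. After dividing by $(a+k)_{n-k}$, the inductive step reduces to the polynomial identity in $a$
\[
\binom{n+1}{k}(a+n)=\binom n{k-1}(a+k-1)+\binom nk(2k+a)+\binom n{k+1}(k+1).
\]
Comparing the coefficients of $a$ gives Pascal's rule. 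The constant terms give $\binom{n+1}{k}n=\binom n{k-1}(k-1)+2k\binom nk+(n-k)\binom nk$, which is a routine binomial check.

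Combining the two steps gives $\sigma^Z_{n,k}=t^{n-k}\binom nk\prod_{i=k}^{n-1}(a+i)=\binom nk\prod_{i=k}^{n-1}(s+ip)/D$, and multiplying by $D^{n-k}$ yields \eqref{eq:3}. I expect the only real obstacle to be the bookkeeping in the second step: aligning the index shift of the Laguerre parameter and checking the constant-term binomial identity. The rescaling via \Cref{lem:5} is routine once the simplification $b^Z_n=t(2n+a)$, $\lambda^Z_n=t^2 n(n-1+a)$ is confirmed from \Cref{cor:q=1}.
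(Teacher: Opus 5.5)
Your proposal is correct and follows essentially the same route as the paper. You reduce $\widetilde Z_{n,k}(1;\alpha,\beta,\gamma,\delta;1)$ to $(\alpha\beta-\gamma\delta)^{n-k}\sigma^Z_{n,k}$ via \Cref{thm:Z_nk} and \Cref{lem:3}, rescale the recurrence coefficients from \Cref{cor:q=1} to those of monic Laguerre polynomials with parameter $s/p-1$, and read off the Laguerre mixed moments. The differences are minor: you rescale once by $t=p/D$ using \Cref{lem:5} and prove the mixed-moment formula $\binom nk\prod_{i=k}^{n-1}(a+i)$ by induction (your binomial check is correct), whereas the paper rescales in two stages with \Cref{lem:5} and \Cref{lem:2} and quotes \eqref{eq:16} as well known.
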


The formula \eqref{eq:3} for the specialization
\( (\gamma,\delta) = (0,0) \) was first obtained by Mandelshtam and
Viennot~\cite[Theorem~2.19]{Mandelshtam2018a}, using a recurrence for
rhombic alternative tableaux. Building on their result, Corteel et
al.~\cite[Proposition~3.16]{CMW} then derived \eqref{eq:3} using the
symmetry of rhombic staircase tableaux at \(q=1\). Below, we give a
short proof of \eqref{eq:3} using the mixed moments of Laguerre
polynomials.

The \emph{monic Laguerre polynomials} \(L_n^{(p)}(x)\) are defined by
\begin{equation}\label{eq:19}
  L_{n+1}^{(p)}(x)
  =
  (x-2n-p-1)L_n^{(p)}(x)
  -
  n(n+p)L_{n-1}^{(p)}(x),
  \qquad n\ge 0,
\end{equation}
with initial conditions \(L_{-1}^{(p)}(x)=0\) and
\(L_0^{(p)}(x)=1\).
It is well known that their mixed moments $\sigma^{(p)}_{n,k}$ are given by
\begin{equation}\label{eq:16}
  \sigma^{(p)}_{n,k} = \frac{n!}{k!}\binom{n+p}{n-k}.
\end{equation}

\begin{proof}[Proof of \Cref{prop:case2}]
From \Cref{cor:q=1} with \(\xi=1\), we have
\[
b_m^Z
=
\frac{
\alpha+\beta+\gamma+\delta
+2m(\alpha+\gamma)(\beta+\delta)
}{
\alpha\beta-\gamma\delta
},
\]
and
\[
\lambda_m^Z
=
\frac{
m(\alpha+\gamma)(\beta+\delta)
\left((m-1)(\alpha+\gamma)(\beta+\delta)
+\alpha+\beta+\gamma+\delta\right)
}{
(\alpha\beta-\gamma\delta)^2
}.
\]
By \Cref{thm:Z_nk}, we have \( Z_{n,k}(1 ; \alpha, \beta, \gamma, \delta ; 1) = \sigma_{n,k}^Z \).
Applying \Cref{lem:3} and \Cref{lem:5}, we obtain
\begin{equation}\label{eq:17}
\widetilde{Z}_{n,k}(1 ; \alpha, \beta, \gamma, \delta ; 1)
= (\alpha\beta-\gamma\delta)^{n-k}
\sigma_{n,k}^Z
= \sigma_{n,k}(\{\widetilde b_m\},\{\widetilde\lambda_m\}),
\end{equation}
where
\(\widetilde b_m = (\alpha\beta-\gamma\delta)b_m^Z \) and \( \widetilde\lambda_m = (\alpha\beta-\gamma\delta)^2\lambda_m^Z \).
For simplicity, set
\[
A = \alpha+\beta+\gamma+\delta,
\qquad
B = (\alpha+\gamma)(\beta+\delta),
\qquad
p = \frac{A}{B} - 1.
\]
Then a direct computation gives
\(\widetilde{b}_m = B(2m+p+1) \) and \( \widetilde{\lambda}_m = B^2m(m+p) \).
Comparing these with the three-term recurrence \eqref{eq:19} of the monic Laguerre polynomials, we obtain
\[
P_n(x;\{\widetilde b_m\},\{\widetilde\lambda_m\}) = B^n L_n^{(p)}(x/B).
\]
Hence, by \Cref{lem:2} and \eqref{eq:16}, we have
\begin{equation}\label{eq:18}
\sigma_{n,k}(\{\widetilde b_m\},\{\widetilde\lambda_m\})
= B^{n-k}\sigma^{(p)}_{n,k} = B^{n-k}\frac{n!}{k!}\binom{n+p}{n-k}= \binom{n}{k}\prod_{j=k}^{n-1}(A+Bj).
\end{equation}
By \eqref{eq:17} and \eqref{eq:18}, we obtain the desired formula.
\end{proof}

For \(\lambda,\mu \in \Par_n\), we define 
\[\widetilde{M}^{Z}_{\lambda,\mu}=
  \det(\widetilde{Z}_{\lambda_i+n-i, \mu_j+n-j}(1;\alpha,\beta,\gamma,\delta;1))_{i, j = 1}^n.\]
Later, we will show that \(\widetilde{M}^{Z}_{\lambda,\mu}\) is the minimal numerator of \(M^{Z}_{\lambda,\mu}(1;\alpha,\beta,\gamma,\delta;1)\).
Using the explicit formula for $\widetilde{Z}_{n,k}$, we
obtain the following expression for $\widetilde{M}^{Z}_{\lambda,\mu}$. 

\begin{prop}\label{thm:q1-explicit}
For any \( \lambda,\mu\in \Par_n \) with \( \mu\subseteq\lambda \), we have
\begin{equation*}
\widetilde{M}^{Z}_{\lambda,\mu}
=
\det\left(\binom{\lambda_i+n-i}{\mu_j+n-j}\right)_{i,j=1}^n
\prod_{z\in\lambda/\mu}
\left(
  \alpha+\beta+\gamma+\delta
  +(c(z)+n-1)(\alpha+\gamma)(\beta+\delta)
\right),
\end{equation*}
where \(c(z)\) is the content of the cell \(z\) in \(\lambda/\mu\).
\end{prop}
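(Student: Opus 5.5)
The plan is to factor the determinant entries by pulling scalars out of rows and columns. By \Cref{prop:case2}, writing $A=\alpha+\beta+\gamma+\delta$ and $B=(\alpha+\gamma)(\beta+\delta)$, each entry is
\[
\widetilde{Z}_{r,s}=\binom{r}{s}\prod_{t=s}^{r-1}(A+Bt)=\binom{r}{s}\frac{F(r)}{F(s)},
\qquad F(m):=\prod_{t=0}^{m-1}(A+Bt),
\]
valid for $r\ge s$. When $r<s$, both sides vanish because $\binom{r}{s}=0$ and $\widetilde{Z}_{r,s}=0$. We work over the field $\QQ(\alpha,\beta,\gamma,\delta)$, where $F(s)\neq 0$, so this identity holds as an equality of rational functions.

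With $r_i=\lambda_i+n-i$ and $s_j=\mu_j+n-j$, we factor $F(r_i)$ out of row $i$ and $F(s_j)^{-1}$ out of column $j$. This gives
\[
\widetilde{M}^{Z}_{\lambda,\mu}=\det\left(\binom{r_i}{s_j}\right)_{i,j=1}^n\prod_{i=1}^n\frac{F(\lambda_i+n-i)}{F(\mu_i+n-i)}.
\]
Since $\mu\subseteq\lambda$, we have $\lambda_i+n-i\ge \mu_i+n-i$, so each ratio is a genuine product:
\[
\frac{F(\lambda_i+n-i)}{F(\mu_i+n-i)}=\prod_{t=\mu_i+n-i}^{\lambda_i+n-i-1}(A+Bt).
\]

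It remains to match this product with the product over cells. Row $i$ of $\lambda/\mu$ consists of the cells $(i,j)$ with $\mu_i<j\le\lambda_i$. Such a cell has content $c=j-i$, so $c+n-1=j-i+n-1$. As $j$ runs from $\mu_i+1$ to $\lambda_i$, this quantity runs over $t=\mu_i+n-i,\dots,\lambda_i+n-i-1$. These are exactly the factors above, and multiplying over $i$ gives the stated product over $z\in\lambda/\mu$.

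There is no serious obstacle. The only care needed is to check that the row/column factorization is legitimate when some entries vanish, which is handled by the convention $\binom{r}{s}=0$ for $r<s$ together with working in the fraction field.
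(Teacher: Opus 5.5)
Your proposal is correct and follows essentially the same route as the paper. Both write each entry via \eqref{eq:3} as $\binom{r_i}{s_j}F(r_i)/F(s_j)$, factor $F(r_i)$ out of the rows and $F(s_j)^{-1}$ out of the columns, and reindex the resulting products cell by cell in $\lambda/\mu$ using $c(z)+n-1=j-i+n-1$. Your remark about entries with $r<s$ is a small piece of care that the paper leaves implicit.
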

\begin{proof}
  Set
  \(A=\alpha+\beta+\gamma+\delta\), \(B=(\alpha+\gamma)(\beta+\delta)\),
  and
  \(F_m=\prod_{k=0}^{m-1}\left(A+kB\right)\). By the definition of
  \(\widetilde{M}^{Z}_{\lambda,\mu}\) and \eqref{eq:3}, we have
  \[
    \widetilde{M}^{Z}_{\lambda,\mu}
    =
    \det\left(
      \binom{\lambda_i+n-i}{\mu_j+n-j}
      \frac{F_{\lambda_i+n-i}}{F_{\mu_j+n-j}}
    \right)_{i,j=1}^n .
  \]
  Factoring out \(F_{\lambda_i+n-i}\) from each row \(i\) and
  \(F_{\mu_j+n-j}^{-1}\) from each column \(j\), we obtain
  \begin{align*}
    \widetilde{M}^{Z}_{\lambda,\mu}
    &=
    \det\left(\binom{\lambda_i+n-i}{\mu_j+n-j}\right)_{i,j=1}^n
    \prod_{i=1}^{n}
    \frac{F_{\lambda_i+n-i}}{F_{\mu_i+n-i}}\\
  &= \det\left(\binom{\lambda_i+n-i}{\mu_j+n-j}\right)_{i,j=1}^n
  \prod_{i=1}^{n}
  \prod_{j=\mu_i+1}^{\lambda_i}
  \left(A+(j-i+n-1)B\right).
\end{align*}
Rewriting the product in terms of the cells in \(\lambda/\mu\), we obtain the desired identity.
\end{proof}

The following proposition shows that
\(\widetilde{M}^{Z}_{\lambda,\mu}\) is a numerator of
\(M^{Z}_{\lambda,\mu}\) at the specialization \(q=1\).

\begin{prop}\label{prop:Z_multi}
For any \( \lambda,\mu\in \Par_n \),  we have
\[
  M^{Z}_{\lambda,\mu}(1;\alpha,\beta,\gamma,\delta;1)
  =\frac{\widetilde{M}^{Z}_{\lambda,\mu}}{(\alpha\beta-\gamma\delta)^{|\lambda|-|\mu|}} .
\]
\end{prop}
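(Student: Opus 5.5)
The plan is to pass the determinant defining \(M^{Z}_{\lambda,\mu}\) through the entrywise relation between \(Z_{n,k}\) and \(\widetilde Z_{n,k}\) at \(q=1\), and then pull the denominators out row by row and column by column.

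\textbf{Step 1 (entry formula).} By \eqref{eq:10},
\[
M^{Z}_{\lambda,\mu}=\det\bigl(Z_{\lambda_i+n-i,\mu_j+n-j}\bigr)_{i,j=1}^n .
\]
By \Cref{lem:3} with \(q=1\), every factor \(\alpha\beta-q^i\gamma\delta\) becomes \(\alpha\beta-\gamma\delta\). The product runs over \(i=2k,\dots,n+k-1\), which is exactly \(n-k\) factors. Hence for \(n\ge k\),
\[
Z_{n,k}(1;\alpha,\beta,\gamma,\delta;1)=\frac{\widetilde Z_{n,k}(1;\alpha,\beta,\gamma,\delta;1)}{(\alpha\beta-\gamma\delta)^{n-k}}.
\]
This is the same identity already recorded as \eqref{eq:17} in the proof of \Cref{prop:case2}. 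For \(n<k\), both sides vanish: \(Z_{n,k}=\sigma^Z_{n,k}=0\) by \Cref{thm:Z_nk}, and \(\widetilde Z_{n,k}=0\) because of the factor \(\binom{n}{k}\) in \eqref{eq:3}. So the identity holds for all \(n,k\ge0\), with the convention that the exponent \(n-k\) may be negative.

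\textbf{Step 2 (factor out denominators).} Write \(r_i=\lambda_i+n-i\) and \(s_j=\mu_j+n-j\). Then \((\alpha\beta-\gamma\delta)^{-(r_i-s_j)}=(\alpha\beta-\gamma\delta)^{-r_i}(\alpha\beta-\gamma\delta)^{s_j}\). Factor \((\alpha\beta-\gamma\delta)^{-r_i}\) out of row \(i\) and \((\alpha\beta-\gamma\delta)^{s_j}\) out of column \(j\). This gives
\[
M^{Z}_{\lambda,\mu}=(\alpha\beta-\gamma\delta)^{-\sum_i r_i+\sum_j s_j}\det\bigl(\widetilde Z_{r_i,s_j}\bigr)=\frac{\widetilde M^{Z}_{\lambda,\mu}}{(\alpha\beta-\gamma\delta)^{|\lambda|-|\mu|}},
\]
since \(\sum_i r_i-\sum_j s_j=|\lambda|-|\mu|\). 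The argument is the same row/column scaling used in the proof of \Cref{thm:q1-explicit}.

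The only point needing care is that entries with \(r_i<s_j\) are zero on both sides, so the scaling remains valid there; this is handled in Step 1. No real obstacle is expected. The statement also holds trivially when \(\mu\not\subseteq\lambda\), since both sides are then zero by the lower triangularity noted after \Cref{lem:4}.
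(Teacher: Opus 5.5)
Your proposal is correct and follows essentially the same route as the paper: apply \Cref{lem:3} at \(q=1\) entrywise to get \(Z_{n,k}=\widetilde Z_{n,k}/(\alpha\beta-\gamma\delta)^{n-k}\), pull the powers of \(\alpha\beta-\gamma\delta\) out of the rows and columns, and identify \(\det(Z_{r_i,s_j})\) with \(M^Z_{\lambda,\mu}\) via \Cref{thm:Z_nk} (equivalently \eqref{eq:10}). Your explicit check that the entries with \(r_i<s_j\) vanish on both sides is a small piece of care that the paper leaves implicit.
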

\begin{proof}
  By the definition of $\widetilde{M}^{Z}_{\lambda,\mu}$ and
  \Cref{lem:3}, we have
\begin{align*}
\widetilde{M}^{Z}_{\lambda, \mu}
&= \det\!\left(
  Z_{\lambda_i+n-i, \mu_j+n-j}(1;\alpha,\beta,\gamma,\delta;1)
(\alpha\beta-\gamma\delta)^{\lambda_i-\mu_j-i+j}
\right)_{i,j=1}^n\\
&= (\alpha\beta-\gamma\delta)^{|\lambda|-|\mu|}\det\!\left(
  Z_{\lambda_i+n-i, \mu_j+n-j}(1;\alpha,\beta,\gamma,\delta;1)
\right)_{i,j=1}^n.
\end{align*}
Since
\(M^{Z}_{\lambda,\mu}(1;\alpha,\beta,\gamma,\delta;1) =
\det(\sigma_{\lambda_i+n-i,\mu_j+n-j}^Z)_{i,j=1}^n \), \Cref{thm:Z_nk}
yields the desired formula.
\end{proof}

The quantity \(\widetilde{M}^{Z}_{\lambda,\mu}\) admits a
combinatorial interpretation in terms of lecture hall tableaux. An
\emph{\(n\)-lecture hall tableau} of shape \(\lambda/\mu\) with bound
\( m \) is a filling \(T\) of the cells \((i,j)\in\lambda/\mu\) with
nonnegative integers satisfying
\[
\frac{T(i,j)}{n+c(i,j)} \ge \frac{T(i,j+1)}{n+c(i,j+1)}, 
\qquad
\frac{T(i,j)}{n+c(i,j)} > \frac{T(i+1,j)}{n+c(i+1,j)},
\qquad 
0 \le \frac{T(i,j)}{n+c(i,j)} < m.
\]
We denote by \(\LHT_{n,m}(\lambda/\mu)\) the set of such tableaux. 

By \cite[Proposition~1.2]{Corteel2020} with \( m=1 \),
for \(\lambda,\mu\in\Par_n\) with \(\mu\subseteq\lambda\),
we have
\begin{equation}\label{eq:21}
\left|\LHT_{n,1}(\lambda / \mu)\right|
=
\det\left(\binom{\lambda_i+n-i}{\mu_j+n-j}\right)_{i,j=1}^{n}.
\end{equation}
By \Cref{thm:q1-explicit} and \eqref{eq:21}, we obtain the
following corollary, which gives a manifestly positive combinatorial
formula for the numerator \( \widetilde{M}^{Z}_{\lambda,\mu} \) of
\(M^{Z}_{\lambda,\mu}\) when \((\xi,q)=(1,1)\).

\begin{cor}\label{cor:1}
For \(\lambda,\mu\in\Par_n\) with \(\mu\subseteq\lambda\), we have
\[
\widetilde{M}^{Z}_{\lambda,\mu}
=
\left|\LHT_{n,1}(\lambda/\mu)\right|
\prod_{z\in\lambda/\mu}
\left(
  \alpha+\beta+\gamma+\delta
  +(c(z)+n-1)(\alpha+\gamma)(\beta+\delta)
\right).
\]
In particular,
the following is a
polynomial in \(\alpha,\beta,\gamma,\delta\) with nonnegative integer
coefficients.
\[
\widetilde{M}^{Z}_{\lambda,\mu}  
=(\alpha\beta-\gamma\delta)^{|\lambda|-|\mu|}
M^{Z}_{\lambda,\mu}(1;\alpha,\beta,\gamma,\delta;1) .
\]
\end{cor}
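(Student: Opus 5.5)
This corollary is an immediate combination of results already established, so the plan is simply to chain them.

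First, for $\mu\subseteq\lambda$, invoke \Cref{thm:q1-explicit}. It expresses $\widetilde{M}^{Z}_{\lambda,\mu}$ as the binomial determinant $\det\bigl(\binom{\lambda_i+n-i}{\mu_j+n-j}\bigr)_{i,j=1}^n$ times the product over cells $z\in\lambda/\mu$ of $\alpha+\beta+\gamma+\delta+(c(z)+n-1)(\alpha+\gamma)(\beta+\delta)$. Replacing the binomial determinant by $\left|\LHT_{n,1}(\lambda/\mu)\right|$ via \eqref{eq:21} (the case $m=1$ of \cite[Proposition~1.2]{Corteel2020}) gives the displayed formula directly.

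For the positivity assertion, observe that $\left|\LHT_{n,1}(\lambda/\mu)\right|$ is a nonnegative integer. Each factor in the product is a polynomial in $\alpha,\beta,\gamma,\delta$ with nonnegative integer coefficients, provided $c(z)+n-1\ge 0$ for every cell. This holds because any cell $(i,j)$ of $\lambda\in\Par_n$ satisfies $i\le n$ and $j\ge1$, so $c(z)=j-i\ge 1-n$. Products of such polynomials again have nonnegative integer coefficients.

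Finally, the identification
\[
\widetilde{M}^{Z}_{\lambda,\mu}=(\alpha\beta-\gamma\delta)^{|\lambda|-|\mu|}M^{Z}_{\lambda,\mu}(1;\alpha,\beta,\gamma,\delta;1)
\]
is exactly \Cref{prop:Z_multi}, rearranged.

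There is no real obstacle here. The only point needing care is the sign check $c(z)+n-1\ge0$, which prevents a negative coefficient on $(\alpha+\gamma)(\beta+\delta)$. If one also wants the case $\mu\not\subseteq\lambda$, both sides vanish there, since $M^Z_{\lambda,\mu}=0$ and the lecture hall set is empty by convention.
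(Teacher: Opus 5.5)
Your proposal is correct and matches the paper's argument: the formula comes from combining \Cref{thm:q1-explicit} with \eqref{eq:21}, and the identification with $M^Z_{\lambda,\mu}(1;\alpha,\beta,\gamma,\delta;1)$ is \Cref{prop:Z_multi}. You also make explicit the check $c(z)+n-1\ge 0$ for each cell $z$, which the paper leaves implicit and which is what makes every factor manifestly nonnegative.
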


The explicit formula for \(\widetilde{M}^{Z}_{\lambda,\mu}\) also
recovers the following known product formula for the Koornwinder
moments
\(M^{Z}_{\lambda}=M^{Z}_{\lambda,\emptyset}(1;\alpha,\beta,\gamma,\delta;1)\).
We include a proof to show how it follows from the specialization
\(\mu=\emptyset\) of our formula.

\begin{cor} \cite[Theorem~8.4]{Corteel2019}
  \label{cor:CW-prod}
  Suppose \( (\xi,q) = (1,1) \).
For a partition \( \lambda \in \Par_n \), we have
\[
 M^{Z}_\lambda
=S^{|\lambda|} \prod_{z \in \lambda}(D+h(z)-1) \cdot \prod_{i=1}^{n-1} \prod_{j=i+1}^n \frac{\left(D+\lambda_i-\lambda_j+j-i-1\right)\left(\lambda_i-\lambda_j+j-i\right)}{(D+j-i-1)(j-i)} ,
\]
where \( h(z) \) is the hook length and
\[
D=\frac{\alpha+\beta+\gamma+\delta}{(\alpha+\gamma)(\beta+\delta)} , \qquad 
S=\frac{(\alpha+\gamma)(\beta+\delta)}{\alpha \beta-\gamma \delta} .
\]
\end{cor}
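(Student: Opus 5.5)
We specialize \Cref{prop:Z_multi} and \Cref{thm:q1-explicit} to \(\mu=\emptyset\). With \(A=\alpha+\beta+\gamma+\delta\) and \(B=(\alpha+\gamma)(\beta+\delta)\), we have \(D=A/B\) and \(S=B/(\alpha\beta-\gamma\delta)\). These results give
\[
M^Z_\lambda=\frac{\widetilde M^Z_{\lambda,\emptyset}}{(\alpha\beta-\gamma\delta)^{|\lambda|}}
= S^{|\lambda|}\det\left(\binom{\lambda_i+n-i}{n-j}\right)_{i,j=1}^n\prod_{z\in\lambda}\bigl(D+c(z)+n-1\bigr),
\]
since each factor \(A+(c(z)+n-1)B\) equals \(B\,(D+c(z)+n-1)\). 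It therefore suffices to prove the purely combinatorial identity
\[
\det\left(\binom{\lambda_i+n-i}{n-j}\right)\prod_{z\in\lambda}(D+c(z)+n-1)
=\prod_{z\in\lambda}(D+h(z)-1)\prod_{i<j}\frac{(D+\lambda_i-\lambda_j+j-i-1)(\lambda_i-\lambda_j+j-i)}{(D+j-i-1)(j-i)}.
\]
We regard both sides as polynomials in \(D\), so it is enough to prove it for integer \(D\) large.

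For the determinant, write \(\ell_i=\lambda_i+n-i\). The polynomials \(\binom{x}{n-j}\), \(j=1,\dots,n\), span polynomials of degree \(<n\), and their leading coefficients are \(1/(n-j)!\). Reducing to the Vandermonde determinant gives
\[
\det\left(\binom{\ell_i}{n-j}\right)=\prod_{i<j}\frac{\ell_i-\ell_j}{j-i}.
\]
This is exactly the factor \(\prod_{i<j}(\lambda_i-\lambda_j+j-i)/(j-i)\), which is \(s_\lambda(1^n)\). This step is routine.

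What remains is to show
\[
\prod_{z\in\lambda}(D+c(z)+n-1)=\prod_{z\in\lambda}(D+h(z)-1)\prod_{i<j}\frac{D+\ell_i-\ell_j-1}{D+j-i-1}.
\]
This is the hook-content-type identity saying that \(s_\lambda(1^N)\) can be computed both by the hook-content formula and by the Weyl dimension formula. Concretely, put \(N=D+n-1\) and work row by row. The product \(\prod_{j=1}^{\lambda_i}(D+n-1+j-i)\) equals \((D+\ell_i-1)!/(D+n-i-1)!\) as a ratio of Gamma functions. It is then enough to show
\[
\prod_{z\in\lambda}(D+h(z)-1)=\frac{\prod_i (D+\ell_i-1)!/(D+n-i-1)!\cdot\prod_{i<j}(D+j-i-1)}{\prod_{i<j}(D+\ell_i-\ell_j-1)}.
\]
This is the shifted analogue of the classical identity \(\prod_z h(z)=\prod_i \ell_i!/\prod_{i<j}(\ell_i-\ell_j)\). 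The row-\(i\) hooks \(\{h(i,j)\}\) together with \(\{\ell_i-\ell_j : j>i\}\) form exactly \(\{1,\dots,\ell_i\}\), by the complementary-sets fact cited from Macdonald in the proof of \Cref{lem:10}. Shifting every element by \(D-1\) and multiplying over \(i\) gives
\[
\prod_{z}(D+h(z)-1)\prod_{i<j}(D+\ell_i-\ell_j-1)=\prod_i\frac{(D+\ell_i-1)!}{(D-1)!}.
\]
Also \(\prod_i (D+n-i-1)!/(D-1)!=\prod_{i<j}(D+j-i-1)\), which is the same identity applied to \(\lambda=\emptyset\). Combining these two identities proves the claim.

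The main obstacle is this last bookkeeping step. One must check the shifted complement identity row by row, including the boundary conventions, and handle rows with \(\lambda_i=0\), where the row has no hooks and the set identity still gives \(\{1,\dots,\ell_i\}\). Once that is verified, the identity holds for all large integers \(D\), hence as a polynomial identity, and the corollary follows.
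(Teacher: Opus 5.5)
Your proof is correct, and it takes a somewhat different route from the paper's. The paper specializes \Cref{cor:1} to \(\mu=\emptyset\) and uses the lecture hall tableau count \(|\LHT_{n,1}(\lambda)|=\prod_{z\in\lambda}(n+c(z))/h(z)\) from Corteel--Kim. This gives \(M^Z_\lambda=S^{|\lambda|}\prod_{z}\frac{n+c(z)}{h(z)}(D+c(z)+n-1)\), and the paper then needs two hook/content-to-Weyl conversions, both taken from Stanley's argument for Lemma~7.21.1 and Theorem~7.21.2.

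You instead evaluate \(\det\bigl(\binom{\ell_i}{n-j}\bigr)\) directly as a rescaled Vandermonde determinant. This yields the Weyl factor \(\prod_{i<j}(\lambda_i-\lambda_j+j-i)/(j-i)\) immediately, without passing through the hook-content form, so only the \(D\)-shifted identity remains. You prove that identity from the row-wise fact \(\{h(i,j)\}_{j\le\lambda_i}\cup\{\ell_i-\ell_k\}_{k>i}=\{1,\dots,\ell_i\}\) and its \(\lambda=\emptyset\) case. That fact does follow by applying Macdonald's (1.7) to \((\lambda_i,\dots,\lambda_n)\) with \(\lambda_i\) columns and subtracting from \(\ell_i\).

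Your route is more self-contained: it needs neither the lecture hall tableau enumeration nor the external hook-length references, and it avoids the paper's detour from determinant to hook-content form and back to Weyl form. The paper's route, in exchange, ties the product formula directly to its lecture hall tableau interpretation.

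Two cosmetic points. Both sides are rational, not polynomial, in \(D\). There is also no need to restrict to large integer \(D\): \((D+\ell_i-1)!/(D-1)!\) just denotes the rising product \(D(D+1)\cdots(D+\ell_i-1)\), so your argument works verbatim for an indeterminate \(D\).
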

\begin{proof}
By \cite[Corollary~1.3]{Corteel2020} with \( m=1 \), we have
\[
  \left|\LHT_{n,1}(\lambda)\right| =\prod_{z\in\lambda} \frac{n+c(z)}{h(z)}.
\]
Hence, by \Cref{prop:Z_multi} and \Cref{cor:1}, we obtain
\[
M^{Z}_\lambda
=
S^{|\lambda|}
\prod_{z\in\lambda}
\frac{n+c(z)}{h(z)}
\left(D+c(z)+n-1\right).
\]
Using the diagrammatic argument in the proofs of Lemma~7.21.1 and
Theorem~7.21.2 of \cite{Stanley1999}, we obtain the identities
\[
\prod_{z\in\lambda}\frac{n+c(z)}{h(z)}
=
\prod_{1\le i<j\le n}
\frac{\lambda_i-\lambda_j+j-i}{j-i},
\]
and
\[
\prod_{z\in\lambda}\frac{D+n+c(z)-1}{D+h(z)-1}
=
\prod_{1\le i<j\le n}
\frac{D+\lambda_i-\lambda_j+j-i-1}{D+j-i-1}.
\]
Combining these identities completes the proof.
\end{proof}

\section{Minimal Denominators}
\label{sec:minimal-denominators}

In the previous sections, we obtained denominators for
\(M^Z_{\lambda,\mu}\) in the cases \(\mu=\emptyset\), \((\xi,q)=(1,0)\),
and \((\xi,q)=(1,1)\), together with positive numerators in the latter
two specializations. In this section, we first prove a general lemma showing that, under
a \((0,1)\)-specialization, any such positive numerator yields the
positivity of the minimal numerator. This establishes the generalized Rains'
conjecture for \((\xi,q)=(1,0)\) and \((\xi,q)=(1,1)\).
We then prove the minimality of the denominators
obtained earlier in the cases \(\mu=\emptyset\) and \((\xi,q)=(1,1)\),
and determine the minimal denominator in the case \((\xi,q)=(1,0)\).

We begin with the following lemma. Recall that, by definition, every
\( (0,1) \)-substitution map \( \varphi \) satisfies
\( \varphi(\alpha\beta-q^k \gamma\delta)\ne 0 \) for all \( k\ge0 \).

\begin{lem}\label{lem:7}
  Let \( k\ge0 \) be an integer and let \( \varphi \) be a
  \( (0,1) \)-substitution map satisfying
  \( \varphi(\alpha\beta)\ne 0 \) and
  \( \varphi(q^k\gamma\delta)\ne 0 \). Let \( p \) be a rational
  function such that \( \varphi(p) \) is a nonzero polynomial in
  \( \xi,\alpha,\beta,\gamma,\delta \), and \( q \) with real
  coefficients. Then, the polynomial
  \( \varphi((\alpha\beta-q^k \gamma\delta) p) \) has a negative
  coefficient.
\end{lem}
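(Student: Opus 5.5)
The plan is to reduce the statement to an evaluation argument at the all-ones point. The key facts are that $\varphi$ acts as a ring homomorphism on polynomials and that $\varphi$ sends the two monomials $\alpha\beta$ and $q^k\gamma\delta$ to distinct nonzero monomials, each with coefficient $1$.

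First, since $\varphi$ only substitutes $0$ or $1$ for some of the variables, it is compatible with products. So
\[
  \varphi\bigl((\alpha\beta-q^k\gamma\delta)p\bigr)=\bigl(\varphi(\alpha\beta)-\varphi(q^k\gamma\delta)\bigr)\,\varphi(p).
\]
I read $\varphi(p)$ as the polynomial obtained after substitution; if $p$ is merely rational, the identity still holds in the fraction field, and the right-hand side is then a product of polynomials. Write $a=\varphi(\alpha\beta)$ and $b=\varphi(q^k\gamma\delta)$. The image under $\varphi$ of a monomial with coefficient $1$ is either $0$ or again a monomial with coefficient $1$. By hypothesis $a\ne0$ and $b\ne0$, so both are monomials with coefficient $1$. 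Moreover $a\neq b$, because the definition of a $(0,1)$-substitution map requires $\varphi(\alpha\beta-q^k\gamma\delta)=a-b$ to be non-constant.

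Second, I show that the product $F=(a-b)\varphi(p)$ is a nonzero polynomial. This holds because the polynomial ring over $\mathbb{R}$ is an integral domain, $a-b\neq0$, and $\varphi(p)\ne0$ by hypothesis.

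Third, I evaluate $F$ at $\xi=\alpha=\beta=\gamma=\delta=q=1$. Every monomial with coefficient $1$ evaluates to $1$, so $a-b$ evaluates to $1-1=0$, and hence $F(1,\dots,1)=0$. On the other hand, suppose $F$ had no negative coefficient. Since $F$ is nonzero, $F(1,\dots,1)$ would then equal the sum of its coefficients, which would be strictly positive. This contradiction shows that $F$ has a negative coefficient.

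I expect no serious obstacle. The only point needing care is the first step: checking that $a$ and $b$ are genuine distinct monomials with coefficient exactly $1$, and that $\varphi$ commutes with multiplication here. Both follow directly from the definition of a $(0,1)$-substitution map and the hypotheses $\varphi(\alpha\beta)\ne0$ and $\varphi(q^k\gamma\delta)\ne0$.
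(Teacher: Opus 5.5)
Your proof is correct and follows essentially the same argument as the paper: at the all-ones point $\varphi(\alpha\beta)-\varphi(q^k\gamma\delta)$ vanishes, while a nonzero polynomial with nonnegative coefficients is strictly positive there. The only difference is the order of the steps. You first show the product is nonzero using the integral-domain property. The paper instead deduces from nonnegativity that the product is zero, and then contradicts $\varphi(p)\neq 0$ using $\varphi(\alpha\beta-q^k\gamma\delta)\neq 0$.
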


\begin{proof}
  Suppose that the coefficients of
  \( \varphi((\alpha\beta-q^k \gamma\delta) p) \)
   are all nonnegative.
   Since \( \varphi(\alpha\beta)\ne 0 \) and \( \varphi(q^k\gamma\delta)\ne 0 \), we have
   \[
     \varphi((\alpha\beta-q^k \gamma\delta)p)|_{(\alpha,\beta,\gamma,\delta,q)=(1,1,1,1,1)}
     = 0.
   \]
   Then, since \( \varphi((\alpha\beta-q^k \gamma\delta) p) \) is a
   polynomial with nonnegative coefficients, it must be the zero
   polynomial. Since \( \varphi(\alpha\beta-q^k \gamma\delta)\ne0 \),
   we obtain \( \varphi(p)=0 \), which is a contradiction. Thus
   \( \varphi((\alpha\beta-q^k \gamma\delta) p) \) must have a negative
   coefficient.
\end{proof}

Now we are ready to prove \Cref{lem:9}.
\begin{proof}[Proof of \Cref{lem:9}]
  The ``only if'' part is obvious. For the ``if'' part, suppose that
  \( \varphi(D M_{\lambda, \mu}^Z) \) is a polynomial with
  nonnegative integer coefficients, where
  \( D = \prod_{i \in I}(\alpha\beta - q^i\gamma\delta) \) and \( I \)
  is a multiset of nonnegative integers. Then the minimal numerator
  \( \varphi(D_{\min} M_{\lambda, \mu}^Z) \) of \( \varphi(M_{\lambda, \mu}^Z) \) is
  given by \( \varphi((D/R) M_{\lambda, \mu}^Z) \),
  where \( R = \prod_{i \in J}(\alpha\beta - q^i\gamma\delta) \) for
  some multiset \( J \) of nonnegative integers.

  We claim that \( \varphi(q^k\gamma\delta)=0 \) for all
  \( k \in J \). To see this, suppose
  \( \varphi(q^k\gamma\delta) \neq 0 \) for some \( k \in J \). Let
  \[
    p = \frac{D}{\alpha\beta - q^k\gamma\delta}  \cdot M_{\lambda, \mu}^Z.
  \]
Then
\[
  \varphi(p) = \varphi \left( \frac{R}{\alpha\beta - q^k\gamma\delta} \cdot \frac{D}{R} \cdot
  M_{\lambda, \mu}^Z \right)
= \varphi \left( \frac{R}{\alpha\beta - q^k\gamma\delta}   \right)
\varphi \left( \frac{D}{R} M_{\lambda, \mu}^Z \right)
  \]
  is a polynomial. Since \( \varphi(\alpha\beta) \neq 0 \) and
  \( \varphi(q^k\gamma\delta) \neq 0 \), by \Cref{lem:7},
  \( \varphi(D M_{\lambda, \mu}^Z) = \varphi((\alpha\beta -
  q^k\gamma\delta)p ) \) has a negative coefficient, which is a
  contradiction. Therefore, the claim holds.

  By the claim, we have \( \varphi(R)=\varphi((\alpha\beta)^{|J|}) \),
  which is a monomial. Hence,
  \( \varphi(D_{\min} M_{\lambda, \mu}^Z) = \varphi(D M_{\lambda,
    \mu}^Z)/\varphi((\alpha\beta)^{|J|}) \) is a polynomial with
  nonnegative integer coefficients.
\end{proof}

Using \Cref{lem:9}, we prove the generalized Rains' conjecture under the two
specializations \( (\xi, q) = (1, 0) \) and \( (\xi, q) = (1, 1) \) as
follows.

\begin{thm}\label{thm:minimal-q0}
For \(\lambda,\mu\in\Par_n\), the minimal numerator of \(M^Z_{\lambda,\mu}(1;\alpha,\beta,\gamma,\delta;0)\) is a polynomial with
nonnegative integer coefficients.
\end{thm}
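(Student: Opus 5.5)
The natural route is to combine \Cref{cor:4} with \Cref{lem:9}. Let \( \varphi \) be the \( (0,1) \)-substitution map sending \( \xi\mapsto 1 \) and \( q\mapsto 0 \), leaving \( \alpha,\beta,\gamma,\delta \) free. We first check that \( \varphi \) is admissible. Under \( \varphi \) we have \( \varphi(\alpha\beta-\gamma\delta)=\alpha\beta-\gamma\delta \) and \( \varphi(\alpha\beta-q^i\gamma\delta)=\alpha\beta \) for \( i\ge1 \). Both are non-constant polynomials, so \( \varphi \) is a valid \( (0,1) \)-substitution map with \( \varphi(\alpha\beta)=\alpha\beta\ne0 \). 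Hence \Cref{lem:9} applies.

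By \Cref{lem:9}, it suffices to exhibit a single \( D=\prod_{i\in I}(\alpha\beta-q^i\gamma\delta) \) such that \( \varphi(D M^Z_{\lambda,\mu}) \) is a polynomial with nonnegative integer coefficients. Fix \( m\ge\max(\lambda_1,\mu_1,1) \) and take
\[
  D=(\alpha\beta-q^{\epsilon}\gamma\delta)\prod_{i=1}^{m-1}(\alpha\beta-q\gamma\delta),
\]
where \( \epsilon=0 \) if \( (\lambda'_1,\mu'_1)=(n,n-1) \) and \( \epsilon=1 \) otherwise. Then \( \varphi(D)=(\alpha\beta-\vec1_{(\lambda'_1,\mu'_1)=(n,n-1)}\gamma\delta)(\alpha\beta)^{m-1} \). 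By \Cref{cor:4}, \( \varphi(D)\,M^Z_{\lambda,\mu}(1;\alpha,\beta,\gamma,\delta;0) \) is a polynomial with nonnegative integer coefficients. Since \( \varphi \) is a ring homomorphism, this product equals \( \varphi(DM^Z_{\lambda,\mu}) \).

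Note that \Cref{lem:9} is stated for the minimal denominator \( \varphi(D_{\min}) \), whose existence is part of the setup. Its conclusion gives exactly that \( \varphi(D_{\min}M^Z_{\lambda,\mu}) \), the minimal numerator, has nonnegative integer coefficients.

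The only real point of care is the degenerate case \( M^Z_{\lambda,\mu}=0 \), for instance when \( \mu\not\subseteq\lambda \). There the minimal numerator is \( 0 \), which trivially has nonnegative coefficients. Otherwise, the argument reduces entirely to the earlier results. The substantive work, namely the LGV positivity in \Cref{thm:1} and the duality in \Cref{lem:10}, is already contained in \Cref{cor:4}, so the remaining obstacle is just bookkeeping: choosing the multiset \( I \) so that \( \varphi(D) \) matches the denominator in \Cref{cor:4}.
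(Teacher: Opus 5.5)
Your proposal is correct and is the paper's argument: apply \Cref{cor:4} to obtain a $D$ of the required form with $\varphi(DM^Z_{\lambda,\mu})$ having nonnegative integer coefficients, then use \Cref{lem:9} to pass positivity to the minimal numerator. Your explicit choice of $D$ (using factors $\alpha\beta-q\gamma\delta\mapsto\alpha\beta$), the check that $\varphi$ is admissible, and the remark on the zero case are bookkeeping that the paper leaves implicit.
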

\begin{proof}
  Let \(\varphi\) be the \( (0,1) \)-substitution map defined by
  \((\xi,q)\mapsto(1,0)\). By \Cref{cor:4}, there is a polynomial
  \(D=\prod_{i\in I}(\alpha\beta-q^i\gamma\delta)\), where \( I \) is
  a multiset of nonnegative integers, such that
  \(\varphi(DM^Z_{\lambda,\mu})\) is a polynomial with nonnegative
  integer coefficients. Since \(\varphi(\alpha\beta)\neq0\), by
  \Cref{lem:9}, the minimal numerator of
  \(\varphi(M^Z_{\lambda,\mu}) =
  M^Z_{\lambda,\mu}(1;\alpha,\beta,\gamma,\delta;0)\) also has
  nonnegative integer coefficients.
\end{proof}

By the same argument with \Cref{cor:1}, we obtain the following theorem.

\begin{thm}\label{thm:2}
  For \( \lambda,\mu\in\Par_n \) with \( \mu\subseteq\lambda \),
  the minimal numerator of \( M^Z_{\lambda,\mu}(1;\alpha,\beta,\gamma,\delta;1) \) is a polynomial with nonnegative integer coefficients.
\end{thm}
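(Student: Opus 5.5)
The plan mirrors the proof of \Cref{thm:minimal-q0}, with \Cref{cor:1} in place of \Cref{cor:4}. Let \(\varphi\) be the \((0,1)\)-substitution map \((\xi,q)\mapsto(1,1)\).

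First we check that \(\varphi\) is admissible. For every \(i\ge0\) we have \(\varphi(\alpha\beta-q^i\gamma\delta)=\alpha\beta-\gamma\delta\), which is a non-constant polynomial. Moreover \(\varphi(\alpha\beta)=\alpha\beta\neq0\). Hence \Cref{lem:9} applies to \(\varphi(M^Z_{\lambda,\mu})=M^Z_{\lambda,\mu}(1;\alpha,\beta,\gamma,\delta;1)\).

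Next we produce a positive numerator of the required shape. Take the multiset \(I\) consisting of \(|\lambda|-|\mu|\) copies of \(0\), and set \(D=(\alpha\beta-\gamma\delta)^{|\lambda|-|\mu|}\). Then
\[
\varphi(DM^Z_{\lambda,\mu})=(\alpha\beta-\gamma\delta)^{|\lambda|-|\mu|}M^Z_{\lambda,\mu}(1;\alpha,\beta,\gamma,\delta;1)=\widetilde M^Z_{\lambda,\mu},
\]
using \Cref{prop:Z_multi}. Since \(\mu\subseteq\lambda\), \Cref{cor:1} gives
\[
\widetilde M^Z_{\lambda,\mu}=\left|\LHT_{n,1}(\lambda/\mu)\right|\prod_{z\in\lambda/\mu}\bigl(A+(c(z)+n-1)B\bigr),
\]
where \(A=\alpha+\beta+\gamma+\delta\) and \(B=(\alpha+\gamma)(\beta+\delta)\). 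Every cell \(z\in\lambda/\mu\) satisfies \(c(z)\ge 1-n\), because \(\lambda\in\Par_n\) forces the row index to be at most \(n\). So each factor \(c(z)+n-1\) is a nonnegative integer, and \(\widetilde M^Z_{\lambda,\mu}\) is a polynomial with nonnegative integer coefficients.

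Finally, \Cref{lem:9} transfers this positivity to the minimal numerator \(\varphi(D_{\min}M^Z_{\lambda,\mu})\), which proves the theorem.

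The one point to watch is the sign of \(c(z)+n-1\), which the bound \(c(z)\ge1-n\) settles. Note that \(\widetilde M^Z_{\lambda,\mu}\) need not be nonzero a priori, but \Cref{lem:9} does not require this.
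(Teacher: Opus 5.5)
Your proposal is correct and is essentially the paper's own proof: the paper proves this theorem ``by the same argument'' as \Cref{thm:minimal-q0}. That is, it takes $\varphi\colon(\xi,q)\mapsto(1,1)$ and $D=(\alpha\beta-\gamma\delta)^{|\lambda|-|\mu|}$, so that $\varphi(DM^Z_{\lambda,\mu})=\widetilde M^Z_{\lambda,\mu}$ has nonnegative integer coefficients by \Cref{prop:Z_multi} and \Cref{cor:1}, and then applies \Cref{lem:9}. Your side remark about vanishing is moot anyway: for $\mu\subseteq\lambda$ the binomial determinant in \Cref{thm:q1-explicit} is positive, so $\widetilde M^Z_{\lambda,\mu}\neq 0$.
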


Finally, we present explicit formulas for the minimal denominators of
\( M_{\lambda, \mu}^Z \) in three cases:
\( \mu = \emptyset \), \(  (\xi, q) = (1, 0) \), and \( (\xi, q) = (1, 1) \).
For the first case, the following proposition shows that the denominator of \( M_{\lambda}^Z \)
in \Cref{prop:deno} is the minimal denominator.

\begin{prop}\label{prop:den}
For \( \lambda \in \Par_n \), the following is the minimal denominator
of \(M^{Z}_\lambda\):
\begin{equation}\label{eq:7}
  \prod_{(i,j)\in\lambda}
  \left(\alpha\beta-q^{2n-2-i+j}\gamma\delta\right).
\end{equation}
\end{prop}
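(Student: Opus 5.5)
The plan is to compare the denominator \eqref{eq:7} with the reduced form of \(M^Z_\lambda\), and then specialize to \((\xi,q)=(1,1)\), where \Cref{cor:1} gives complete information. Let \(D=\prod_{(i,j)\in\lambda}(\alpha\beta-q^{2n-2-i+j}\gamma\delta)\). By \Cref{prop:deno}, \(N:=D\,M^Z_\lambda\) is a polynomial.

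First I reduce minimality to a statement about divisibility of \(N\). Distinct factors \(\alpha\beta-q^k\gamma\delta\) are pairwise non-associate irreducible polynomials. Hence, writing \(M^Z_\lambda\) in lowest terms, the minimal denominator \(D_{\min}\) divides every denominator of the allowed form, and in particular \(D_{\min}\mid D\). If \(D_{\min}\neq D\), then \(N=(D/D_{\min})\cdot(D_{\min}M^Z_\lambda)\) is divisible, as a polynomial, by some factor \(\alpha\beta-q^k\gamma\delta\). So it suffices to show that no factor \(\alpha\beta-q^k\gamma\delta\) with \(k\ge0\) divides \(N\).

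Next I specialize to \((\xi,q)=(1,1)\). Suppose \(N=(\alpha\beta-q^k\gamma\delta)G\) with \(G\) a polynomial. Setting \(\xi=q=1\) gives \(N|_{\xi=q=1}=(\alpha\beta-\gamma\delta)\,G|_{\xi=q=1}\). On the other hand, every factor of \(D\) becomes \(\alpha\beta-\gamma\delta\) at \(q=1\), which is nonzero. Therefore the specialization can be taken factor by factor, and
\[
N|_{\xi=q=1}=(\alpha\beta-\gamma\delta)^{|\lambda|}M^Z_\lambda(1;\alpha,\beta,\gamma,\delta;1)=\widetilde M^Z_{\lambda,\emptyset}
\]
by \Cref{prop:Z_multi}. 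One should check that specializing the rational function \(M^Z_\lambda\) at \(q=1\) agrees with \(M^Z_\lambda(1;\alpha,\beta,\gamma,\delta;1)\) as defined via the \(q=1\) recurrence coefficients of \Cref{cor:q=1}. This holds because \(M^Z_\lambda\) is a polynomial expression in the \(b^Z_m,\lambda^Z_m\), and their denominators do not vanish at \(q=1\).

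Finally I show that \(\alpha\beta-\gamma\delta\) does not divide \(\widetilde M^Z_{\lambda,\emptyset}\). By \Cref{cor:1},
\[
\widetilde M^Z_{\lambda,\emptyset}=\left|\LHT_{n,1}(\lambda)\right|\prod_{z\in\lambda}\bigl(A+(c(z)+n-1)B\bigr),
\qquad A=\alpha+\beta+\gamma+\delta,\quad B=(\alpha+\gamma)(\beta+\delta).
\]
The constant \(\left|\LHT_{n,1}(\lambda)\right|=\prod_{z\in\lambda}(n+c(z))/h(z)\) is positive, since \(n+c(z)\ge1\) for \(\lambda\in\Par_n\). Each factor has \(c(z)+n-1\ge0\), so it is a nonzero polynomial with nonnegative coefficients. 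Substituting \(\alpha=\gamma\delta/\beta\) turns each such factor into a nonzero Laurent polynomial with nonnegative coefficients, so no cancellation occurs and none of them vanishes. Hence \(\alpha\beta-\gamma\delta\) divides no factor, and by irreducibility it does not divide the product. This contradicts the previous step, so \(D_{\min}=D\).

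I expect the main obstacle to be justifying the specialization cleanly. In particular, the \(\sqrt{\xi}\) appearing in the definitions must cancel so that \(N\) is a genuine polynomial in \(\xi,\alpha,\beta,\gamma,\delta,q\). I would check this via \Cref{prop:reparametrization}, where \(b^Z_n\) and \(\lambda^Z_n\) are rational in all parameters with no square roots.
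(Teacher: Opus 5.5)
Your proposal is correct and follows essentially the same route as the paper: specialize to \((\xi,q)=(1,1)\), identify the specialized numerator with \(\widetilde M^Z_{\lambda,\emptyset}\) via \Cref{prop:Z_multi}, and get a contradiction because \(\alpha\beta-\gamma\delta\) cannot divide it. The only differences are minor. You spell out why non-minimality forces some factor \(\alpha\beta-q^k\gamma\delta\) to divide \(D\,M^Z_\lambda\), which the paper leaves implicit. You also exclude divisibility by \(\alpha\beta-\gamma\delta\) factor by factor, using the product formula of \Cref{cor:1} and the substitution \(\alpha=\gamma\delta/\beta\), whereas the paper uses nonnegativity of coefficients together with evaluation at \(\alpha=\beta=\gamma=\delta=1\) (\Cref{lem:7}).
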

\begin{proof}
  Let \(f\) be the polynomial in \eqref{eq:7}. By \Cref{prop:deno},
  \(f\) is a denominator of \(M^{Z}_\lambda\). To prove the minimality
  of \( f \), suppose that
  \( p:= M^{Z}_\lambda f/\left(\alpha\beta-q^k\gamma\delta\right)\) is a
  polynomial for some factor \(\alpha\beta-q^k\gamma\delta\) of \(f\).
  Let \(\varphi\) be the \( (0,1) \)-substitution map defined by
  \((\xi,q)\mapsto(1,1)\).
  Then
  \[
   \varphi(p)
    = M^{Z}_\lambda(1;\alpha,\beta,\gamma,\delta;1) (\alpha\beta-\gamma\delta)^{|\lambda|-1}
  \]
  is a polynomial. By \Cref{prop:Z_multi}, we have
\( \widetilde{M}^{Z}_{\lambda,\emptyset} = \varphi((\alpha\beta-\gamma\delta)p) \).
Then by \Cref{lem:7} with \(k=0\),
we obtain that
\(\widetilde{M}^{Z}_{\lambda,\emptyset}\) has a negative
coefficient, which contradicts \Cref{cor:1}. Therefore \(f\) is the
minimal denominator of \(M^{Z}_\lambda\).
\end{proof}

For the case \( (\xi, q) = (1, 0) \), we first determine the denominator
of \(N^Z_{\lambda,\mu}\).

\begin{prop}\label{prop:minimal_q0xi1}
  For \( \lambda,\mu\in\Par_n \) with \( \mu\subseteq\lambda \),
  suppose that \( \lambda / \mu \) has \( m \) nonempty rows. Then the
  following is the minimal denominator of
  \( N^Z_{\lambda, \mu}(1; \alpha, \beta, \gamma, \delta; 0) \):
  \[
    (\alpha\beta - {\vec 1}_{(\lambda_n,\mu_n)=(1,0)}\gamma\delta)
    (\alpha\beta)^{m-1}.
  \]
\end{prop}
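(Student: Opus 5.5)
The plan is to prove an upper bound and a lower bound. For the upper bound we show that $(\alpha\beta-\vec1_{(\lambda_n,\mu_n)=(1,0)}\gamma\delta)(\alpha\beta)^{m-1}$ is a denominator of $N^Z_{\lambda,\mu}$. For the lower bound we show that no factor of it can be removed. Throughout, $\varphi$ denotes $(\xi,q)\mapsto(1,0)$. Under $\varphi$, every factor $\alpha\beta-q^i\gamma\delta$ becomes $\alpha\beta$ if $i\ge1$ and $\alpha\beta-\gamma\delta$ if $i=0$. So a minimal denominator has the form $(\alpha\beta-\gamma\delta)^a(\alpha\beta)^b$, and we must pin down $a$ and $b$.

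\emph{Upper bound.} Put $r_i=\lambda_i+n-i$ and $s_j=\mu_j+n-j$. Suppose row $i$ of $\lambda/\mu$ is empty, so $r_i=s_i$. For $k\ge i$ and $j<i$ we have $s_j>s_i=r_i\ge r_k$, hence $\nu^Z_{r_k,s_j}=0$. Thus the matrix $(\nu^Z_{r_k,s_j})$ is block upper triangular with respect to the split $\{1,\dots,i-1\}\cup\{i,\dots,n\}$. Moreover row $i$ of the lower block begins with $\nu^Z_{r_i,s_i}=1$ followed by entries $\nu^Z_{r_i,s_j}$ with $s_j<r_i$, and the column of that $1$ is zero below it. Iterating, $N^Z_{\lambda,\mu}$ factors as a product of determinants $\det(\nu^Z_{r_k,s_j})_{k,j\in R}$, one for each maximal run $R$ of consecutive nonempty rows of $\lambda/\mu$; empty rows contribute $1$.

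Within such a run every entry has $r_k\ne s_j$ or is a diagonal $1$. By \eqref{eq:13}, each entry with $r_k>s_j$ has denominator $\alpha\beta$, except the entry $(r,s)=(1,0)$, which has denominator $\alpha\beta-\gamma\delta$. That entry can only lie in row $n$ when $(\lambda_n,\mu_n)=(1,0)$, since if $r_{n-1}=1$ then row $n-1$ is empty and split off. Clearing denominators row by row in each run gives the bound; alternatively, one can apply the argument of \Cref{lem:8} to the last run to handle the $(1,0)$ entry, including the $\gamma\delta$ correction of case~(3). As a consistency check, for $n=1$ we get $\nu_{k,0}$ with denominator $\alpha\beta$, matching $m=1$.

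\emph{Lower bound.} We must show that dividing by $\alpha\beta-\gamma\delta$ (when present) or by one more $\alpha\beta$ destroys polynomiality. For the factor $\alpha\beta-\gamma\delta$, we argue as in \Cref{lem:9} and \Cref{lem:7}: by the LGV description (\Cref{thm:1}, \Cref{lem:8}, \Cref{pro:1}), the numerator $(-1)^{|\lambda|-|\mu|}\,\cdot$ denominator $\cdot\, N^Z_{\lambda,\mu}$ has nonnegative coefficients. If it were divisible by $\alpha\beta-\gamma\delta$, it would vanish at $\alpha=\beta=\gamma=\delta=1$ and hence be zero, which is impossible.

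For the factors $\alpha\beta$, we show that the numerator (after the run factorization) is not divisible by $\beta$, or not by $\alpha$. For a run of $\ell$ nonempty rows, the run's numerator is the run determinant with each row multiplied by its own denominator. Up to the reindexing above, this is a nonintersecting path sum as in \Cref{thm:1}. It suffices to exhibit one family of nonintersecting paths $\pi_k\in P(u_{s_k}\to v_{r_k})$ in which no step carries weight $\beta$ or $\alpha\beta$. For instance, each path can leave the diagonal band using a $\gamma$ or $\delta$ diagonal step rather than the $\beta$ south step on $y=x$, and then travel east and south in the weight-$1$ region. This forces the numerator to be nonzero at $\beta=0$, so no further $\alpha\beta$ divides it.

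The main obstacle is this construction. We must check that such $\beta$-free families exist and remain nonintersecting for every run, including the boundary vertices $v_0,v_1$ and the row containing the $(1,0)$ entry. We also must ensure that the run determinants, with their nonstandard starting indices, still match the path sums of \Cref{prop:8}. If a $\beta$-free family fails near the bottom boundary, we will instead try to show nondivisibility by $\alpha$ using $\delta$-steps.
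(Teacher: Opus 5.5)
Your upper bound is correct, and it takes a different, more elementary route than the paper. You split the determinant along empty rows and then clear one denominator per nonempty row, which gives polynomiality of $(\alpha\beta-\mathbf 1_{(\lambda_n,\mu_n)=(1,0)}\gamma\delta)(\alpha\beta)^{m-1}N^Z_{\lambda,\mu}$ directly. The paper instead starts from $(-1)^{|\lambda|-|\mu|}(\alpha\beta)^{n-m}A_{\lambda,\mu}=\widetilde N^Z_{\lambda,\mu}+T$, which comes from \Cref{lem:8}. It then observes that the path of every empty row is forced to be vertical with weight $\alpha\beta$, so the right-hand side is divisible by $(\alpha\beta)^{n-m}$. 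One small slip: inside a run, the entries equal to $1$ sit off the diagonal (when $r_k=s_j$ with $j<k$), not on it. This is harmless. Your treatment of the factor $\alpha\beta-\gamma\delta$ is the paper's argument via \Cref{lem:7}.

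The gap is the factor $\alpha\beta$. This is the real content of minimality, and you leave it open as ``the main obstacle.'' Without an explicit non-intersecting family you have no monomial witnessing $\alpha\beta\nmid$ numerator. You also lack $N^Z_{\lambda,\mu}\neq 0$, which your $\alpha\beta-\gamma\delta$ step silently needs.

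Your recipe also breaks as stated. Take a row with $s_k\ge 1$ and $r_k=s_k+1$. The only available $\gamma$-step lands at $(s_k+1,s_k+1)$, which is on $y=x$ in the target column, so the next step is forced to be the $\beta$ south step. Such rows must instead go straight down to $(s_k,s_k)$, paying an $\alpha$, and leave by an east step or a $\delta$-step. Non-intersection then has to be rechecked with these modified paths.

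The paper avoids this by making the witness $\alpha$-free rather than $\beta$-free:
\begin{itemize}
\item for a nonempty row with $s_i\ge1$, take $S^{s_i-1}DE^{r_i-s_i-1}S^{s_i+1}$, of weight $\gamma$ or $\beta\gamma$;
\item for $s_i=0$, take $DE^{r_i-1}$ or $DS$, of weight $\gamma$;
\item for each empty row, take the forced vertical path, of weight $\alpha\beta$.
\end{itemize}
These paths are pairwise disjoint. Before reaching column $r_i$, path $i$ stays at heights $\ge s_i+1$. Every lower path stays at heights $\le s_i$ in columns $\ge s_i$ and never reaches column $r_i$. Each empty-row column $s_j=r_j$ lies outside $[s_i,r_i]$ for every nonempty row $i$.

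So $\widetilde N^Z_{\lambda,\mu}$ contains a monomial of $\alpha$-degree exactly $n-m$. Because $T$ has nonnegative coefficients, the same holds for $\widetilde N^Z_{\lambda,\mu}+T$, which gives $\alpha\nmid A_{\lambda,\mu}$ and also nonvanishing. Supply this family, or finish your $\beta$-free one, and the proof is complete. For this step you do not need the run factorization: the full identity from \Cref{lem:8} suffices.
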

\begin{proof}
Write \( N^Z_{\lambda,\mu}=N^Z_{\lambda, \mu}(1; \alpha, \beta, \gamma, \delta; 0) \) and let
\[
    A_{\lambda,\mu} =
    (\alpha\beta - {\vec 1}_{(\lambda_n,\mu_n)=(1,0)}\gamma\delta)
    (\alpha\beta)^{m-1} N^Z_{\lambda,\mu}.
\]
Then by~\Cref{lem:8} and \Cref{thm:1}, we have 
\begin{equation}\label{eq:22}
  (-1)^{|\lambda|-|\mu|}(\alpha\beta)^{n-m}A_{\lambda,\mu} = \widetilde N^Z_{\lambda,\mu} + T
\end{equation}
where \( T=0 \) or \( T \) is a polynomial with nonnegative coefficients.
Hence, \( (-1)^{|\lambda|-|\mu|}(\alpha\beta)^{n-m}A_{\lambda,\mu} \) is a numerator of
\( N^Z_{\lambda, \mu} \).

We first show that \( A_{\lambda,\mu} \) is a polynomial using \eqref{eq:22}.
Let \(r_i=\lambda_i+n-i\) and \( s_i=\mu_i+n-i \) for \( 1\le i \le n \).
By \Cref{thm:1}, \( \widetilde N^Z_{\lambda,\mu} \)
is the generating function for non-intersecting paths
\( (\pi_1,\ldots,\pi_n) \) with \( \pi_i\in P(u_{s_i}\to v_{r_i}) \).
 For each \( i \) with \(\lambda_i=\mu_i\),
since \(r_i=s_i\), the path from \(u_{s_i}\) to \(v_{r_i}\) is unique
and has weight \(\alpha\beta\); see \Cref{fig:HJlattice}. Since
\(\lambda / \mu \) has \( n-m \) empty rows, every family of non-intersecting paths contributing to
\( \widetilde N^Z_{\lambda,\mu} \)
has weight divisible by \((\alpha\beta)^{n-m}\), so
\( \widetilde N^Z_{\lambda,\mu} \) is divisible by \((\alpha\beta)^{n-m}\).
If \( T\ne0 \), then \( T=\gamma\delta\,\widetilde
N^Z_{(\lambda_1+1,\ldots,\lambda_{n-1}+1),(\mu_1+1,\ldots,\mu_{n-2}+1,\mu_n)} \)
by \Cref{lem:8}, and the \( n-m \) empty rows of \( \lambda/\mu \) (which
all have index at most \( n-2 \) in this case) remain empty rows of
the shifted skew shape; hence \( T \) is also divisible by
\((\alpha\beta)^{n-m}\).
By \eqref{eq:22}, 
it follows that \(A_{\lambda,\mu}\) is a polynomial, hence a numerator of \( N^Z_{\lambda, \mu} \).

It remains to prove that \( A_{\lambda,\mu} \) is the minimal
numerator of \( N^Z_{\lambda, \mu} \). By the definition of \( A_{\lambda,\mu} \), it suffices to show that neither \(\alpha\beta\) nor
\(\alpha\beta-\gamma\delta\) divides \(A_{\lambda,\mu}\), since the possible factors of the minimal denominator are \( \alpha\beta \) and \( \alpha\beta-\gamma\delta \).

If \(m=0\), then \(\lambda=\mu\), the displayed denominator is \(1\),
and \(N^Z_{\lambda,\lambda}=1\), so the result follows immediately.
Thus we may assume \(m>0\). Then there exists an index \(i\) with
\(\lambda_i>\mu_i\).
For each \( i \) with \( \lambda_i>\mu_i \), we have \(r_i>s_i\).
If \(s_i=0\), choose the path \( DE^{r_i-1} \) from \(u_0\) to \(v_{r_i}\) when \(r_i\ge 2\), and the path \(DS\) from \(u_0\) to \(v_1\) when \(r_i=1\); in either case the weight is \( \gamma \).
If \(s_i\ge1\), choose the path
\(S^{s_i-1}DE^{r_i-s_i-1}S^{s_i+1} \)
from \(u_{s_i}\) to \(v_{r_i}\). Its weight is either \(\gamma\) or
\(\beta\gamma\), and in particular is not divisible by \(\alpha\).
These choices form a
family of non-intersecting paths. Therefore,
\( \widetilde{N}^Z_{\lambda, \mu} \)
has a monomial that is not divisible by \(\alpha^{n-m+1}\).
By \eqref{eq:22}, \( A_{\lambda,\mu}\) is not divisible by \( \alpha\beta \).

By \eqref{eq:22},
\( (-1)^{|\lambda|-|\mu|}(\alpha\beta)^{n-m}A_{\lambda,\mu} \) is a
nonzero polynomial with nonnegative coefficients, hence, by
\Cref{lem:7}, it is not divisible by \(\alpha\beta-\gamma\delta\).
Thus, \(\alpha\beta-\gamma\delta\) does not divide
\( A_{\lambda,\mu} \). This completes the proof.
\end{proof}

Now, we obtain the minimal denominator of
\( M^Z_{\lambda,\mu}(1;\alpha,\beta,\gamma,\delta;0) \).

\begin{prop}\label{prop:minimal_q0_M}
  For \( \lambda,\mu\in\Par_n \) with \( \mu\subseteq\lambda \), let \( k \) denote the number of indices \( i \) with \( \lambda'_i>\mu'_i \). Then the minimal denominator of \( M^Z_{\lambda,\mu}(1;\alpha,\beta,\gamma,\delta;0) \) is
  \[
    (\alpha\beta - {\vec 1}_{(\lambda'_1,\mu'_1)=(n,n-1)}\gamma\delta)(\alpha\beta)^{k-1}.
  \]
\end{prop}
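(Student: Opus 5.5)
The plan is to deduce this from \Cref{prop:minimal_q0xi1} via the duality of \Cref{lem:10}, exactly as \Cref{cor:4} was deduced from \Cref{pro:1}.

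First choose \(m\) with \(\lambda_1,\mu_1\le m\) and set \(\widetilde\lambda_j = n-\lambda'_{m+1-j}\), \(\widetilde\mu_j = n-\mu'_{m+1-j}\). Then \Cref{lem:10} gives
\[
M^Z_{\lambda,\mu} = (-1)^{|\lambda|-|\mu|}N^Z_{\widetilde\mu,\widetilde\lambda}.
\]
Since a sign does not change the minimal denominator, it suffices to compute the minimal denominator of \(N^Z_{\widetilde\mu,\widetilde\lambda}\). Because \(\mu\subseteq\lambda\) implies \(\mu'_j\le\lambda'_j\), we have \(\widetilde\lambda\subseteq\widetilde\mu\) in \(\Par_m\), so \Cref{prop:minimal_q0xi1} applies to the pair \((\widetilde\mu,\widetilde\lambda)\).

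Next, translate the two ingredients of \Cref{prop:minimal_q0xi1}.
\begin{enumerate}
\item The number of nonempty rows of \(\widetilde\mu/\widetilde\lambda\) is the number of \(j\) with \(\widetilde\mu_j>\widetilde\lambda_j\). This condition is equivalent to \(\lambda'_{m+1-j}>\mu'_{m+1-j}\), and as \(j\) ranges over \(1,\dots,m\) the index \(m+1-j\) ranges over all columns \(1,\dots,m\). Columns beyond \(m\) are empty in both \(\lambda\) and \(\mu\), so this count equals \(k\).
\item The indicator condition \((\widetilde\mu_m,\widetilde\lambda_m)=(1,0)\) is equivalent to \((\lambda'_1,\mu'_1)=(n,n-1)\), as already noted in the proof of \Cref{cor:4}.
\end{enumerate}
Substituting both into \Cref{prop:minimal_q0xi1} yields exactly the displayed denominator.

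There is a degenerate case \(k=0\), i.e.\ \(\lambda=\mu\). Then \(M^Z_{\lambda,\lambda}=1\), and the formula should read \(1\). The expression \((\alpha\beta)^{-1}\) would appear formally, but \Cref{prop:minimal_q0xi1} treats \(m=0\) as giving denominator \(1\), and the indicator is \(0\) in this case. I will handle this separately in the same way.

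The main point to check is that the minimal denominator is well defined up to sign. I also need to check that the convention in \Cref{prop:minimal_q0xi1} (minimal over products of \(\alpha\beta-q^i\gamma\delta\) at \(q=0\), i.e.\ powers of \(\alpha\beta\) and the factor \(\alpha\beta-\gamma\delta\)) is the same notion applied to \(M\) and \(N\). This holds because the definitions coincide. No new positivity argument is needed.
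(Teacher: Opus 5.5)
Your proposal is correct and follows essentially the same route as the paper: apply the duality of \Cref{lem:10} (the paper takes \(m=\lambda_1\)), translate the number of nonempty rows of \(\widetilde\mu/\widetilde\lambda\) into \(k\) and the indicator condition into \((\lambda'_1,\mu'_1)=(n,n-1)\), and then invoke \Cref{prop:minimal_q0xi1}. Your explicit checks that \(\widetilde\lambda\subseteq\widetilde\mu\) and that the degenerate case \(k=0\) gives denominator \(1\) are small points the paper leaves implicit.
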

\begin{proof}
  Let \( m=\lambda_1 \), and let
  \( \widetilde\lambda,\widetilde\mu\in\Par_m \) be the partitions
  given by \( \widetilde\lambda_j = n - \lambda'_{m+1-j} \) and
  \( \widetilde\mu_j = n - \mu'_{m+1-j} \). By \Cref{lem:10},
  \[
    M^Z_{\lambda,\mu}(1;\alpha,\beta,\gamma,\delta;0)
    = (-1)^{|\lambda|-|\mu|}\,N^Z_{\widetilde\mu,\widetilde\lambda}(1;\alpha,\beta,\gamma,\delta;0),
  \]
  so the minimal denominator of
  \( M^Z_{\lambda,\mu}(1;\alpha,\beta,\gamma,\delta;0) \) equals that
  of
  \(
  N^Z_{\widetilde\mu,\widetilde\lambda}(1;\alpha,\beta,\gamma,\delta;0)
  \). Since
  \( \widetilde\mu_j-\widetilde\lambda_j =
  \lambda'_{m+1-j}-\mu'_{m+1-j} \), the number of nonempty rows of
  \( \widetilde\mu/\widetilde\lambda \) equals \( k \). Since
  \( \widetilde\mu_m=n-\mu'_1 \) and
  \( \widetilde\lambda_m=n-\lambda'_1 \), we have
  \( (\widetilde\mu_m,\widetilde\lambda_m)=(1,0) \) if and only if
  \( (\lambda'_1,\mu'_1)=(n,n-1) \). Hence, the result follows from
  \Cref{prop:minimal_q0xi1}.
\end{proof}

Lastly, for \(  (\xi, q) = (1, 1) \), the following shows the minimality of the denominator in \Cref{prop:Z_multi}.
\begin{prop}\label{pro:minimalq1}
  For \( \lambda,\mu\in \Par_n \) with \( \mu\subseteq\lambda \), the
  minimal denominator of
  \( M^{Z}_{\lambda,\mu}(1;\alpha,\beta,\gamma,\delta;1) \) is
  \( (\alpha\beta-\gamma\delta)^{|\lambda|-|\mu|} \).
\end{prop}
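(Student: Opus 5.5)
By \Cref{prop:Z_multi}, the power $(\alpha\beta-\gamma\delta)^{|\lambda|-|\mu|}$ is already a denominator of $M^{Z}_{\lambda,\mu}(1;\alpha,\beta,\gamma,\delta;1)$, with numerator $\widetilde{M}^{Z}_{\lambda,\mu}$. So only minimality needs proof. Under $\varphi:(\xi,q)\mapsto(1,1)$ every factor $\alpha\beta-q^i\gamma\delta$ specializes to $\alpha\beta-\gamma\delta$. Hence any candidate denominator $\varphi(D)$ is a power $(\alpha\beta-\gamma\delta)^e$, and it suffices to show that $\alpha\beta-\gamma\delta$ does not divide $\widetilde{M}^{Z}_{\lambda,\mu}$.

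If $\lambda=\mu$, the claimed denominator is $1$ and there is nothing to prove. Otherwise, suppose for contradiction that $p:=\widetilde{M}^{Z}_{\lambda,\mu}/(\alpha\beta-\gamma\delta)$ is a polynomial. The plan is to apply \Cref{lem:7} with $k=0$ and $\varphi$ as above, using $\varphi(\alpha\beta)\neq0$ and $\varphi(\gamma\delta)\ne0$. For this, $p$ must be nonzero, which reduces to showing $\widetilde{M}^{Z}_{\lambda,\mu}\neq 0$. This is the one point needing care.

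For nonvanishing I use \Cref{cor:1}. First, $|\LHT_{n,1}(\lambda/\mu)|\ge1$, since the all-zero filling satisfies the inequalities: the ratio $T/(n+c)$ is $0$ in every cell, and $n+c(i,j)>0$ for cells of $\lambda\in\Par_n$ because $i\le n$. The strict column inequality $0>0$ fails, however, so instead I will use the known nonvanishing of $\det\bigl(\binom{\lambda_i+n-i}{\mu_j+n-j}\bigr)$ for $\mu\subseteq\lambda$. This determinant is positive by Lindström--Gessel--Viennot, as it counts nonintersecting lattice paths, and the canonical family exists when $\mu\subseteq\lambda$. Second, each factor $\alpha+\beta+\gamma+\delta+(c(z)+n-1)(\alpha+\gamma)(\beta+\delta)$ is a nonzero polynomial, since $c(z)+n-1\ge0$ and the linear term is present. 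So $\widetilde{M}^{Z}_{\lambda,\mu}$ is a nonzero polynomial with nonnegative coefficients.

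\Cref{lem:7} then says $(\alpha\beta-\gamma\delta)p=\widetilde{M}^{Z}_{\lambda,\mu}$ has a negative coefficient, contradicting \Cref{cor:1}. One could also argue directly by evaluating at $\alpha=\beta=\gamma=\delta=1$, which gives a positive value rather than $0$. Therefore no factor can be removed, and the minimal denominator is $(\alpha\beta-\gamma\delta)^{|\lambda|-|\mu|}$.
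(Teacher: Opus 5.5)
Your proof is correct and follows the paper's route: \Cref{prop:Z_multi} supplies the denominator \((\alpha\beta-\gamma\delta)^{|\lambda|-|\mu|}\) with numerator \(\widetilde{M}^{Z}_{\lambda,\mu}\), and \Cref{lem:7} with \(k=0\), together with the nonnegativity in \Cref{cor:1}, shows that \(\widetilde{M}^{Z}_{\lambda,\mu}\) is not divisible by \(\alpha\beta-\gamma\delta\). Your check that \(\widetilde{M}^{Z}_{\lambda,\mu}\neq0\) supplies a hypothesis of \Cref{lem:7} that the paper leaves implicit; you should delete the retracted all-zero-filling sentence, since it fails whenever \(\lambda/\mu\) has two cells in one column, and you could replace the LGV argument by the filling \(T(i,j)=n-i\), which is valid and gives \(|\LHT_{n,1}(\lambda/\mu)|\ge1\) directly.
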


\begin{proof}
  By \Cref{cor:1},
  \( (\alpha\beta-\gamma\delta)^{|\lambda| - |\mu|} \) is a denominator
  of \( M^{Z}_{\lambda,\mu}(1;\alpha,\beta,\gamma,\delta;1) \). By
  \Cref{prop:Z_multi},
  \( (\alpha\beta-\gamma\delta)^{|\lambda| - |\mu|}
  {M}^{Z}_{\lambda,\mu}(1;\alpha,\beta,\gamma,\delta;1) \) is a
  polynomial with nonnegative coefficients. Hence, by \Cref{lem:7},
  \( \widetilde{M}^{Z}_{\lambda,\mu} \) is not divisible by
  \( (\alpha\beta-\gamma\delta) \). Therefore,
  \( (\alpha\beta-\gamma\delta)^{|\lambda| - |\mu|} \) is the minimal
  denominator of
  \( {M}^{Z}_{\lambda,\mu}(1;\alpha,\beta,\gamma,\delta;1) \).
\end{proof}

\begin{remark}
  For \( q=1 \) with an arbitrary \( \xi \), by \Cref{lem:3}, we
  obtain that \( (\alpha\beta-\gamma\delta)^{|\lambda| - |\mu|} \) is
  a denominator of
  \( {M}^{Z}_{\lambda,\mu}(\xi;\alpha,\beta,\gamma,\delta;1) \). By
  the same argument in the proof of \Cref{pro:minimalq1}, one can show
  that it is the minimal denominator.
\end{remark}

\bibliographystyle{abbrv}

\end{document}